\documentclass[11pt]{amsart}

\addtolength{\textwidth}{2.4cm}
\addtolength{\evensidemargin}{-1.2cm}
\addtolength{\oddsidemargin}{-1.2cm}
\addtolength{\textheight}{.6cm}
\addtolength{\topmargin}{-.3cm}

\usepackage{amsmath,amssymb,amsthm}
\usepackage[pdftex]{color,graphicx}
\usepackage[utf8]{inputenc} 
\usepackage{graphics}
\usepackage{enumerate}
\usepackage{array}
\usepackage[english]{babel}
\usepackage{verbatim}
\usepackage{hyperref}
\usepackage{caption}
\usepackage{tikz-cd}

\usepackage{pdflscape}
\usepackage{rotating}
\usepackage{graphicx}


\usepackage{amsmath}
\usepackage{amsfonts}
\usepackage{amssymb}
\usepackage{amsthm}
\usepackage{mathrsfs}
\usepackage{mathtext}
\usepackage{mathtools}

\usepackage[all,knot]{xy}

\newcommand{\K}{\mathbb{K}}

\newcommand{\ot}{\otimes}

\DeclareMathOperator{\HH}{HH}
\DeclareMathOperator{\HHom}{Hom}


\numberwithin{equation}{section}


\newtheorem{defi}[equation]{Definition}
\newtheorem{rema}[equation]{Remark}
\newtheorem{theo}[equation]{Theorem}
\newtheorem*{theo*}{Theorem}
\newtheorem{prop}[equation]{Proposition}
\newtheorem{coro}[equation]{Corollary}

\newtheorem{lemma}[equation]{Lemma}
\newtheorem{exam}[equation]{Example}

\newtheorem{quest}[equation]{Question}

\title[Bracket structure on Hochschild cohomology of Koszul quiver algebras]{Bracket structure on Hochschild cohomology of Koszul quiver algebras using homotopy liftings}

\author[T.\ Oke]{Tolulope Oke}
\address{Department of Mathematics, Texas A\&M University, 
College Station, Texas 77843, USA}
\email{tolu\_oke@tamu.edu}

\date{\today}



\begin{document}

\maketitle
\thispagestyle{empty}

\begin{abstract}
We present the Gerstenhaber algebra structure on the Hochschild cohomology of Koszul algebras defined by quivers and relations using the idea of homotopy liftings. E.L. Green, G. Hartman, E.N. Marcos and \O. Solberg provided a canonical way of constructing a minimal projective bimodule resolution of a Koszul quiver algebra. The resolution has a comultiplicative structure which we use to define homotopy lifting maps. We first present a short example that demonstrates the theory. We then study the Gerstenhaber algebra structure on Hochschild cohomology of a family of bound quiver algebras, some members of which are counterexamples to the Snashall-Solberg finite generation conjecture. We give examples of homotopy lifting maps for degree $2$ and degree $1$ cocycles and draw connections to derivation operators. As an application, we describe Hochschild 2-cocycles satisfying the Maurer-Cartan equation.

\end{abstract}

\let\thefootnote\relax\footnote{{\em Key words and phrases: } Hochschild cohomology, Gerstenhaber brackets, Koszul algebras, quiver algebras, homotopy liftings, Maurer-Cartan Equation\\
MSC: 16E40; 16S37; 16W50.}
\let\thefootnote\relax\footnote{Partially supported by NSF grant 1665286 and 2001163 during the summer semester of 2020 and Spring 2021.}
 

\section{Introduction}\label{introduction}
\qquad The Hochschild cohomology $\HH^*(\Lambda)$ of an associative algebra $\Lambda$ possesses a multiplicative map making it a graded commutative ring. This map is known as the cup product. M. Gerstenhaber introduced a bracket on $\HH^*(\Lambda)$ giving it a second multiplicative structure. This bracket, now known as the Gerstenhaber bracket, together with the cup product makes $\HH^*(\Lambda)$ into a Gerstenhaber algebra. The bracket plays an important role in the theory of deformation of algebras.

The bracket structure introduced by M. Gerstenhaber was defined using the bar resolution. This definition is useful theoretically but not easily accessible for computational purposes. For instance to compute the bracket on $\HH^*(\Lambda)$ using an arbitrary resolution, appropriate chain maps between the resolution and the bar resolution would have to be constructed. Morphisms defined on the resolution would then have to be carried over to the bar resolution and vice versa. This process is not always easy. 

Several works have been carried out in interpreting the original definition of the bracket given by M. Gerstenhaber using arbitrary resolutions. For example, in 2004, B. Keller realized Hochschild cohomology as the Lie algebra of the derived Picard group. M. Su\'{a}rez-\'{A}lvarez showed in \cite{MSA} that the bracket of a degree 1 cocycle and any cocycle can be realized from derivation operators associated to the degree 1 cocycles expressed on arbitrary projective resolutions. In~\cite{NW}, C. Negron and S. Witherspoon introduced the idea  of a contracting homotopy for resolutions that are differential graded coalgebras. In~\cite{VOL}, Y. Volkov generalized the method introduced by C. Negron and S. Witherspoon to arbitrary resolutions. He introduced the idea of homotopy lifting by defining the bracket in terms of homotopy lifting maps on any projective bimodule resolution.

In this work, we present the Gerstenhaber algebra structure on Hochschild cohomology of Koszul algebras defined by quivers and relations using homotopy lifting technique introduced by Y. Volkov in \cite{VOL}. We construct explicit examples of  homotopy lifting maps for some cocycles. Our cocycles and homotopy lifting maps are defined using the resolution $\K$ introduced in~\cite{ROKA} by E. L. Green, G. Hartman, E. N. Marcos and \O. Solberg. We discuss the construction of $\K$ in details in the Preliminaries section. There are two necessary and sufficient conditions that make a map a homotopy lifting map. For the resolution $\K$, the second condition is satisfied because $\K$ is a differential graded coalgebra and more generally because the algebra $\Lambda$ is Koszul. 

One of the examples we use to illustrate the theory is a family of quiver algebras. Some members of this family are counterexamples to the Snashall-Solberg finite generation conjecture. The Snashall-Solberg finite generation conjecture asserts that for finite dimensional algebras, Hochschild cohomology modulo nilpotents is finitely generated. R. Hermann asked whether or not Hochschild cohomology modulo the weak Gerstenhaber ideal generated by nilpotent elements is finitely generated. The theory provided in this work gives the necessary tools neeeded to address these questions.

Furthermore, there is an algorithmic approach to computing the resolution $\K$, for instance  in \cite{AAR}. This means that perhaps, it might be possible to employ our method to develop algorithms that can compute the bracket on Hochschild cohomology for Koszul algebras defined by quivers and relations.

Through out this article, $k$ is taken to be a field and $\otimes = \otimes_k$ unless otherwise specified. $\Lambda=kQ/I$ is taken to be a graded Koszul quiver algebra, where $Q$ is a finite quiver and $I$ is an admissible ideal of the quiver algebra $kQ$. For $R=kQ$, and suppose $\Lambda = \oplus_{i\geq 0}\Lambda_i$ is a grading of $\Lambda$, it was shown in \cite{ROKA} that there are integers $t_n$  and uniform elements $\{f^n_i\}_{i=0}^{t_n}\in R$ such that the minimal projective resolution $\mathbb{L}\rightarrow\Lambda_0$ can be given in terms of filtration of right ideals involving the $f^n_i$. It was shown that the elements $\{f^n_i\}_{i=0}^{t_n}$ satisfy a ``comultiplicative structure", i.e. there are scalars $c_{pq}(n,i,r)$ such that $f^n_i = \sum_{p=0}^{t_r}\sum_{q=0}^{t_{n-r}}c_{pq}(n,i,r) f^r_p f^{n-r}_q.$ From this comultiplicative equation, one constructs $\K_{\bullet}\rightarrow\Lambda,$ a minimal projective $\Lambda^e$-resolution of $\Lambda$, where for each $n$, the $\Lambda^e$-module $\K_n$ has free basis elements $\{\varepsilon^n_i\}_{i=0}^{t_n}$~\cite{ROKA}. The main results of this work are the following:
\begin{enumerate}
\item[(i).] Theorems~\ref{homotopylifting00}, \ref{homotopylifting1}, and \ref{homotopylifting2} which presents definitions of homotopy lifting maps for cocycles taking free basis elements $\varepsilon^m_r$ to an idempotent, a path of length 1 and a path of length 2 respectively.
\item[(ii).] Theorem \ref{Gerstenbrack} providing a combinatorial description of the Gerstenhaber bracket of any two Hochschild cochains and Theorem \ref{Gerstenbrack1} which gives a general Gerstenhaber algebra structure on Hochschild cohomology of Koszul quiver algebras.
\item[(iii).] Theorem \ref{Maurer-cartan-theorem} specifying Hochschild 2-cocycles that satisfy the Maurer-Cartan equation.
\item[(iv).] Proposition \ref{homotopylift_doperators} showing examples of homotopy lifting maps for degree $1$-cocycles that are also derivation operators when regarded as $k$-linear maps. We raised Question \ref{question_homo-doperator} which asks whether the observation in Proposition \ref{homotopylift_doperators} is generally true.
\end{enumerate}
The results of item (i) above assume certain conditions on the scalars $c_{pq}(n,i,r)$ appearing in the multiplicative equation as well as some scalars $b_{m,r}(m-n+1,s)$ coming from homotopy lifting maps. We present these conditions in Equation~\eqref{main-result}. This article is organized in the following way:

\tableofcontents

\section{Preliminaries}\label{prelim}
The Hochschild cohomology of an associative $k$-algebra $\Lambda$ was originally defined using the following projective resolution known as the bar resolution.
\begin{equation}\label{bar}
\mathbb{B}_{\bullet}:  \qquad\cdots \rightarrow \Lambda^{\ot (n+2)}\xrightarrow{\;\delta_n\; }\Lambda^{\ot (n+1)}\xrightarrow{\delta_{n-1}} \cdots \xrightarrow{\;\delta_2\;}\Lambda^{\ot 3}\xrightarrow{\;\delta_1\;}\Lambda^{\ot 2}\;(\;\xrightarrow{\mu} \Lambda)
\end{equation}
where $\mu$ is multiplication and the differentials $\delta_n$ are given by
\begin{equation}\label{bar-diff}
\delta_n(a_0\otimes a_1\otimes\cdots\otimes a_{n+1}) = \sum_{i=0}^n (-1)^i a_0\otimes\cdots\otimes a_ia_{i+1}\otimes\cdots\otimes a_{n+1}
\end{equation}
for all $a_0, a_1,\ldots, a_{n+1}\in \Lambda$. This resolution consists of $\Lambda$-bimodules or left modules over the enveloping algebra $\Lambda^e = \Lambda\ot\Lambda^{op}$, where $\Lambda^{op}$ is the opposite algebra. The resolution is sometimes written $\mathbb{B}_{\bullet}\xrightarrow{\mu}\Lambda$ with $\mu$ referred to as the augmentation map. Let $M$ be a finitely generated left $\Lambda^e$-module, the space of Hochschild $n$-cochains with coefficients in $M$ is obtained by applying the functor $\HHom_{\Lambda^e}(-,M)$ to the complex $\mathbb{B}_{\bullet}$, and then taking the cohomology of the resulting cochain complex. We define 
$$ \HH^*(\Lambda,M) := \bigoplus_{n\geq 0} \HH^n(\Lambda,M)=\bigoplus_{n\geq 0} \text{H}^n(\HHom_{\Lambda^e}(\mathbb{B}_{n},M)) $$
to be the Hochschild cohomology ring of $\Lambda$ with coefficients in $M$. Letting $M=\Lambda$, we define $\HH^*(\Lambda) :=\HH^*(\Lambda,\Lambda) $ to be the Hochschild cohomology of $\Lambda$. An element $\chi\in\HHom_{\Lambda^e}(\mathbb{B}_m,\Lambda)$ is a cocycle if $(\delta^{*}(\chi))(\cdot) := \chi\delta(\cdot)=  0.$ There is an isomorphism of the abelian groups $\HHom_{\Lambda^e}(\mathbb{B}_m,\Lambda)\cong\HHom_{k}(\Lambda^{\otimes m},\Lambda),$ so we can also view $\chi$ as an element of $\HHom_{k}(\Lambda^{\otimes m},\Lambda)$.

The Gerstenhaber bracket of two cocycles $\chi\in\HHom_{k}(\Lambda^{\otimes m},\Lambda)$ and $\theta\in \HHom_{k}(\Lambda^{\otimes n},\Lambda)$ at the chain level is given by
\begin{equation}\label{gbrac}
[\chi,\theta] = \chi\circ \theta - (-1)^{(m-1)(n-1)} \theta\circ \chi
\end{equation}
where $\chi\circ \theta = \sum_{j=1}^m (-1)^{(n-1)(j-1)}\chi\circ_j \theta$ with
\begin{multline*}
(\chi\circ_j \theta)(a_1\otimes \cdots \otimes a_{m+n-1}) \\
= \chi(a_1\otimes\cdots\otimes a_{j-1}\otimes \theta(a_j\otimes\cdots\otimes a_{j+n-1}) \otimes a_{j+n}\otimes \cdots\otimes a_{m+n-1}). \end{multline*}
This induces a well defined map  $[\cdot\;,\cdot] : \HH^{m}(\Lambda) \times \HH^{n}(\Lambda) \rightarrow \HH^{m+n-1}(\Lambda)$ on cohomology.

\textbf{Quiver algebras:} A quiver is a directed graph with the allowance of loops and multiple arrows. A quiver $Q$ is sometimes denoted as a quadruple $(Q_0,Q_1,o,t)$ where $Q_0$ is the set of vertices in $Q$, $Q_1$ is the set of arrows in $Q$, and $o,t: Q_1 \longrightarrow Q_0$ are maps which assign to each arrow $a\in Q_1$, its origin vertex $o(a)$ and terminal vertex $t(a)$ in $Q_0$. A path in $Q$ is a sequence of arrows $a = a_1 a_2 \cdots a_{n-1} a_n $ such that the terminal vertex of $a_{i}$ is the same as the origin vertex of $a_{i+1}$, using the convention of concatenating paths from left to right. The quiver algebra or path algebra $kQ$ is defined as a vector space having all paths in $Q$ as a basis. Vertices are regarded as paths of length $0$, an arrow is a path of length $1$, and so on. We take multiplication on $kQ$ as concatenation of paths. Two paths $a$ and $b$ satisfy $ab=0$ if $t(a)\neq o(b)$. This multiplication defines an associative algebra over $k$. 
By taking $kQ_i$ to be the $k$-vector subspace of $kQ$ with paths of length $i$ as basis, $kQ = \bigoplus_{i\geq 0} kQ_i$ can be viewed as an $\mathbb{N}$-graded vector space. A relation on a quiver $Q$ is a linear combination of paths from $Q$ having the same origin vertex and terminal vertex. A quiver together with a set of relations is called a quiver with relations. Letting $I$ be an ideal of the path algebra $kQ$, we denote by $(Q,I)$ the quiver $Q$ with relations $I$. The quotient $\Lambda  = kQ/I$ is called the quiver algebra associated with $(Q,I)$. Let $\Lambda = \bigoplus_{i\geq 0}\Lambda_i$ be a grading on $\Lambda$. If $\Lambda$ is Koszul and $\Lambda_0$ is isomorphic to $k$ or copies of $k$, $\Lambda_0$ has a linear graded projective resolution $\mathbb{L}$ as a right $\Lambda$-module \cite{HCA,MV}.

An algorithmic approach to finding such a minimal projective resolution $\mathbb{L}$ of $\Lambda_0$ was given in~\cite{AAR}. The modules $\mathbb{L}_{n}$ are right $\Lambda$-modules for each $n$. There is a ``comultiplicative structure" on $\mathbb{L}$ and this structure was used to find a minimal projective resolution $\K\rightarrow\Lambda$ of modules over the enveloping algebra of $\Lambda$ in~\cite{ROKA}. A non-zero element $x\in kQ$ is called uniform if it is a linear combination of paths each having the same origin vertex and the same terminal vertex: In other words, $x=\sum_{j} c_j w_j$ with scalars $c_j\neq 0$ for all $j$ and each path $w_j$ are of equal length having the same origin vertex and the same terminal vertex. For $R=kQ$, it was shown in \cite{AAR} that there are integers $t_n$  and uniform elements $\{f^n_i\}_{i=0}^{t_n}$ such that the right projective resolution $\mathbb{L}\rightarrow\Lambda_0$ is obtained from a filtration of $R$. This filtration is given by the following nested family of right ideals: 
$$\cdots \subseteq \bigoplus_{i=0}^{t_n} f^n_iR \subseteq \bigoplus_{i=0}^{t_{n-1}} f^{n-1}_iR\subseteq \cdots \subseteq\bigoplus_{i=0}^{t_1} f^1_iR\subseteq \bigoplus_{i=0}^{t_0} f^0_iR = R$$
where for each $n$, $\mathbb{L}_n = \bigoplus_{i=0}^{t_n} f^n_iR / \bigoplus_{i=0}^{t_n} f^n_iI$ and the differentials on $\mathbb{L}$ are induced by the inclusions $\bigoplus_{i=0}^{t_n} f^n_iR\subseteq \bigoplus_{i=0}^{t_{n-1}} f^{n-1}_iR$. Furthermore, it was shown in \cite{AAR} that with some choice of scalars, the $\{f^n_i\}_{i=0}^{t_n}$ satisfying the comultiplicative equation of \eqref{comult-struc} make $\mathbb{L}$ minimal. In other words, for $0\leq i\leq t_n$, there are scalars $c_{pq}(n,i,r)$ such that
\begin{equation}\label{comult-struc}
f^n_i = \sum_{p=0}^{t_r}\sum_{q=0}^{t_{n-r}}c_{pq}(n,i,r) f^r_p f^{n-r}_q
\end{equation}
holds and $\mathbb{L}$ is a minimal resolution. To construct the above multiplicative equation for example, we can take $\{f^0_i\}_{i=0}^{t_0}$ to be the set of vertices, $\{f^1_i\}_{i=0}^{t_1}$ to be the set of arrows, $\{f^2_i\}_{i=0}^{t_2}$ to be the set of uniform relations generating the ideal $I$, and define $\{f^n_i\}_{i=0}^{t_n}(n\geq 3)$ recursively, that is in terms of $f^{n-1}_i$ and $f^1_j$. We present the comultiplicative structure of a family of quiver algebras in Section~\eqref{wexamples}. It was first presented in \cite{TNO} where we used the ideas in this work to show that for certain algebras from the family, the Hochschild cohomology ring modulo the weak Gerstenhaber ideal generated by homogeneous nilpotent elements is not finitely generated.

The resolution $\mathbb{L}$ and the comultiplicative structure~\eqref{comult-struc} were used to construct a minimal projective resolution $\K\rightarrow \Lambda$ of modules over the enveloping algebra $\Lambda^e=\Lambda\otimes \Lambda^{op}$ on which we now define Hochschild cohomology. This minimal projective resolution $\K$ of $\Lambda^e$-modules associated to $\Lambda$ was given in \cite{ROKA} and now restated with slight notational changes below.

\begin{theo}\cite[Theorem 2.1]{ROKA}\label{mainres}
Let $\Lambda=KQ/I$ be a Koszul algebra, and let $\{f^n_i\}_{i=0}^{t_n}$ define a minimal resolution of $\Lambda_0$ as a right $\Lambda$-module. A minimal projective resolution $(\K, d)$ of $\Lambda$ over $\Lambda^e$ is given by
\begin{equation*}
\mathbb{K}_n = \bigoplus_{i=0}^{t_n}\Lambda o(f_i^n)\otimes_k t(f_i^n)\Lambda
\end{equation*} 
for $n\geq 0$, where the differential $d_n:\K_n\xrightarrow{}\K_{n-1}$ applied to $\varepsilon^n_i = (0,\ldots,0,o(f^n_i)\otimes_k t(f^n_i),0,\ldots,0),$ $0\leq i\leq t_n$  where $o(f^n_i)\otimes_k t(f^n_i)$ is in the $i$-th position is given by
\begin{equation}\label{diff-k}
d_n(\varepsilon^n_i ) = \sum_{j=0}^{t_{n-1}}\Big( \sum_{p=0}^{t_1}c_{p,j}(n,i,1) f_p^1 \varepsilon^{n-1}_j 
+ (-1)^n \sum_{q=0}^{t_1}c_{j,q}(n,i,n-1)\varepsilon^{n-1}_j  f_q^1 \Big) 
\end{equation}
and $d_0:K_0\xrightarrow{}\Lambda$ is the multiplication map. In particular, $\Lambda$ is a linear module over $\Lambda^e$.
\end{theo}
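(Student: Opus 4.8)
The plan is to check, in turn, the four properties that make $(\K,d)$ a minimal projective resolution of $\Lambda$ over $\Lambda^e$: that each $\K_n$ is $\Lambda^e$-projective, that $(\K,d)$ is a complex, that it is a resolution (exact in positive degrees with $H_0=\Lambda$), and that it is minimal. Projectivity, the complex condition, and minimality are essentially formal consequences of the comultiplicative structure \eqref{comult-struc}; the substantive point is exactness, which I would deduce from the assumed exactness of the one-sided resolution $\mathbb{L}\to\Lambda_0$ by a base-change argument that relies on Koszulity.

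\emph{Projectivity and the complex condition.} For each $i$ the summand $\Lambda o(f^n_i)\otimes_k t(f^n_i)\Lambda$ equals $\Lambda e\otimes_k e'\Lambda$ for the vertex idempotents $e=o(f^n_i)$ and $e'=t(f^n_i)$, hence is a direct summand of $\Lambda\otimes_k\Lambda=\Lambda^e$; a finite direct sum of such is projective. For $d_{n-1}d_n=0$ I would substitute \eqref{diff-k} into itself and sort the resulting terms into four families according to whether each of the two arrows $f^1$ is applied on the left or on the right of $\varepsilon^{n-2}_l$. The two pure families ($f^1_p f^1_{p'}\varepsilon^{n-2}_l$ and $\varepsilon^{n-2}_l f^1_{q'} f^1_q$) reassemble, via the coassociativity of \eqref{comult-struc} with $r=2$, into the degree-two elements $f^2_s$ acting on the generator; since the $f^2_s$ generate $I$ they vanish in $\Lambda$, so these families are zero. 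The two mixed families ($f^1_p\varepsilon^{n-2}_l f^1_{q'}$) have coefficients that agree because peeling one arrow from the left and one from the right can be done in either order, while the signs $(-1)^{n-1}$ and $(-1)^{n}$ attached to them are opposite; hence they cancel. Verifying these coassociativity identities among the $c_{pq}(n,i,r)$ and tracking the signs is the one genuinely computational part of this step.

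\emph{Exactness.} This is the crux. Regard $\K$ as a complex of left $\Lambda$-modules through the left tensor factor; each $\K_n$ is then a direct sum of copies of the projectives $\Lambda o(f^n_i)$, hence flat. Applying $\Lambda_0\otimes_\Lambda(-)$ collapses the left factor to $k\,o(f^n_i)$ and kills the left-arrow part of \eqref{diff-k} (an arrow lands in $\Lambda_0\mathfrak{r}=0$), leaving exactly the right-module complex $\bigoplus_i t(f^n_i)\Lambda$ with differential $\varepsilon^n_i\mapsto(-1)^n\sum_{j,q}c_{jq}(n,i,n-1)\,\varepsilon^{n-1}_j f^1_q$. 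By \eqref{comult-struc} with $r=n-1$ this is precisely the differential of $\mathbb{L}$ induced by the inclusions $f^n_iR\subseteq f^{n-1}_jR$, so $\Lambda_0\otimes_\Lambda\K\cong\mathbb{L}$ up to signs and is a resolution of $\Lambda_0$ by hypothesis. To transport this back to $\K$, I would run the hyper-$\mathrm{Tor}$ spectral sequence
\[
E^2_{pq}=\mathrm{Tor}^\Lambda_p(\Lambda_0,H_q(\K))\ \Longrightarrow\ H_{p+q}(\Lambda_0\otimes_\Lambda\K)=H_{p+q}(\mathbb{L}),
\]
which is available because the $\K_n$ are flat as left modules. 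One checks directly that $H_0(\K)=\Lambda$ (the image of $d_1$ is generated by the degree-two relations and $d_0=\mu$), and then argues by induction on $n\ge 1$: assuming $H_q(\K)=0$ for $0<q<n$, the only term of total degree $n$ that can survive is $E^2_{0,n}=\Lambda_0\otimes_\Lambda H_n(\K)=H_n(\K)/\mathfrak{r}\,H_n(\K)$, and since no differential can enter or leave it, it is a subquotient of $H_n(\mathbb{L})=0$ and hence vanishes. As $H_n(\K)$ is a graded bimodule bounded below in internal degree, the graded Nakayama lemma gives $H_n(\K)=0$. I expect the careful handling of this spectral sequence — confirming the vanishing of the flanking $E^2$ terms and that flatness is used on the correct side — to be the main obstacle.

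\emph{Minimality and linearity.} Every term of $d_n$ in \eqref{diff-k} carries an arrow $f^1_p$ or $f^1_q$, so $d_n(\K_n)\subseteq\mathrm{rad}(\Lambda^e)\,\K_{n-1}$; equivalently the induced differential on $\K/\mathrm{rad}(\Lambda^e)\K$ is zero, so the $\varepsilon^n_i$ form a minimal generating set in each homological degree and the resolution is minimal. Because $\Lambda$ is Koszul each $f^n_i$ is uniform of length exactly $n$; placing $\varepsilon^n_i$ in internal degree $n$ makes every differential homogeneous of internal degree $0$, which is precisely the statement that the minimal $\Lambda^e$-resolution of $\Lambda$ is linear, i.e. that $\Lambda$ is a linear $\Lambda^e$-module. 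Together with the identification of $d_0$ with the multiplication map $\bigoplus_v\Lambda v\otimes_k v\Lambda\to\Lambda$, these four steps establish the theorem.
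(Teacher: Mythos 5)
The paper does not actually prove this statement: it is quoted verbatim (with notational changes) from \cite[Theorem 2.1]{ROKA}, so there is no in-paper proof to compare yours against. Judged on its own, your outline is sound and follows what is essentially the standard route (and, as far as I can tell, the route of \cite{ROKA} itself): projectivity of each $\Lambda o(f^n_i)\otimes_k t(f^n_i)\Lambda$ as a summand of $\Lambda^e$, the four-family cancellation for $d^2=0$, reduction of exactness to that of $\mathbb{L}$ via $\Lambda_0\otimes_\Lambda(-)$, and minimality/linearity from the fact that every term of $d_n$ carries an arrow and every $f^n_i$ is uniform of length $n$. I checked the two places you flag as delicate and both close up: in the hyperhomology spectral sequence the flanking terms $E^2_{p,0}=\mathrm{Tor}^\Lambda_p(\Lambda_0,H_0(\K))=\mathrm{Tor}^\Lambda_p(\Lambda_0,\Lambda)$ vanish for $p>0$ because $\Lambda$ is free over itself, and graded Nakayama applies to $H_n(\K)$ without finiteness hypotheses since it is bounded below in internal degree.

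Two caveats worth recording. First, the coassociativity identities among the $c_{pq}(n,i,r)$ that you invoke for both the pure and the mixed families in the $d^2=0$ step are not a formal consequence of \eqref{comult-struc} alone: the elements $f^1_p f^{n-2}_l f^1_q$ need not be linearly independent in $kQ$, so equality of the two expansions of $f^n_i$ does not immediately force equality of coefficients. One needs that the $\{f^n_i\}$ can be chosen to define minimal resolutions on both sides (this is where Koszulity enters again); in the literature this is \cite[Proposition 2.2]{MSKA} and part of the construction in \cite{ROKA}, and it should be cited or proved rather than absorbed into ``tracking the signs.'' Second, your parenthetical that the image of $d_1$ is ``generated by the degree-two relations'' is a slip: $d_1(\varepsilon^1_i)$ produces the elements $f^1_i\otimes 1-1\otimes f^1_i$, and the identification $H_0(\K)=\Lambda$ rests on the standard fact that these generate the kernel of the multiplication map $\Lambda\otimes_{\Lambda_0}\Lambda\to\Lambda$. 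Neither point derails the argument, but both are where the actual content lives.
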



We note that for each $n$ and $i$, $\{\varepsilon^n_i\}_{i=0}^{t_n}$ is a free basis of $\K_n$ as a $\Lambda^e$-module. The scalars $c_{p,j}(n,i,r)$ are those appearing in~\eqref{comult-struc} and $f^1_{*} := \overline{f^1_{*}}$ is the residue class of $f^1_{*}$ in  $\bigoplus_{i=0}^{t_1} f^1_iR / \bigoplus_{i=0}^{t_n} f^1_i I$. Using the comultiplicative structure of Equation~\eqref{comult-struc}, it was shown in~\cite{MSKA} that the cup product on the Hochschild cohomology ring of a Koszul quiver algebra has the following description.
\begin{theo*}[See~\cite{MSKA}, Theorem 2.3]\label{hochschildcup}
Let $\Lambda = kQ/I$  be a Koszul algebra over a field $k$,  where $Q$ is a finite quiver and $I\subseteq J^2$. Suppose that $\eta:\K_n\rightarrow\Lambda$ and $\theta:\K_m\rightarrow\Lambda$ represent elements in $\HH^*(\Lambda)$ and are given by $\eta(\varepsilon^n_i) = \lambda_i$ for $i=0,1,\dotsc,t_n$ and  $\theta(\varepsilon^m_i) = \lambda_i'$ for $i=0,1,\dotsc,t_m.$ Then $\quad \eta\smile \theta : \K_{n+m}\rightarrow\Lambda$ can be expressed as
\begin{equation*}
(\eta\smile \theta)(\varepsilon^{n+m}_j) = \sum_{p=0}^{t_n}\sum_{q=0}^{t_m}c_{pq}(n+m,i,n)\lambda_p\lambda_q' ,  
\end{equation*}
for $j= 0,1,2,\dotsc, t_{n+m}.$
\end{theo*}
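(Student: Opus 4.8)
The plan is to compute the cup product through a diagonal chain map on the resolution $\K$, transporting the classical definition from the bar resolution $\mathbb{B}_\bullet$ to $\K$. For cocycles the cup product may be written as $\eta\smile\theta = \mu_\Lambda\circ(\eta\otimes\theta)\circ\Delta$, where $\Delta\colon\K\to\K\otimes_\Lambda\K$ is a diagonal approximation lifting the canonical identification $\Lambda\otimes_\Lambda\Lambda\cong\Lambda$, and $\mu_\Lambda$ is the multiplication of $\Lambda$. The whole point is that the comultiplicative equation~\eqref{comult-struc} provides exactly the data needed to write down $\Delta$ explicitly on free generators.

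First I would construct the diagonal. Motivated by the factorization $f^{n+m}_j = \sum_{p,q} c_{pq}(n+m,j,n)\,f^n_p f^m_q$, I would define the $(n,m)$-component on free basis elements by
\[
\Delta_{n,m}(\varepsilon^{n+m}_j) = \sum_{p=0}^{t_n}\sum_{q=0}^{t_m} c_{pq}(n+m,j,n)\,\varepsilon^n_p\otimes\varepsilon^m_q,
\]
and set $\Delta|_{\K_N} = \bigoplus_{n+m=N}\Delta_{n,m}$. I would then verify that $\Delta$ is a chain map, i.e.\ that $(d_n\otimes 1 \pm 1\otimes d_m)\circ\Delta = \Delta\circ d_{n+m}$. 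Expanding the left side with the differential formula~\eqref{diff-k} and the right side by applying~\eqref{diff-k} to $\varepsilon^{n+m}_j$, the verification collapses to a compatibility (associativity) identity among the scalars $c_{pq}$: the coefficients obtained by first comultiplying and then applying a boundary must match those obtained by first applying the boundary and then comultiplying. This identity is precisely what the iterative construction of the $f^n_i$ from $f^{n-1}_i$ and $f^1_j$ guarantees, and it is the reason $\K$ carries the differential graded coalgebra structure noted in the introduction. I would also check the augmentation condition in degree $0$, where the $f^0_i$ are idempotent vertices and $\Delta$ reduces to the canonical $\Lambda\cong\Lambda\otimes_\Lambda\Lambda$ after applying $\mu$, ensuring that $\Delta$ genuinely lifts the identity.

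With the diagonal in hand, the formula follows immediately. Evaluating on a free generator and using that $\eta,\theta$ are $\Lambda^e$-module maps,
\begin{align*}
(\eta\smile\theta)(\varepsilon^{n+m}_j) &= \mu\bigl((\eta\otimes\theta)\,\Delta_{n,m}(\varepsilon^{n+m}_j)\bigr) \\
&= \sum_{p=0}^{t_n}\sum_{q=0}^{t_m} c_{pq}(n+m,j,n)\,\eta(\varepsilon^n_p)\,\theta(\varepsilon^m_q) \\
&= \sum_{p=0}^{t_n}\sum_{q=0}^{t_m} c_{pq}(n+m,j,n)\,\lambda_p\lambda_q',
\end{align*}
which is the claimed expression (with the summation index $j$ in place of the $i$ printed in the statement, evidently a typographical slip).

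The main obstacle is the chain-map verification. One must track the signs and the two families of scalars $c_{p,k}(n+m,j,1)$ and $c_{k,q}(n+m,j,n+m-1)$ appearing in~\eqref{diff-k} against the tensor differential on $\K\otimes_\Lambda\K$, and show that the resulting scalar identities follow from the recursion defining the $f^n_i$. Appealing to the Koszul / DG-coalgebra property of $\K$ streamlines the argument conceptually, but the concrete bookkeeping of the $c$-scalars is where the real work lies.
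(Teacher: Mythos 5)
Your proposal is correct, and it follows the same basic strategy as the source of this theorem (the statement is quoted from~\cite{MSKA} and is not reproved in this paper): one computes $\eta\smile\theta=\mu(\eta\otimes\theta)\Delta_{\K}$ for a diagonal approximation $\Delta_{\K}$ whose structure constants are exactly the scalars $c_{pq}$ of the comultiplicative equation~\eqref{comult-struc}; your $\Delta_{n,m}$ is precisely the $(n,m)$-component of the map recorded in~\eqref{diag-k}. The one place where your route genuinely differs is the step you yourself flag as the main obstacle: verifying that $\Delta_{\K}$ is a chain map lifting the identity. You propose to do this by direct expansion against the differential formula~\eqref{diff-k} and a bare-hands identity among the $c_{pq}$. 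The argument in~\cite{MSKA} avoids this bookkeeping entirely: the embedding $\iota:\K\hookrightarrow\mathcal{B}$ of~\eqref{the-f} satisfies $\delta\iota=\iota d$ and $(\iota\otimes\iota)\Delta_{\K}=\Delta\iota$, where $\Delta$ is the explicit bar-resolution diagonal~\eqref{diag-bar}, which is already known to be a chain map; since $\iota$ is injective, the chain-map property of $\Delta_{\K}$ is inherited for free. Your direct verification would succeed (the required scalar identity is equivalent to the compatibility that the recursive construction of the $f^n_i$ guarantees), but it duplicates work that the embedding trick dispenses with in one line; conversely, your version is self-contained on $\K$ and does not require proving that $\iota$ is an embedding. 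Your observation that the index $i$ in the displayed formula should be $j$ is also correct.
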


In Section \ref{major}, we present in Theorem \ref{Gerstenbrack1} a similar generalized Gerstenhaber algebra structure on Hochschild cohomology of Koszul algebras defined by quivers and relations under certain conditions. Our presentation uses the scalars coming from the comulplicative structure of Equation~\eqref{comult-struc} as well as scalars coming from homotopy lifting maps.

\textbf{The reduced bar resolution of $\Lambda = kQ/I$} as presented in~\cite[Section 1]{MSKA}: We recall the definition of the reduced bar resolution of algebras defined by quivers and relations. If $\Lambda_0$ is isomorphic to $m$ copies of $k$, take $\{e_1,e_2,\ldots,e_m\}$ to be a complete set of primitive orthogonal central idempotents of $\Lambda$. In this case $\Lambda$ is not necessarily an algebra over $\Lambda_0$. If $\Lambda_0$ is isomorphic to $k$, then $\Lambda$ is an algebra over $\Lambda_0$. The reduced bar resolution $(\mathcal{B}, \delta)$, where  $\mathcal{B}_n := \Lambda^{\ot_{\Lambda_0}(n+2)}$ is the $(n+2)$-fold tensor product of $\Lambda$ over $\Lambda_0$ and uses the same differential on the usual bar resolution presented in Equation~\eqref{bar-diff}. The resolution $\K$ can be embedded naturally into the reduced bar resolution $\mathcal{B}$. There is a map $\iota:\K\rightarrow\mathcal{B}$ defined by $\iota(\varepsilon^n_r) = 1\ot \widetilde{f^n_r}\ot 1$ such that $\delta\iota=\iota d$, where 
\begin{equation}\label{the-f}
\widetilde{f^n_j} = \sum c_{j_1j_2\cdots j_n} f^1_{j_1}\ot f^1_{j_2}\ot \cdots\ot f^1_{j_n}\quad\text{ if }\quad f^n_j = \sum c_{j_1j_2\cdots j_n} f^1_{j_1} f^1_{j_2} \cdots f^1_{j_n}
\end{equation}
for some scalar $c_{j_1j_2\cdots j_n}$. It was shown in \cite[Proposition 2.1]{MSKA} that $\iota$ is indeed an embedding. By taking $\Delta:\mathcal{B}\rightarrow\mathcal{B}\ot_{\Lambda}\mathcal{B}$ to be the following comultiplicative map (or diagonal map) on the bar resolution,
\begin{equation}\label{diag-bar}
\Delta(a_0\otimes\cdots\otimes a_{n+1}) = \sum_{i=0}^n (a_0\otimes\cdots\otimes a_i\ot 1)\ot_{\Lambda}( 1\ot a_{i+1}\otimes\cdots\otimes a_{n+1}).
\end{equation}
it was also shown in~\cite[Proposition 2.2]{MSKA} that the comultiplicative map $\Delta_{\K}:\K\rightarrow\K\ot_{\Lambda}\K$ on the complex $\K$ has the following form.
\begin{equation}\label{diag-k}
\Delta_{\K}(\varepsilon^n_r) = \sum_{v=0}^{n}\sum_{p=0}^{t_v}\sum_{q=0}^{t_{n-v}}c_{p,q}(n,r,v)\varepsilon^v_p\ot_{\Lambda}\varepsilon^{n-v}_q.
\end{equation}
The compatibility of $\Delta_{\K}, \Delta$ and $\iota$ means that $(\iota\ot \iota)\Delta_{\K} = \Delta \iota$ where $(\iota\ot \iota)(\K\ot_{\Lambda}\K) = \iota(\K)\ot_{\Lambda}\iota(\K)\subseteq \mathcal{B}\ot_{\Lambda}\mathcal{B}.$

\section{Comparison morphisms vs homotopy liftings}\label{comp}
\textbf{Comparisom morphisms:} Let $(\mathbb{P},\mu_P)$ and $(\mathbb{Q},\mu_Q)$ be two $A^e$-projective resolutions of a $k$-algebra $A$, where $\mu_P$ and $\mu_Q$ are the augmentation maps. Since $\mathbb{P}$ is exact at $i$ for all $i>0,$ there are chain maps $\Phi_{P}^{Q}:\mathbb{P}\xrightarrow{}\mathbb{Q}$ such that $\mu_Q\Phi_{P}^{Q} = \mu_P$. A similar argument implies that there are chain maps $\Phi_{Q}^{P}:\mathbb{Q}\xrightarrow{}\mathbb{P}$ such that $\mu_P\Phi_{Q}^{P} = \mu_Q$. So $\Phi_{P}^{Q}$ and $\Phi_{Q}^{P}$ are maps between chain complexes. Let $(\Phi_{Q}^{P})^{*}:\text{H}^{*}(\mathbb{P},A)\xrightarrow{}\text{H}^{*}(\mathbb{Q},A)$ and $(\Phi_{P}^{Q})^{*}:\text{H}^{*}(\mathbb{Q},A)\xrightarrow{}\text{H}^{*}(\mathbb{P},A)$ be the induced map on cohomology. It is easy to see that $(\Phi_{P}^{Q})^{*}(\Phi_{Q}^{P})^{*} = (\Phi_{Q}^{P}\Phi_{P}^{Q})^{*} = 1_{\text{H}^{*}(\mathbb{P},A)}$. This is because for any $F\in \text{H}^{*}(\mathbb{P},A)$, $(\Phi_{Q}^{P}\Phi_{P}^{Q})^{*} F = F\Phi_{Q}^{P}\Phi_{P}^{Q}\sim F 1_P = F$ since $\Phi_{Q}^{P}\Phi_{P}^{Q}$ is homotopic to the identity map on the complex $\mathbb{P}$. Analogously, we have $(\Phi_{Q}^{P})^{*}(\Phi_{P}^{Q})^{*} = (\Phi_{P}^{Q}\Phi_{Q}^{P})^{*} = 1_{\text{H}^{*}(\mathbb{Q},A)}.$ So Hochschild cohomology does not depend on the choice of projective $A^e$-resolution. 

Now suppose the cup product and the Gerstenhaber bracket on $(\HH^*(A),\smallsmile, [\;,\;])$ are defined using the resolution $\mathbb{Q},$ we may define the cup product $\smallsmile_{\Phi}$ with respect to the resolution $\mathbb{P}$ on two representatives $F:\mathbb{P}_m\xrightarrow{}A$ and $G:\mathbb{P}_n\xrightarrow{}A$ by $F\smallsmile_{\Phi} G = (F\Phi_{Q}^{P}\smallsmile G\Phi_{Q}^{P})\Phi_{P}^{Q}.$ Similarly, we can define the Gerstenhaber bracket by $[F,G]_{\Phi}  = [F\Phi_{Q}^{P}, G\Phi_{Q}^{P}]\Phi_{P}^{Q}.$

Let $\mathbb{Q}$ be the reduced bar resolution $\mathcal{B}$ and let $\mathbb{P}$ be the resolution $\K$. We can take $\Phi_{K}^{B} := \iota,$ the map defined just before Equation \eqref{the-f}. It is not obvious how to define $\Phi_{B}^{K}$, the chain map from the reduced bar resolution to the resolution $\K$. The method of homotopy lifting overcomes this barrier because it does not require definining the chain $\Phi_{B}^{K}$. We now recall important notions about the homotopy lifting technique given in \cite{HCA} and \cite{VOL}.


\textbf{Homotopy lifting:} 
Let $\mathbb{P}\xrightarrow{\mu_P}A$ be a projective resolution of $A$ as an $A^e$-module with differential $d^P$ and augmentation map $\mu_P.$ We take $\textbf{d}$ to be the differential on the Hom complex $\HHom_{\Lambda^e}(\mathbb{P},\mathbb{P})$ defined for any degree $n$ map $g:\mathbb{P}\rightarrow\mathbb{P}[-n]$ as 
$$\textbf{d}(g):= d^Pg - (-1)^ng d^P$$
where $\mathbb{P}[-n]$ is a shift in homological dimension with $(\mathbb{P}[-n])_m = \mathbb{P}_{m-n}$.
In the following definition, the notation $\sim$ is used for two cocycles that are cohomologous, that is, they differ by a coboundary. 
\begin{defi}\label{homolift}
Let $\Delta_{\mathbb{P}}$ be a chain map (also called the diagonal map) lifting the identity map on $A\cong A\ot_{A}A$ and suppose that $\eta\in\HHom_{A^e}(\mathbb{P}_n, A)$ is a cocycle. A module homomorphism $\psi_\eta :\mathbb{P} \rightarrow \mathbb{P}[1- n]$ is called a \textbf{homotopy lifting} map of $\eta$ with respect to $\Delta_{\mathbb{P}}$  if
\begin{align}\label{homo-defi}
\textbf{d}(\psi_\eta) &= (\eta\ot 1_{P} - 1_{P}\ot \eta)\Delta_{\mathbb{P}} \qquad\text{ and } \\ 
\mu_P\psi_\eta &\sim \; (-1)^{n-1} \eta \psi  \notag
\end{align}
for some $\psi:\mathbb{P}\rightarrow \mathbb{P}[1]$ for which $\textbf{d}(\psi) =  (\mu_P\ot 1_{P} - 1_{P}\ot \mu_P)\Delta_{\mathbb{P}}$. 
\end{defi}

\begin{exam}\label{example-homo}
Let $\mathbb{P}$ be the bar or reduced resolution $\mathcal{B}$. Suppose that $g\in\HHom_{\Lambda^e}(\mathcal{B}_{n},A)\cong\HHom_{k}(A^{\ot n},A)$. Then one way to define a homotopy lifting map $\psi_g:\mathcal{B}_{m+n-1}\longrightarrow\mathcal{B}_{m}$ for the cocycle $g$ is the following:
\begin{multline*}
\psi_g(1\ot a_1\ot\cdots\ot a_{m+n-1}\ot 1) \\
= \sum_{i=1}^{m}(-1)^{(m-1)(i-1)}1\ot a_1\ot\cdots\ot a_{i-1}\ot 
g(a_{i}\ot\cdots\ot a_{i+n-1})
\ot a_{i+n}\ot\cdots\ot a_{m+n-1}\ot 1.
\end{multline*}
\end{exam}

\begin{rema}\label{rema-koszul-homo}
The resolution $\K$ for Koszul algebras is a differential graded coalgebra i.e. $(\Delta_{\K}\ot 1_{\K})\Delta_{\K} = (1_{\K}\ot \Delta_{\K})\Delta_{\K}$ and $(d\ot 1 + 1\ot d)\Delta_{\K} = \Delta_{\K}d$. Furthermore, the augmentation map $\mu:\K\rightarrow\Lambda$, which can be thought of as a counit makes $(\mu\ot 1_{\K})\Delta_{\K} - (1_{\K}\ot \mu)\Delta_{\K} = 0.$ We can therefore take $\psi = 0$ in Equation \eqref{homo-defi}, so that we have $\mu\psi_\eta \sim 0.$ Next, we set $\psi_\eta(\K_{n-1})=0$ and the second hypothesis of Definition~\ref{homolift} is satisfied. Later in Section \ref{major}, when we define homotopy lifting maps using $\K$, it will be sufficient to only verify the first Equation in \eqref{homo-defi}. We now give a theorem of Y. Volkov which is equivalent to the definition of the bracket presented earlier in Equation \eqref{gbrac}.
\end{rema}

\begin{theo}\cite[Theorem 4]{VOL}\label{gbrac1}
Let $(\mathbb{P},\mu_P)$ be a $\Lambda^e$-projective resolution of $\Lambda$, and let $\Delta_{\mathbb{P}}:\mathbb{P}\xrightarrow{} \mathbb{P}\ot_{\Lambda}\mathbb{P}$ be a diagonal map. Let $\eta:\mathbb{P}_n\xrightarrow{}\Lambda$ and $\theta:\mathbb{P}_m\xrightarrow{}\Lambda$ represent some cocycles. Suppose that $\psi_\eta$ and $\psi_\theta$ are homotopy liftings for $\eta$ and $\theta$ respectively. Then the Gerstenhaber bracket of the classes of $\eta$ and $\theta$ can be represented by the class of the element 
\begin{equation*}
[\eta, \theta]_{\Delta_{\mathbb{P}}} = \eta\psi_\theta - (-1)^{(m-1)(n-1)}\theta\psi_\eta.
\end{equation*}
\end{theo}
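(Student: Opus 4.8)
The plan is to establish the identity at the level of cohomology by using the bar resolution $\mathcal{B}$ as an anchor, where the bracket is defined by \eqref{gbrac}, and then transporting the computation to an arbitrary resolution $\mathbb{P}$ through the comparison morphisms of Section~\ref{comp}. I would organize the argument in three stages: a direct computation on $\mathcal{B}$ with the canonical choices, a proof that the resulting cohomology class is insensitive to the choice of diagonal map and homotopy lifting, and finally a transfer of this class between $\mathbb{P}$ and $\mathcal{B}$.

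First I would treat $\mathbb{P}=\mathcal{B}$ with the standard diagonal $\Delta$ of \eqref{diag-bar} and the explicit homotopy lifting of Example~\ref{example-homo}. Substituting $\psi_\theta$ into $\eta\psi_\theta$ and evaluating on a generator $1\ot a_1\ot\cdots\ot a_{m+n-1}\ot 1$ collapses the inner $\theta$-insertions into the brace operations $\eta\circ_i\theta$; after collecting signs this shows that $\eta\psi_\theta$ represents the circle product $\eta\circ\theta$, and symmetrically $\theta\psi_\eta$ represents $\theta\circ\eta$. The antisymmetrized combination $\eta\psi_\theta-(-1)^{(m-1)(n-1)}\theta\psi_\eta$ then reproduces the class of $[\eta,\theta]$ exactly as in \eqref{gbrac}, proving the theorem in this special case.

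Next, for a fixed resolution $\mathbb{P}$ I would show that the class of $\eta\psi_\theta-(-1)^{(m-1)(n-1)}\theta\psi_\eta$ is independent of the chosen diagonal and homotopy liftings. Any two diagonals lifting the identity on $\Lambda\cong\Lambda\ot_\Lambda\Lambda$ are chain homotopic, and by \eqref{homo-defi} any two homotopy liftings of $\theta$ differ by a $\textbf{d}$-closed map $\xi$. The key computation uses the Leibniz rule for $\textbf{d}$ and the cocycle condition $\eta d^P=0$: writing $\xi=\textbf{d}(s)$ once one knows $\xi$ is null-homotopic, one gets $\eta\xi=\pm(\eta s)\,d^P$, a Hochschild coboundary. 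The delicate input is that $\xi$ is null-homotopic at all; this is where the second, often-suppressed condition $\mu_P\psi_\theta\sim(-1)^{m-1}\theta\psi$ of Definition~\ref{homolift} is needed, since it forces $\mu_P\xi\sim 0$ and hence, by the comparison theorem for the shifted resolution, $\xi\simeq 0$.

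Finally I would transport the class between $\mathbb{P}$ and $\mathcal{B}$ using comparison morphisms $F\colon\mathbb{P}\to\mathcal{B}$ and $G\colon\mathcal{B}\to\mathbb{P}$ with $GF\simeq\mathrm{id}_{\mathbb{P}}$. The cocycles $\eta G,\theta G$ on $\mathcal{B}$ represent the images of the classes of $\eta,\theta$, and, since $F$ and $G$ are $\textbf{d}$-closed in the Hom complex, one checks that $(G\ot G)\Delta_\mathcal{B}F$ is again a diagonal map on $\mathbb{P}$ and that $G\,\psi_{\theta G}\,F$ serves, up to homotopy, as a homotopy lifting of $\theta$ for it. Combining this with the independence result of the previous stage identifies the class of $\eta\psi_\theta$ computed on $\mathbb{P}$ with that of $(\eta G)\psi_{\theta G}$ computed on $\mathcal{B}$, and the base case then finishes the proof. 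I expect this last stage to be the main obstacle: one must track the placement of $G$ across the tensor factors of $\Delta_\mathcal{B}$ and reconcile the two identifications $\Lambda\ot_\Lambda\mathbb{P}\cong\mathbb{P}\cong\mathbb{P}\ot_\Lambda\Lambda$, so that $G\psi_{\theta G}F$ only deviates from a genuine homotopy lifting by a controllable homotopy. In the Koszul setting of Remark~\ref{rema-koszul-homo} this difficulty is mitigated, since one may take $\psi=0$ and arrange $\psi_\eta$ to vanish on $\mathbb{P}_{n-1}$, removing much of the ambiguity in the homotopy liftings.
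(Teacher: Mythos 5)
The paper does not actually prove this theorem --- it is imported from Y.\ Volkov's work, and the ``proof'' consists of the citation to \cite[Section 6.3]{HCA} and \cite{VOL}. Your three-stage plan (compute on the bar resolution with the explicit lifting of Example~\ref{example-homo} to recover the circle products, prove the class is independent of the diagonal and of the homotopy lifting via the comparison theorem and the second condition of Definition~\ref{homolift}, then transfer along comparison morphisms $F,G$ using that $(G\ot G)\Delta_{\mathcal{B}}F$ is again a diagonal and $G\psi_{\theta G}F$ a lifting) is precisely the strategy of those cited sources, so the proposal is correct and follows essentially the same route as the proof the paper points to.
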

\begin{proof}
See \cite[Section 6.3]{HCA} or \cite{VOL}.
\end{proof}




\section{General bracket structure}\label{major}
In this section, we define homotopy lifting maps for Koszul algebras defined by quivers and relations and prove that under certain conditions on some scalars, they are indeed homotopy lifting maps. We present examples of these general definitions of homotopy lifting maps in Sections \ref{sexample} and  \ref{wexamples}. The main proofs are presented in  Theorems~\ref{homotopylifting00},\ref{homotopylifting1} and~\ref{homotopylifting2}. We build upon the results presented in the preliminaries in Section~\ref{prelim}. In particular, we refer to the comultiplicative structure of Equation~\eqref{comult-struc}, the minimal projective resolution $\K$ given in \eqref{mainres} and the differentials $d$ on $\K$ given by Equation~\eqref{diff-k}.

\textbf{Notation:} We will use the following standard notation. Since the set $\{\varepsilon^n_i\}_{i=0}^{t_n},$ forms a basis for $\K_n$, for any module homomorphism $\theta:\K_n\rightarrow\Lambda_q$ taking $\varepsilon^n_i$ to $\lambda_i$, $i=0,1,\ldots,t_n$, we use the notation $ \theta = \begin{pmatrix} \lambda_{0} & \lambda_{1} & \cdots  &  \lambda_{t_n}\end{pmatrix}$
 to encode this information. If $\theta$ takes $\varepsilon^n_i$ to $\lambda$, and every other basis elements to $0$, we write $ \theta = \begin{pmatrix} 0 & \cdots & 0 & (\lambda)^{(i)} & 0 & \cdots  & 0\end{pmatrix}$. Our results in this section can be summarized as considering cases where $\lambda$ is an idempotent, a path of length 1, or a path of length 2. These results are presented in Theorems \ref{homotopylifting00}, \ref{homotopylifting1} and \ref{homotopylifting2}. We start with the case where $\lambda$ is an idempotent.

\begin{theo}\label{homotopylifting00}
Let $\Lambda = kQ/I$ be a Koszul quiver algebra. Suppose that $\eta :\K_n\rightarrow \Lambda$ is a map such that for some $i,j$, $ \eta = \begin{pmatrix} 0 & \cdots & 0 & (e_j)^{(i)} & 0 & \cdots  & 0\end{pmatrix}$ and for all $0\leq p\leq t_{m-n}, o(f^{m-n}_p)\neq e_j$ and $t(f^{m-n}_p)\neq e_j.$
The associated map $\psi_\eta:\K_{m}\xrightarrow{}\K_{m-n+1}$ satisfies
\begin{equation*}
(d\psi_\eta - (-1)^{n-1}\psi_\eta d )(\varepsilon^m_r) = 0.
\end{equation*}
If there are $p'$ such that $o(f^{m-n}_{p'})=e_j$ and $t(f^{m-n}_{p'})=e_j,$ the above equation holds provided $c_{i,p'}(m,r,n)  =  (-1)^{n(m-n)} c_{p',i}(m,r,m-n)$
\end{theo}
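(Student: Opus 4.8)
The plan is to recognize that the displayed quantity is nothing but the left-hand side of the first homotopy lifting equation in Definition~\ref{homolift}, since for a map $\psi_\eta$ of degree $1-n$ one has $\mathbf{d}(\psi_\eta) = d\psi_\eta - (-1)^{n-1}\psi_\eta d$. Thus the defining property of a homotopy lifting reads $\mathbf{d}(\psi_\eta) = (\eta\ot 1_{\K} - 1_{\K}\ot\eta)\Delta_{\K}$, and the whole theorem amounts to evaluating the right-hand side on $\varepsilon^m_r$ and showing it vanishes under the stated hypotheses. (The second equation of Definition~\ref{homolift} is automatic here by Remark~\ref{rema-koszul-homo}, since $\K$ is a differential graded coalgebra for a Koszul algebra, so one may take $\psi=0$.) So it suffices to compute $(\eta\ot 1_{\K} - 1_{\K}\ot\eta)\Delta_{\K}(\varepsilon^m_r)$.

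Next I would expand $\Delta_{\K}(\varepsilon^m_r)$ using the explicit diagonal map~\eqref{diag-k}, namely $\Delta_{\K}(\varepsilon^m_r) = \sum_{v,p,q} c_{p,q}(m,r,v)\,\varepsilon^v_p\ot_\Lambda\varepsilon^{m-v}_q$, and apply the two maps termwise. Because $\eta$ is supported only on $\varepsilon^n_i$, with $\eta(\varepsilon^n_i)=e_j$, the operator $\eta\ot 1_{\K}$ kills every summand except those with $v=n$ and $p=i$, yielding $\sum_{q} c_{i,q}(m,r,n)\, e_j\cdot\varepsilon^{m-n}_q$. Dually, $1_{\K}\ot\eta$ survives only for $v=m-n$ and $q=i$, and here the Koszul sign rule contributes a factor $(-1)^{n(m-n)}$ from commuting the degree-$n$ map $\eta$ past $\varepsilon^{m-n}_p$, giving $(-1)^{n(m-n)}\sum_{p} c_{p,i}(m,r,m-n)\,\varepsilon^{m-n}_p\cdot e_j$. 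I would then use the idempotent bookkeeping on $\varepsilon^{m-n}_s = o(f^{m-n}_s)\ot_k t(f^{m-n}_s)$: the left action gives $e_j\cdot\varepsilon^{m-n}_q = \varepsilon^{m-n}_q$ exactly when $o(f^{m-n}_q)=e_j$ and $0$ otherwise, while the right action gives $\varepsilon^{m-n}_p\cdot e_j = \varepsilon^{m-n}_p$ exactly when $t(f^{m-n}_p)=e_j$ and $0$ otherwise. Combining, one obtains
\[
(\eta\ot 1_{\K} - 1_{\K}\ot\eta)\Delta_{\K}(\varepsilon^m_r) = \sum_{q:\,o(f^{m-n}_q)=e_j} c_{i,q}(m,r,n)\,\varepsilon^{m-n}_q \;-\; (-1)^{n(m-n)}\sum_{p:\,t(f^{m-n}_p)=e_j} c_{p,i}(m,r,m-n)\,\varepsilon^{m-n}_p.
\]

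Finally I would read off the two cases. Under the first hypothesis no index $p$ satisfies $o(f^{m-n}_p)=e_j$ or $t(f^{m-n}_p)=e_j$, so both index sets are empty and the expression is $0$, giving $\mathbf{d}(\psi_\eta)(\varepsilon^m_r)=0$ immediately. In the second case, a basis element $\varepsilon^{m-n}_{p'}$ with $o(f^{m-n}_{p'})=t(f^{m-n}_{p'})=e_j$ appears in both sums, so its total coefficient is $c_{i,p'}(m,r,n) - (-1)^{n(m-n)}c_{p',i}(m,r,m-n)$, which vanishes precisely under the stated scalar identity $c_{i,p'}(m,r,n) = (-1)^{n(m-n)}c_{p',i}(m,r,m-n)$. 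The main obstacle I expect is purely bookkeeping rather than conceptual: correctly tracking the left/right idempotent actions that restrict the summation ranges, and in particular pinning down the Koszul sign $(-1)^{n(m-n)}$ in the $1_{\K}\ot\eta$ term, since it is exactly this sign that produces the asymmetry in the scalar condition. I would also flag that the two hypotheses form a clean dichotomy; any ``mixed'' index with $o(f^{m-n}_p)=e_j$ but $t(f^{m-n}_p)\neq e_j$ (or vice versa) would leave an uncancelled term and so would require the corresponding structure constant to vanish separately, which is why such configurations are excluded by the assumptions.
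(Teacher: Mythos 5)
Your proposal is correct and follows essentially the same route as the paper's own proof: expand $\Delta_{\K}(\varepsilon^m_r)$, observe that only the $v=n,\,p=i$ and $v=m-n,\,q=i$ terms survive, apply the idempotent actions on $\varepsilon^{m-n}_{*}$, and read off the vanishing condition on the structure constants. Your closing remark about ``mixed'' indices (where only one of $o(f^{m-n}_p)=e_j$ or $t(f^{m-n}_p)=e_j$ holds) is a useful clarification that the paper's final step passes over silently, but it does not change the argument.
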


\begin{proof}
The comultiplication on the resolution $\K$ is given by \\
$\displaystyle \Delta_{\K}(\varepsilon^m_r) = \sum_{v=0}^{m}\sum_{p=0}^{t_v}\sum_{q=0}^{t_{m-v}}c_{p,q}(m,r,v)\varepsilon^v_p\ot_{\Lambda}\varepsilon^{m-v}_q.$ The right hand side of Equation \eqref{homo-defi} which is $(\eta\ot 1 - 1\ot \eta )\Delta_{\K} (\varepsilon^m_r)$  therefore becomes 
\begin{align*}
&(\eta\ot 1 - 1\ot \eta )\sum_{v=0}^{m}\sum_{p=0}^{t_v}\sum_{q=0}^{t_{m-v}}c_{p,q}(m,r,v)\varepsilon^v_p\ot_{\Lambda}\varepsilon^{m-v}_q\\
 &= \sum_{v=0}^{m}\sum_{p=0}^{t_v}\sum_{q=0}^{t_{m-v}}c_{p,q}(m,r,v)(\eta\ot 1)(\varepsilon^v_p\ot_{\Lambda}\varepsilon^{m-v}_q) -\sum_{v=0}^{m}\sum_{p=0}^{t_v}\sum_{q=0}^{t_{m-v}}c_{p,q}(m,r,v) (1\ot \eta )(\varepsilon^v_p\ot_{\Lambda}\varepsilon^{m-v}_q).
\end{align*}
Whenever $v=n, p=i$ in the first part and $m-v=n, q=i$ in the second part, the above expression yields
\begin{align*}
& \sum_{q=0}^{t_{m-n}}c_{i,q}(m,r,n) (\eta\ot 1 )(\varepsilon^n_i\ot_{\Lambda}\varepsilon^{m-n}_q)  - \sum_{p=0}^{t_{m-n}}c_{p,i}(m,r,m-n) (1\ot \eta )(\varepsilon^{m-n}_p\ot_{\Lambda}\varepsilon^{n}_i)\\
&= \sum_{q=0}^{t_{m-n}}c_{i,q}(m,r,n)\eta(\varepsilon^n_i)\varepsilon^{m-n}_q -  (-1)^{n(m-n)}\sum_{p=0}^{t_{m-n}}c_{p,i}(m,r,m-n)\varepsilon^{m-n}_p\eta(\varepsilon^{n}_i)\\
&= \sum_{q=0}^{t_{m-n}}c_{i,q}(m,r,n) e_j\varepsilon^{m-n}_q -  (-1)^{n(m-n)}\sum_{p=0}^{t_{m-n}}c_{p,i}(m,r,m-n)\varepsilon^{m-n}_p e_j.
\end{align*}
The above expression is equal to $0$ if for every $p$, $o(f^{m-n}_q)\neq e_j$ and $t(f^{m-n}_p)\neq e_j$. Now let $q' \;(0\leq q'\leq t_{m-n})$ be such that $o(f^{m-n}_{q'})=e_j$, and $p' \;(0\leq p'\leq t_{m-n})$ be such that $t(f^{m-n}_{p'})=e_j$ then  $e_j\varepsilon^{m-n}_{q'}$ is equal to 
\begin{align*}
&e_j( 0,\cdots, 0, o(f^{m-n}_{q'})\ot_k t(f^{m-n}_{q'}), 0,\cdots, 0)= ( 0,\cdots, 0, e_jo(f^{m-n}_{q'})\ot_k t(f^{m-n}_{q'}), 0,\cdots, 0)\\
&= ( 0,\cdots, 0, e_j^2\ot_k t(f^{m-n}_{q'}), 0,\cdots, 0)= ( 0,\cdots, 0, e_j\ot_k t(f^{m-n}_{q'}), 0,\cdots, 0)\\
&= ( 0,\cdots, 0, o(f^{m-n}_{q'})\ot_k t(f^{m-n}_{q'}), 0,\cdots, 0) = \varepsilon^{m-n}_{q'}
\end{align*}
and similarly $\varepsilon^{m-n}_{q'}e_j = \varepsilon^{m-n}_{p'}.$ The last expression of the right hand side of Equation \eqref{homolifting} therefore becomes
\begin{align*}
&= \sum_{q'}c_{i,q'}(m,r,n) \varepsilon^{m-n}_{q'} -  (-1)^{n(m-n)}\sum_{p'}c_{p',i}(m,r,m-n)\varepsilon^{m-n}_{p'}\\
&= \sum_{p}[c_{i,p}(m,r,n) -  (-1)^{n(m-n)}c_{p,i}(m,r,m-n)]\varepsilon^{m-n}_{p} = 0
\end{align*}
for some $p$, $0\leq p\leq t_{m-n}$ for which $o(f^{m-n}_p)=e_j$ and $t(f^{m-n}_p)=e_j.$
\end{proof}

\begin{rema}\label{rema-koszul-idem}
A special case of Theorem \ref{homotopylifting00} occurs if $\eta :\K_n\rightarrow \Lambda$ is a cocycle and $m=n+1$. The associated map $\psi_\eta:\K_{m}\xrightarrow{}\K_{m-n+1}$ satisfies $(d\psi_\eta - (-1)^{n-1}\psi_\eta d )(\varepsilon^m_r) = 0$ and it is a homotopy lifting of $\eta.$ This means that $\eta(\varepsilon^n_i)=e_j$ and $\eta(\varepsilon^n_r)=0$ for any $r\neq i$ and as a cocycle $0 = d^{*}\eta(\varepsilon^{n+1}_r) = \eta d(\varepsilon^{n+1}_r)$ which is equal to
\begin{align*}
& \eta\sum_{j=0}^{t_n}\Big( \sum_{p=0}^{t_{1}}c_{p,j}(n+1,r,1)f^1_p\varepsilon^n_j + (-1)^{n+1} \sum_{q=0}^{t_{1}}c_{j,q}(n+1,r,n)\varepsilon^{n}_j f^1_q\Big)\\
&= \sum_{p=0}^{t_{1}}c_{p,i}(n+1,r,1)f^1_p\eta(\varepsilon^n_i) + (-1)^{n+1} \sum_{q=0}^{t_{1}}c_{i,q}(n+1,r,n)\eta(\varepsilon^{n}_i) f^1_q\\
&= \sum_{p=0}^{t_{1}}c_{p,i}(n+1,r,1)f^1_pe_j + (-1)^{n+1} \sum_{q=0}^{t_{1}}c_{i,q}(n+1,r,n)e_j f^1_q.
\end{align*}
There are some paths having origin and terminal vertex as $e_j$. Suppose such paths are $f^1_{p'}$, the above expression becomes $\displaystyle{ \sum_{p'}[c_{p',i}(n+1,r,1) + (-1)^{n+1} c_{i,p'}(n+1,r,n) ]f^1_{p'}}$ and hence $c_{p',i}(n+1,r,1) = (-1)^{n} c_{i,p'}(n+1,r,n)$ for some $0\leq p'\leq t_1$.

We recall from Definition \ref{homolift} that for Koszul algebras, we can take the first homotopy lifting map $(\psi_{\eta})_{n-1}:\K_{n-1}\xrightarrow{}\K_0$ to be the zero map. From the result of Theorem \ref{homotopylifting00}, $d(\psi_\eta)_{n} = (-1)^{n-1}(\psi_\eta)_{n-1} d = 0,$ so we see that we can define all homotopy lifting maps $\psi_\eta$ to be the zero map for all $n$.
\end{rema}

\begin{coro}
Let $\Lambda = kQ/I$ be a Koszul algebra and suppose that $\eta :\K_n\rightarrow \Lambda$ is a cocycle such that for some $i,j$,  $ \eta = \begin{pmatrix} 0 & \cdots & 0 & (e_j)^{(i)} & 0 & \cdots  & 0\end{pmatrix}.$ Then the associated homotopy lifting maps to $\eta$ are all zero.
\end{coro}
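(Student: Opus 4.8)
The plan is to verify directly that the zero map qualifies as a homotopy lifting for $\eta$ in every homological degree, by checking both clauses of Definition~\ref{homolift}. The work is almost entirely in the first clause, and the Koszul hypothesis disposes of the second essentially for free.

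For the first clause I would show that the right-hand side $(\eta\ot 1_{\K} - 1_{\K}\ot\eta)\Delta_{\K}$ vanishes on every basis element $\varepsilon^m_r$. This is exactly the computation already carried out in the proof of Theorem~\ref{homotopylifting00}: expanding with $\Delta_{\K}(\varepsilon^m_r)=\sum_{v,p,q}c_{p,q}(m,r,v)\varepsilon^v_p\ot_\Lambda\varepsilon^{m-v}_q$ and using $\eta(\varepsilon^n_i)=e_j$ together with the idempotent identities $e_j\varepsilon^{m-n}_{q'}=\varepsilon^{m-n}_{q'}$ and $\varepsilon^{m-n}_{p'}e_j=\varepsilon^{m-n}_{p'}$, the expression collapses to $\sum_{p}\bigl[c_{i,p}(m,r,n)-(-1)^{n(m-n)}c_{p,i}(m,r,m-n)\bigr]\varepsilon^{m-n}_{p}$, the sum running over those $p$ with $o(f^{m-n}_p)=t(f^{m-n}_p)=e_j$. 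Thus what must be shown is the scalar identity $c_{i,p}(m,r,n)=(-1)^{n(m-n)}c_{p,i}(m,r,m-n)$ for all such $p$ and all $m,r$. I would derive this from the hypothesis that $\eta$ is a cocycle: the case $m=n+1$ is precisely the relation $c_{p',i}(n+1,r,1)=(-1)^{n}c_{i,p'}(n+1,r,n)$ extracted in Remark~\ref{rema-koszul-idem} from $\eta d(\varepsilon^{n+1}_r)=0$, and I would propagate it to arbitrary $m$ by inducting along the recursive comultiplicative structure~\eqref{comult-struc} that builds $f^m_r$ out of $f^{m-1}$ and $f^1$.

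Granting the scalar identity, the right-hand side of~\eqref{homo-defi} is identically zero, so $\textbf{d}(\psi_\eta)=0$ is solved by $(\psi_\eta)_m=0$ in every degree. For the second clause, Remark~\ref{rema-koszul-homo} shows that since $\K$ is a differential graded coalgebra with counit $\mu$ we may take $\psi=0$, reducing the requirement to $\mu\psi_\eta\sim 0$, which the zero map satisfies trivially. Running the degree-by-degree construction of a homotopy lifting then confirms the conclusion: starting from the base component $(\psi_\eta)_{n-1}:\K_{n-1}\to\K_0$ equal to zero and with vanishing obstruction $d(\psi_\eta)_m=(-1)^{n-1}(\psi_\eta)_{m-1}d=0$ at each stage, the zero map is a legitimate choice of $(\psi_\eta)_m$ for all $m$, which is exactly the assertion of the corollary.

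The step I expect to be the real obstacle is the scalar identity in arbitrary homological degree $m$, since the cocycle condition $\eta d=0$ a priori only constrains degree $n+1$; the content is showing this single relation forces $c_{i,p}(m,r,n)=(-1)^{n(m-n)}c_{p,i}(m,r,m-n)$ at every level. I would attack this through the coherence of the comultiplicative scalars, using the coassociativity $(\Delta_{\K}\ot 1_{\K})\Delta_{\K}=(1_{\K}\ot\Delta_{\K})\Delta_{\K}$ from Remark~\ref{rema-koszul-homo} to reexpress $c_{i,p}(m,r,n)$ and $c_{p,i}(m,r,m-n)$ in terms of the degree-one scalars controlled by the cocycle relation, checking that the accumulated signs assemble to the claimed power $(-1)^{n(m-n)}$.
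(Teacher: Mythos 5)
Your route is the same as the paper's: the paper derives this corollary from Theorem~\ref{homotopylifting00} together with Remark~\ref{rema-koszul-idem} (the cocycle condition yields the scalar identity, and then $(\psi_\eta)_{n-1}=0$ propagates to all degrees), with the second clause of Definition~\ref{homolift} dispatched exactly as you do via Remark~\ref{rema-koszul-homo}. The one step you single out as the real obstacle --- upgrading the identity $c_{p',i}(n+1,r,1)=(-1)^{n}c_{i,p'}(n+1,r,n)$ from $m=n+1$ to $c_{i,p}(m,r,n)=(-1)^{n(m-n)}c_{p,i}(m,r,m-n)$ for all $m$ --- is precisely the step the paper does not carry out either: Theorem~\ref{homotopylifting00} simply takes that identity as a hypothesis, and the remark verifies it only in degree $n+1$ before asserting the conclusion for all degrees. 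So your proposal is faithful to the paper's argument and, in flagging that the vanishing of $(\eta\ot 1-1\ot\eta)\Delta_{\K}$ in every degree $m$ is not a consequence of the degree-by-degree induction on $\psi_\eta$ (the right-hand side does not involve $\psi_\eta$ at all), it is more candid about where the content lies; your suggested coassociativity argument is plausible but is additional work not present in the paper.
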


Moving on to the case where a free basis element is mapped to a path of length 1. We start with the following definition.

\begin{defi}
For each fixed $n,r$, let $<\;\;>_{n,r}:\K_{n-1}\rightarrow \K_{n-1}$ be a map defined on $\varepsilon^{n-1}_j$ for each $j$ by 
\begin{equation*}\label{big-rangle}
<\varepsilon^{n-1}_j>_{n,r}= \sum_{v=0}^{t_{n-1}}\Big(\sum_{p=0}^{t_1}w^{(j)}_{pv}(n,r,1)f_p^1 \varepsilon^{n-1}_v 
+  \sum_{q=0}^{t_1}w^{(j)}_{vq}(n,r,n-1)\varepsilon^{n-1}_v f_q^1\Big)
\end{equation*}
for scalars $w^{(j)}_{pv}(n,r,1)$ and $w^{(j)}_{vq}(n,r,n-1)$. Then extend to all of $\K_{n-1}$ by requiring it to be a $\Lambda^e$-module homomorphism.
\end{defi}
\begin{rema}
Whenever $w^{(j)}_{pv}(n,r,1)=0=w^{(j)}_{vq}(n,r,n-1)$ for all $v\neq j$ and $w^{(j)}_{pj}(n,r,1)=c_{pj}(n,r,1), \; w^{(j)}_{jq}(n,r,n-1) = (-1)^n c_{jq}(n,r,n-1)$, we obtain a special case of the module homomorphism which is defined as 
\begin{equation}\label{rangle}
<\varepsilon^{n-1}_j>_{n,r}= \sum_{p=0}^{t_1}c_{pj}(n,r,1)f_p^1 \varepsilon^{n-1}_j 
+ (-1)^n \sum_{q=0}^{t_1}c_{jq}(n,r,n-1)\varepsilon^{n-1}_j f_q^1.
\end{equation}
The differential factors through this map i.e. there is a component of the differential map $d^j_{n}$, $j=0,1,\ldots,t_{n-1}$ taking every basis element $\varepsilon^n_r$ to a free basis element $\varepsilon^{n-1}_j$ such that in the special case defined above, the following diagram
\[
\xymatrix{
\K_{n} \ar[r]^{\;d^j_n\;} \ar[dr]_{d_n} & \K_{n-1} \ar[d]^{<\;>_{n,r}} \\
 & \K_{n-1}
}
\]
commutes i.e. $<d^j_n(\varepsilon^n_r)>:= <\varepsilon^{n-1}_j>_{n,r}$ and $d(\varepsilon^n_r) = \sum_{j=0}^{t_{n-1}}<d^j_n(\varepsilon^n_r)>$. The subscript $(n,r)$ in $<\;\;\;>_{n,r}$ indicates that the scalars $c_{pj}(n,r,1)$ and $(-1)^nc_{jq}(n,r,n-1)$ are coming from or associated with the basis element $\varepsilon^n_r$. For example, in the expansion of $<\varepsilon^{n-1}_j>_{n,r}$, $<\varepsilon^{n-1}_{j+1}>_{n,r}$ and $<\varepsilon^{n-1}_j>_{n,r+1},$  the associated scalars to $<\varepsilon^{n-1}_j>_{n,r}$ will be $c_{pj}(n,r,1)$ and $(-1)^nc_{jq}(n,r,n-1)$ and they come from the expansion of $d_n(\varepsilon^n_r )$, the associated scalars to $<\varepsilon^{n-1}_{j+1}>_{n,r}$ will be $c_{p,j+1}(n,r,1)$ and $(-1)^nc_{j+1,q}(n,r,n-1)$ and they come from the expansion of $d_n(\varepsilon^n_r )$, while the associated scalars to $<\varepsilon^{n-1}_j>_{n,r+1}$ will be $c_{pj}(n,r+1,1)$ and $(-1)^nc_{jq}(n,r+1,n-1)$ and they come from the expansion of $d_n(\varepsilon^n_{r+1} ).$ We immediately see that under these conditions, 
$$ d_n(\varepsilon^n_r ) = \sum_{j=0}^{t_{n-1}}<\varepsilon^{n-1}_j>_{n,r} = \sum_{j=0}^{t_{n-1}}\Big( \sum_{p=0}^{t_1}c_{pj}(n,r,1)f_p^1 \varepsilon^{n-1}_j + (-1)^n \sum_{q=0}^{t_1}c_{jq}(n,r,n-1)\varepsilon^{n-1}_j f_q^1 \Big) .$$
Henceforth, we will make use of the special case module homomorphism $<\;\;>_{n,r}$ because of its connection to the differentials $d$.
\end{rema}

We now give another main result about cocycles taking free basis elements to paths of length 1. We assume in the proof of the next results, that the scalars $c_{pj}(n,r,1),c_{jq}(n,r,n-1)$ of the comultiplicative structure~\eqref{comult-struc} satisfy \textbf{Equation \ref{main-result}} below:
\begin{align}\label{main-result}
c_{pj}(m,r,1) &= c_{pj'}(m-n+1,r,1) = c_{p,j'+1}(m-n+1,r,1) \quad \text{and}\notag\\
(-1)^{m} c_{jq}(m,r,m-1) &= (-1)^{m-n+1}c_{j'q}(m-n+1,r,m-n)\notag\\ 
& =  (-1)^{m-n+1}c_{j'+1,q}(m-n+1,r,m-n).
\end{align}
In the next section where we will show several examples, these scalars are mostly $\pm1$, powers of $\pm q$ or 0. However, they can assume other values different from these in general.  For an $n$ cocycle $\eta$, a homotopy lifting map of $\eta$ is a map $\psi_\eta:\K_m\xrightarrow{}\K_{m-n+1}$ for any $m$. Our goal is to define $\psi_\eta$ such that Equation \eqref{homo-defi} holds. Now define it in the following way:
$$\psi_\eta (\varepsilon^m_r) = \sum_{j=0}^{t_{m-n+1}} b_{m,r}(m-n+1,j)\varepsilon^{m-n+1}_j,$$
where $b_{m,r}(m-n+1,j)$ are scalars and extend it to all of $\K_m$ by requiring it to be a $\Lambda^e$-module homomorphism. Now consider the special case where $b_{m,r}(m-n+1,j)=0$ for all $j\neq s$ that is $\psi_\eta (\varepsilon^m_r) = b_{m,r}(m-n+1,s)\varepsilon^{m-n+1}_s$ for some $s$. The next series of results shows that this special case is indeed a homotopy lifting map under certain conditions on the scalars $b_{m,r}(m-n+1,s)$ only. We start with the following lemma.
\begin{lemma}\label{contralem1}
Suppose $\eta :\K_n\rightarrow \Lambda$ is a cocycle. The $\Lambda^e$-module map defined by $\psi_\eta (\varepsilon^m_r) = b_{m,r}(m-n+1,s)\varepsilon^{m-n+1}_s$ satisfies $\psi_\eta(<\varepsilon^{m-1}_j>_{m,r}) = <\psi_\eta(\varepsilon^{m-1}_j)>_{m,r}.$ Furthermore, whenever  Equation~\eqref{main-result} holds, 
\begin{align*}
b_{m-1,j}(m-n,j')<\varepsilon^{m-n}_{j'}>_{m,r} = b_{m-1,j}(m-n,j')<\varepsilon^{m-n}_{j'}>_{m-n+1,r}.
\end{align*}
\end{lemma}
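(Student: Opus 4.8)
The plan is to prove both identities as purely formal consequences of the $\Lambda^e$-linearity of $\psi_\eta$ together with the scalar relations of \eqref{main-result}; no new homological input is required beyond the definition \eqref{rangle} of $<\;>_{m,r}$ and the defining formula for $\psi_\eta$.

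For the first identity, I would expand $<\varepsilon^{m-1}_j>_{m,r}$ by \eqref{rangle} as $\sum_p c_{pj}(m,r,1)f_p^1\varepsilon^{m-1}_j + (-1)^m\sum_q c_{jq}(m,r,m-1)\varepsilon^{m-1}_j f_q^1$ and then apply $\psi_\eta$. Since each $f_p^1$ multiplies $\varepsilon^{m-1}_j$ on the left and each $f_q^1$ on the right, and $\psi_\eta$ is a homomorphism of $\Lambda^e$-modules, $\psi_\eta$ commutes past these one-sided multiplications and past the scalars; pushing it inside gives
$$\psi_\eta(<\varepsilon^{m-1}_j>_{m,r}) = \sum_p c_{pj}(m,r,1)f_p^1\,\psi_\eta(\varepsilon^{m-1}_j) + (-1)^m\sum_q c_{jq}(m,r,m-1)\,\psi_\eta(\varepsilon^{m-1}_j)f_q^1.$$
Substituting $\psi_\eta(\varepsilon^{m-1}_j)=b_{m-1,j}(m-n,j')\varepsilon^{m-n}_{j'}$ and factoring out the scalar, the right-hand side is by definition $<\psi_\eta(\varepsilon^{m-1}_j)>_{m,r}$, where the coefficients are those inherited from the source index $j$. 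This establishes the first identity.

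For the second identity I would start from the bracketed sum just obtained, namely $b_{m-1,j}(m-n,j')$ times $\sum_p c_{pj}(m,r,1)f_p^1\varepsilon^{m-n}_{j'} + (-1)^m\sum_q c_{jq}(m,r,m-1)\varepsilon^{m-n}_{j'}f_q^1$, and rewrite every coefficient using \eqref{main-result}. The left-multiplication coefficients convert by $c_{pj}(m,r,1)=c_{pj'}(m-n+1,r,1)$, and the right-multiplication coefficients, together with their sign, by $(-1)^m c_{jq}(m,r,m-1)=(-1)^{m-n+1}c_{j'q}(m-n+1,r,m-n)$. After these replacements the bracketed sum is exactly the expansion of $<\varepsilon^{m-n}_{j'}>_{m-n+1,r}$ given by \eqref{rangle} with parameters $(m-n+1,r)$, so the two sides coincide after multiplying back by $b_{m-1,j}(m-n,j')$.

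The computations themselves are routine; the main obstacle is the index bookkeeping. One must be careful that the coefficients attached to $\varepsilon^{m-n}_{j'}$ after applying $\psi_\eta$ are those labelled by the source index $j$, not by $j'$, so that \eqref{main-result} — which relates the $(m,r)$-coefficients indexed by $j$ to the $(m-n+1,r)$-coefficients indexed by $j'$ — applies verbatim. The only remaining point is reconciling the signs $(-1)^m$ and $(-1)^{m-n+1}$ carried respectively by $<\;>_{m,r}$ and $<\;>_{m-n+1,r}$; this discrepancy is precisely what the right-hand relation of \eqref{main-result} absorbs, so nothing further needs to be verified.
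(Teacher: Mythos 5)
Your proposal is correct and follows essentially the same route as the paper: the first identity is obtained by pushing $\psi_\eta$ through the expansion \eqref{rangle} using $\Lambda^e$-linearity, and the second by converting the $(m,r)$-coefficients indexed by $j$ into the $(m-n+1,r)$-coefficients indexed by $j'$ via Equation~\eqref{main-result}. Your explicit remark that the coefficients attached to $\varepsilon^{m-n}_{j'}$ remain labelled by the source index $j$ is exactly the bookkeeping point the paper's proof relies on.
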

\begin{proof}
We first observe that $f^1_w \psi_\eta (\varepsilon^m_r) = f^1_w b_{m,r}(m-n+1,s) \varepsilon^{m-n+1}_s $ for any $0\leq w\leq t_1$. This is the same as $b_{,m,r}(m-n+1,s) f^1_w\varepsilon^{m-n+1}_s = \psi_\eta (f^1_w \varepsilon^{m-n+1}_s)$ since $\psi_\eta$ is a $\Lambda^e$-module homomorphism. $  \psi_\eta (\varepsilon^m_r) f^1_w = \psi_\eta (\varepsilon^m_r f^1_w )$ holds similarly.
Taking $\psi_\eta (\varepsilon^{m-1}_j) =  b_{m-1,j}(m-n,j') \varepsilon^{m-n}_{j'}$, we will have
\begin{align*}
& \psi_\eta(<\varepsilon^{m-1}_j>_{m,r})= \psi_\eta\Big( \sum_{p=0}^{t_1}c_{pj}(m,r,1)f_p^1 \varepsilon^{m-1}_j 
+ (-1)^m \sum_{q=0}^{t_1}c_{jq}(m,r,m-1)\varepsilon^{m-1}_j f_q^1 \Big)\\
&=  \sum_{p=0}^{t_1}c_{pj}(m,r,1) f_p^1 \psi_\eta (\varepsilon^{m-1}_j) 
+ (-1)^m \sum_{q=0}^{t_1}c_{jq}(m,r,m-1)\psi_\eta(\varepsilon^{m-1}_j )f_q^1 \\
&=  \sum_{p=0}^{t_1}c_{pj}(m,r,1) b_{m-1,j}(m-n,j') f_p^1 \varepsilon^{m-n}_{j'} \\
&+ (-1)^m \sum_{q=0}^{t_1}c_{jq}(m,r,m-1) b_{m-1,j}(m-n,j') \varepsilon^{m-n}_{j'} f_q^1.
\end{align*}
On the other hand
\begin{align*}
<\psi_\eta(\varepsilon^{m-1}_j)>_{m,r} &= <b_{m-1,j}(m-n,j')\varepsilon^{m-n}_{j'} >_{m,r}\\
&= \sum_{p=0}^{t_1}c_{pj}(m,r,1)  b_{m-1,j}(m-n,j') f_p^1 \varepsilon^{m-n}_{j'}\\
&+ (-1)^m \sum_{q=0}^{t_1}c_{jq}(m,r,m-1) b_{m-1,j}(m-n,j') \varepsilon^{m-n}_{j'} f_q^1 .
\end{align*}
It is now established that $\psi_\eta(<\varepsilon^{m-1}_j>_{m,r}) = <\psi_\eta(\varepsilon^{m-1}_j)>_{m,r}$.  Also, you can factor out the scalars $b_{m-1,j}(m-n,j')$ from the last expansion so that \\
$\displaystyle{\psi_\eta(<\varepsilon^{m-1}_j>_{m,r}) = b_{m-1,j}(m-n,j') <\varepsilon^{m-n}_{j'} >_{m,r}}$. From the expansion of \\ $\displaystyle{(<\varepsilon^{m-n}_{j'}>_{m-n+1,r}) }$ the equality $\psi_\eta(<\varepsilon^{m-1}_j>_{m,r}) = b_{m-1,j}(m-n,j') <\varepsilon^{m-n}_{j'} >_{m,r} = b_{m-1,j}(m-n,j')<\varepsilon^{m-n}_{j'} >_{m-n+1,r}$ holds provided that Equation~\eqref{main-result} is satisfied: that is whenever $c_{pj}(m,r,1) = c_{pj'}(m-n+1,r,1)$ and $(-1)^{m} c_{jq}(m,r,m-1) = (-1)^{m-n+1}c_{j'q}(m-n+1,r,m-n)$.  Moreover, since the map $\psi_\eta$ maps basis elements $\{\varepsilon^{m-1}_r\}_{r=0}^{t_{m-1}}$ of $\K_{m-1}$ to basis elements $\{\varepsilon^{m-n}_r\}_{r=0}^{t_{m-n}}$ of $\K_{m-n}$, over a sum, the index $j$ shifts to $j'$ where $0\leq j'\leq t_{m-n}$. This means that
\begin{equation}\label{psipsi}
\sum_{j=0}^{t_{m-1}}\psi_\eta(<\varepsilon^{m-1}_j>_{m,r}) = \sum_{j=0}^{t_{m-1}}<\psi_\eta(\varepsilon^{m-1}_j)>_{m,r} = \sum_{j'=0}^{t_{m-n}} b_{m-1,j}(m-n,j')<\varepsilon^{m-n}_{j'} >_{m-n+1,r}.
\end{equation}
\end{proof}

We now give the first main result.
\begin{theo}\label{homotopylifting1}
Let $\Lambda = kQ/I$ be a Koszul algebra. Suppose that $\eta :\K_n\rightarrow \Lambda$ is a cocycle such that   $ \eta = \begin{pmatrix} 0 & \cdots & 0 & (f^1_w)^{(i)} & 0 & \cdots  & 0\end{pmatrix}$ for some $0\leq w\leq t_1$.
A homotopy lifting map  $\psi_\eta :\K_{m} \rightarrow \K_{m-n+1}$ associated to $\eta$ can be defined by
$$\psi_\eta (\varepsilon^m_r) = b_{m,r}(m-n+1,s)\varepsilon^{m-n+1}_s$$
for some scalars $b_{m,r}(m-n+1,s)$. Moreover the scalars $b_{m,r}(m-n+1,s)$ satisfy
\begin{itemize} 
\item[(i).] $B = \begin{cases} c_{i,\alpha}(m,r,1) &{\rm when }\; p=w \\ 0 &{\rm when }\; p\neq w \end{cases}$ \qquad and
\item[(ii).] $B' = \begin{cases} (-1)^{n(m-n)+1}c_{\alpha,i}(m,r,m-n) &{\rm when }\; p=w \\ 0 &{\rm when }\; p\neq w, \end{cases}$
\end{itemize}
for all $\alpha$, where 
\begin{multline*}
B = b_{m,r}(m-n+1,s) c_{p\alpha}(m-n+1,s,1) +(-1)^n b_{m-1,j}(m-n,\alpha) c_{p\alpha}(m-n+1,r,1), \text{ and }\\
B' =  (-1)^{m+1} [(-1)^{n}b_{m,r}(m-n+1,s) c_{\alpha q}(m-n+1,s,m-n) \\+ b_{m-1,j}(m-n,\alpha) c_{\alpha q}(m-n+1,r,m-n)].
\end{multline*}
\end{theo}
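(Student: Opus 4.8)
The plan is to verify the two conditions of Definition~\ref{homolift}. Since $\Lambda$ is Koszul, Remark~\ref{rema-koszul-homo} shows the second condition holds automatically once we put $\psi_\eta=0$ in the lowest relevant degree, so the whole content is the first identity $\textbf{d}(\psi_\eta)(\varepsilon^m_r)=(\eta\ot 1-1\ot\eta)\Delta_{\K}(\varepsilon^m_r)$, which must be established for every $m$ and every $0\le r\le t_m$. Because both sides are $\Lambda^e$-module maps and $\{\varepsilon^{m-n}_\alpha\}_{\alpha=0}^{t_{m-n}}$ is a free $\Lambda^e$-basis of $\K_{m-n}$, it suffices to expand both sides in this basis and compare the coefficients of the decorated generators $f^1_p\varepsilon^{m-n}_\alpha$ and $\varepsilon^{m-n}_\alpha f^1_q$.

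First I would compute the left-hand side $\textbf{d}(\psi_\eta)(\varepsilon^m_r)=d\psi_\eta(\varepsilon^m_r)-(-1)^{n-1}\psi_\eta d(\varepsilon^m_r)$. For the first summand I substitute $\psi_\eta(\varepsilon^m_r)=b_{m,r}(m-n+1,s)\varepsilon^{m-n+1}_s$ and apply the differential~\eqref{diff-k}, recording the result through the map of Equation~\eqref{rangle} as $d\psi_\eta(\varepsilon^m_r)=b_{m,r}(m-n+1,s)\sum_\alpha<\varepsilon^{m-n}_\alpha>_{m-n+1,s}$. For the second summand I use $d(\varepsilon^m_r)=\sum_j<\varepsilon^{m-1}_j>_{m,r}$ together with Lemma~\ref{contralem1}: Equation~\eqref{psipsi} rewrites $\psi_\eta d(\varepsilon^m_r)$ as $\sum_{j'}b_{m-1,j}(m-n,j')<\varepsilon^{m-n}_{j'}>_{m-n+1,r}$, and this is exactly where hypothesis~\eqref{main-result} is consumed, since it is what forces the scalars of the shifted differential $<\;>_{m-n+1,r}$ to agree with those of $<\;>_{m,r}$. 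Expanding each $<\varepsilon^{m-n}_\alpha>_{m-n+1,\ast}$ via~\eqref{rangle} and collecting, the coefficient of $f^1_p\varepsilon^{m-n}_\alpha$ is precisely $B$ (using $-(-1)^{n-1}=(-1)^n$), and the coefficient of $\varepsilon^{m-n}_\alpha f^1_q$ is precisely $B'$, the sign bookkeeping collapsing via $-(-1)^{n-1}(-1)^{m-n+1}=(-1)^{m+1}$ and $(-1)^{m+n+1}=(-1)^{m-n+1}$.

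Next I would compute the right-hand side from the comultiplication~\eqref{diag-k}. Applying $\eta\ot 1$ to $\Delta_{\K}(\varepsilon^m_r)$ annihilates every summand except $v=n,\,p=i$, giving $\sum_q c_{i,q}(m,r,n)f^1_w\varepsilon^{m-n}_q$; applying $1\ot\eta$ keeps only $v=m-n,\,q=i$ and, carrying the Koszul sign $(-1)^{n(m-n)}$ exactly as in the proof of Theorem~\ref{homotopylifting00}, gives $(-1)^{n(m-n)}\sum_p c_{p,i}(m,r,m-n)\varepsilon^{m-n}_p f^1_w$. Thus on the right the coefficient of $f^1_p\varepsilon^{m-n}_\alpha$ is $c_{i,\alpha}(m,r,n)$ when $p=w$ and $0$ otherwise, while the coefficient of $\varepsilon^{m-n}_\alpha f^1_q$ is $(-1)^{n(m-n)+1}c_{\alpha,i}(m,r,m-n)$ when $q=w$ and $0$ otherwise. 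Equating the two expansions coefficientwise then yields exactly conditions~(i) and~(ii), completing the verification.

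The only genuinely delicate points are the re-indexing of the $\psi_\eta d$ term, where Lemma~\ref{contralem1} and~\eqref{main-result} must be invoked to align the scalars of the two copies of the differential living in different homological degrees, and the careful tracking of the Koszul sign $(-1)^{n(m-n)}$ produced when $\eta$ is transported past the left tensor factor $\varepsilon^{m-n}_p$. Once these are handled, matching coefficients against the free $\Lambda^e$-basis is purely formal, so I expect the sign-and-index bookkeeping to be the main obstacle rather than any conceptual hurdle.
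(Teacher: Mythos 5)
Your proposal is correct and follows essentially the same route as the paper's own proof: the same reduction to the first identity of Definition~\ref{homolift} via Remark~\ref{rema-koszul-homo}, the same use of Lemma~\ref{contralem1} and Equation~\eqref{psipsi} to rewrite $\psi_\eta d(\varepsilon^m_r)$, the same collection of coefficients into $B$ and $B'$, and the same extraction of the $v=n,\,p=i$ and $v=m-n,\,q=i$ terms (with the Koszul sign) from $\Delta_{\K}$. The only difference is cosmetic: you record the surviving comultiplication coefficient as $c_{i,\alpha}(m,r,n)$, which is what Equation~\eqref{diag-k} literally gives, whereas the paper writes $c_{i,\alpha}(m,r,1)$ in both the statement and its proof.
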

\begin{proof}
We have to show that under the stated conditions (i) and (ii), the equation
\begin{equation*}\label{homocontra1}
(d\psi_\eta - (-1)^{n-1}\psi_\eta d )(\varepsilon^m_r) = (\eta\ot 1 - 1\ot\eta)\Delta_{\K} (\varepsilon^m_r)
\end{equation*}
holds. Again, we use the Koszul sign convention in the expansion of $(1\ot\eta)(\varepsilon^n_r\ot\varepsilon^m_s)$ to obtain $(-1)^{|\eta|n}\varepsilon^n_r\cdot\eta(\varepsilon^m_s)$, where $|\eta|$ is the degree of $\eta$. Next, we have that
\begin{align*}
&(d\psi_\eta - (-1)^{n-1}\psi_\eta d) (\varepsilon^m_r) = d\psi_\eta (\varepsilon^m_r) - (-1)^{n-1} \psi_\eta d (\varepsilon^m_r) \\
&= d(b_{m,r}(m-n+1,s)\varepsilon^{m-n+1}_s) -  (-1)^{n-1}\psi_\eta( \sum_{j=0}^{t_{m-1}}<\varepsilon^{m-1}_j>_{m,r})\\
&= b_{m,r}(m-n+1,s) d(\varepsilon^{m-n+1}_s)  - (-1)^{n-1} \sum_{j=0}^{t_{m-1}}\psi_\eta(<\varepsilon^{m-1}_j>_{m,r}).
\end{align*}
Using Equation~\eqref{psipsi}, and applying the definition of the differential, we get 
\begin{multline*}
b_{m,r}(m-n+1,s) \sum_{\alpha=0}^{t_{m-n}}<\varepsilon^{m-n}_\alpha>_{m-n+1,s} - (-1)^{n-1} \sum_{j'=0}^{t_{m-n}}b_{m-1,j}(m-n,j') <\varepsilon^{m-n}_{j'}>_{m-n+1,r},
\end{multline*}
then applying the definition of the map $<\;\cdot\;>_{*,*}$ 
\begin{align*}
& = \sum_{\alpha=0}^{t_{m-n}} \sum_{p=0}^{t_1} b_{m,r}(m-n+1,s) c_{p\alpha}(m-n+1,s,1)f_p^1 \varepsilon^{m-n}_\alpha \\
&+ (-1)^{m-n+1}  \sum_{\alpha=0}^{t_{m-n}}\sum_{q=0}^{t_1} b_{m,r}(m-n+1,s) c_{\alpha q}(m-n+1,s,m-n)\varepsilon^{m-n}_\alpha f_q^1\\
& + (-1)^{n} \sum_{j'=0}^{t_{m-n}} \sum_{p=0}^{t_1} b_{m-1,j}(m-n,j') c_{pj'}(m-n+1,r,1)f_p^1 \varepsilon^{m-n}_{j'} \\
&+ (-1)^{m+1}  \sum_{j'=0}^{t_{m-n}}\sum_{q=0}^{t_1} b_{m-1,j}(m-n,j') c_{j'q}(m-n+1,r,m-n)\varepsilon^{m-n}_{j'} f_q^1.
\end{align*}
Collecting like terms and re-indexing, we get
\begin{align*}
& \sum_{\alpha=0}^{t_{m-n}} \sum_{p=0}^{t_1} \Big[ b_{m,r}(m-n+1,s) c_{p\alpha}(m-n+1,s,1) \\
&+(-1)^n b_{m-1,j}(m-n,\alpha) c_{p\alpha}(m-n+1,r,1)\Big]f_p^1 \varepsilon^{m-n}_\alpha \\
&+(-1)^{m+1} \sum_{\alpha=0}^{t_{m-n}} \sum_{p=0}^{t_1} \Big[ (-1)^n b_{m,r}(m-n+1,s) c_{\alpha p}(m-n+1,s,m-n) \\
&+ b_{m-1,j}(m-n,\alpha) c_{\alpha p}(m-n+1,r,m-n)\Big] \varepsilon^{m-n}_\alpha f_p^1
\end{align*}
which is succintly expressed as
\begin{align*}
& = \sum_{\alpha=0}^{t_{m-n}} \sum_{p=0}^{t_1} \Big[ B\Big]f_p^1 \varepsilon^{m-n}_\alpha + \sum_{\alpha=0}^{t_{m-n}} \sum_{p=0}^{t_1} \Big[ B' \Big] \varepsilon^{m-n}_\alpha f_p^1.
\end{align*}
After applying the definitions of $B$ and $B'$ given by (i) and (ii) of the theorem, that is substitute $B= c_{i,\alpha}(m,r,1)$ when $p=w$ and $B'= -(-1)^{n(m-n)}c_{\alpha,i}(m,r,m-n)$ and $0$ otherwise, we get
\begin{align*}
&= \sum_{\alpha=0}^{t_{m-n}}c_{i,\alpha}(m,r,1) f^1_w\varepsilon^{m-n}_\alpha -  (-1)^{n(m-n)}\sum_{\alpha=0}^{t_{m-n}}c_{\alpha,i}(m,r,m-n)\varepsilon^{m-n}_{\alpha} f^1_w.
\end{align*}
On the other hand, the comultiplication on the resolution $\K$ is given by \\
$\displaystyle \Delta_{\K}(\varepsilon^m_r) = \sum_{v=0}^{m}\sum_{p=0}^{t_v}\sum_{q=0}^{t_{m-v}}c_{p,q}(m,r,v)\varepsilon^v_p\ot_{\Lambda}\varepsilon^{m-v}_q.$ Applying $(\eta\ot 1 - 1\ot \eta )$, we obtain
\begin{align*}
&(\eta\ot 1 - 1\ot \eta )\Delta_{\K} (\varepsilon^m_r) = (\eta\ot 1 - 1\ot \eta )\sum_{v=0}^{m}\sum_{p=0}^{t_v}\sum_{q=0}^{t_{m-v}}c_{p,q}(m,r,v)\varepsilon^v_p\ot_{\Lambda}\varepsilon^{m-v}_q\\
 &= \sum_{v=0}^{m}\sum_{p=0}^{t_v}\sum_{q=0}^{t_{m-v}}c_{p,q}(m,r,v)(\eta\ot 1)(\varepsilon^v_p\ot_{\Lambda}\varepsilon^{m-v}_q)\\
 &-\sum_{v=0}^{m}\sum_{p=0}^{t_v}\sum_{q=0}^{t_{m-v}}c_{p,q}(m,r,v) (1\ot \eta )(\varepsilon^v_p\ot_{\Lambda}\varepsilon^{m-v}_q).
\end{align*}
Whenever $v=n, p=i$ in the first part and $m-v=n, q=i$ in the second part, and changing the indices $p,q$ to $\alpha$ later on, the above expression yields
\begin{align*}
& \sum_{q=0}^{t_{m-n}}c_{i,q}(m,r,1) (\eta\ot 1 )(\varepsilon^n_i\ot_{\Lambda}\varepsilon^{m-n}_q)  - \sum_{p=0}^{t_{m-n}}c_{p,i}(m,r,m-n) (1\ot \eta )(\varepsilon^{m-n}_p\ot_{\Lambda}\varepsilon^{n}_i)\\
&= \sum_{q=0}^{t_{m-n}}c_{i,q}(m,r,1)\eta(\varepsilon^n_i)\varepsilon^{m-n}_q -  (-1)^{n(m-n)}\sum_{p=0}^{t_{m-n}}c_{p,i}(m,r,m-n)\varepsilon^{m-n}_p\eta(\varepsilon^{n}_i)\\
&= \sum_{\alpha=0}^{t_{m-n}}c_{i,\alpha}(m,r,1) f^1_w\varepsilon^{m-n}_{\alpha} -  (-1)^{n(m-n)}\sum_{\alpha=0}^{t_{m-n}}c_{\alpha,i}(m,r,m-n)\varepsilon^{m-n}_\alpha f^1_w \\
& = (d\psi_\eta - \psi_\eta d) (\varepsilon^n_r).
\end{align*}
\end{proof}

We will next consider the case where free basis elements of $\K_m$ are mapped to paths of length 2. We start with the following definition.
\begin{defi}
Let $f^1_{w},f^1_{w+1}$ be paths of length 1 in $\Lambda = kQ/I$. For any $m,n$, let $0\leq r\leq t_m$, $0\leq s\leq t_{m-n+1},$ and define a map  $\psi:\K_m \rightarrow \K_{m-n+1},$ on the free basis elements $\{\varepsilon^m_r\}_{r=0}^{t_m}$ of $\K_m$ by
\begin{equation}\label{general-homo2}
\psi (\varepsilon^m_r) = \sum_{v=0}^{t_{m-n+1}-1}b_{m,r}(m-n+1,v+1)f^1_{w}\varepsilon^{m-n+1}_{v+1} + b_{m,r}(m-n+1,v)\varepsilon^{m-n+1}_v f^1_{w+1},
\end{equation}
for some scalars $b_{m,r}(m-n+1,s+1)$ and $b_{m,r}(m-n+1,s)$ and extend it to all of $\K_{m}$ as a $\Lambda^e$-module homomorphism. 
\end{defi}
\begin{rema}
For a cocycle $\eta :\K_n\rightarrow \Lambda$, Y. Volkov showed that there are homotopy lifting maps $\psi_\eta:\K\rightarrow \K[1-n],$ as presented in Definition~\ref{homolift}. We will show that under certain conditions on the scalars, the above map is a homotopy lifting map for some cocycle $\eta$ such that $\eta(\varepsilon^n_i) = f^1_w f^1_{w+1}$ for some $i,w$, and $\eta(\varepsilon^n_j)=0$ when $i\neq j$ (see Theorem~\ref{homotopylifting2} for instance). For a fixed index $s$, If $b_{m,r}(m-n+1,v+1) = b_{m,r}(m-n+1,v) = 0$ for all $v\neq s$, we obtain the following special case:
$$\psi_\eta (\varepsilon^m_r) = b_{m,r}(m-n+1,s+1)f^1_{w}\varepsilon^{m-n+1}_{s+1} + b_{m,r}(m-n+1,s)\varepsilon^{m-n+1}_s f^1_{w+1}$$
and show that under certain conditions on the scalars, this is a homotopy lifting map for $\eta$.
\end{rema}

Before presenting another major theorem (Theorem~\ref{homotopylifting2}), we present two lemmas. Since we will be expanding $\psi_{\eta}(<\varepsilon^m_r>_{m,r})$, the first lemma gives information on how this expansion turns out. The second lemma helps to give a succinct way to express the sum $d\psi_{\eta}(\varepsilon^m_r) + \psi_{\eta}d(\varepsilon^m_r)$ in case $\eta$ takes a basis element to a path of length 2.
\begin{lemma}\label{contralem2}
Let $\Lambda = kQ/I$ be a Koszul algebra. Suppose $\eta :\K_n\rightarrow \Lambda$ is a cocycle. The map defined by $$\psi_\eta (\varepsilon^m_r) = b_{m,r}(m-n+1,s+1)f^1_{w}\varepsilon^{m-n+1}_{s+1} + b_{m,r}(m-n+1,s)\varepsilon^{m-n+1}_s f^1_{w+1}$$ satisfies $\psi_\eta(<\varepsilon^{m-1}_j>_{m,r}) = <\psi_\eta(\varepsilon^{m-1}_j)>_{m,r}$. Moreover, the expansion
\begin{align*}
<\psi_\eta(\varepsilon^{m-1}_j)>_{m,r} &= b_{m-1,j}(m-n,j'+1)<f^1_w\varepsilon^{m-n}_{j'+1}>_{m-n+1,r} \\
&+  b_{m-1,j}(m-n,j')<\varepsilon^{m-n}_{j'} f^1_{w+1}>_{m-n+1,r}
\end{align*}
holds whenever  Equation~\eqref{main-result} holds. 
\end{lemma}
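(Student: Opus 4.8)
The plan is to follow the template of Lemma~\ref{contralem1}, splitting the statement into its two assertions and observing that both ultimately rest only on the $\Lambda^e$-linearity of the maps involved together with Equation~\eqref{main-result}. Throughout I will use that $\psi_\eta$ and each $<\;>_{\ast,\ast}$ are $\Lambda^e$-module homomorphisms and that $\K_{m-n}$ is a \emph{free} $\Lambda^e$-module on $\{\varepsilon^{m-n}_a\}$, so that all of the left and right multiplications by paths $f^1_p$ that appear are unambiguous.

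First I would establish the commutation $\psi_\eta(<\varepsilon^{m-1}_j>_{m,r}) = <\psi_\eta(\varepsilon^{m-1}_j)>_{m,r}$. Recalling that
\[
<\varepsilon^{m-1}_j>_{m,r} = \sum_{p=0}^{t_1}c_{pj}(m,r,1)\,f^1_p\,\varepsilon^{m-1}_j + (-1)^m\sum_{q=0}^{t_1}c_{jq}(m,r,m-1)\,\varepsilon^{m-1}_j\,f^1_q,
\]
I would apply $\psi_\eta$ and push it past the scalars and past the left and right multiplications by $f^1_p$ and $f^1_q$ using $\Lambda^e$-linearity, exactly as in Lemma~\ref{contralem1}; that is, $f^1_p\,\psi_\eta(\varepsilon^{m-1}_j)=\psi_\eta(f^1_p\,\varepsilon^{m-1}_j)$ and $\psi_\eta(\varepsilon^{m-1}_j)\,f^1_q=\psi_\eta(\varepsilon^{m-1}_j\,f^1_q)$. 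The new feature relative to Lemma~\ref{contralem1} is that $\psi_\eta(\varepsilon^{m-1}_j)$ is now the two-term, length-$2$ expression $b_{m-1,j}(m-n,j'+1)f^1_w\varepsilon^{m-n}_{j'+1}+b_{m-1,j}(m-n,j')\varepsilon^{m-n}_{j'}f^1_{w+1}$; since freeness of $\K_{m-n}$ makes left multiplication by $f^1_p$ and right multiplication by $f^1_q$ well defined on such elements, the same pulling-through argument applies verbatim and yields the claimed equality.

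It then remains to produce the explicit expansion. Substituting the two-term value of $\psi_\eta(\varepsilon^{m-1}_j)$ into $<\,\cdot\,>_{m,r}$ and using the $\Lambda^e$-linearity of $<\;>_{m,r}$ to distribute over the sum and factor out the scalars $b_{m-1,j}(m-n,j'+1)$ and $b_{m-1,j}(m-n,j')$, I obtain
\begin{align*}
<\psi_\eta(\varepsilon^{m-1}_j)>_{m,r} &= b_{m-1,j}(m-n,j'+1)<f^1_w\varepsilon^{m-n}_{j'+1}>_{m,r}\\
&\quad + b_{m-1,j}(m-n,j')<\varepsilon^{m-n}_{j'}f^1_{w+1}>_{m,r}.
\end{align*}
The final step is to replace the subscript $(m,r)$ by $(m-n+1,r)$, and this is where Equation~\eqref{main-result} enters: the scalars controlling $<\;>_{m,r}$ on an index-$j$ basis element, namely $c_{pj}(m,r,1)$ and $(-1)^m c_{jq}(m,r,m-1)$, are forced by \eqref{main-result} to coincide with the scalars $c_{pj'}(m-n+1,r,1)=c_{p,j'+1}(m-n+1,r,1)$ and $(-1)^{m-n+1}c_{j'q}(m-n+1,r,m-n)=(-1)^{m-n+1}c_{j'+1,q}(m-n+1,r,m-n)$ that govern $<\;>_{m-n+1,r}$ on the shifted index-$j'$ and index-$(j'+1)$ basis elements. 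Matching these termwise converts each $<\,\cdot\,>_{m,r}$ into the corresponding $<\,\cdot\,>_{m-n+1,r}$, giving the asserted identity.

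I expect the main obstacle to be bookkeeping rather than any conceptual difficulty. Because $\psi_\eta$ here sends $\varepsilon^{m-1}_j$ to an expression involving the \emph{two} shifted indices $j'$ and $j'+1$ simultaneously, one must check that \emph{both} branches of Equation~\eqref{main-result} are genuinely needed, and that they correctly match the left-multiplication scalars $c_{pj}(m,r,1)$ against $c_{pj'}(m-n+1,r,1)=c_{p,j'+1}(m-n+1,r,1)$ and the right-multiplication scalars against the sign-adjusted equalities above. Keeping the signs $(-1)^m$ versus $(-1)^{m-n+1}$ aligned with the two index shifts across the two length-$2$ terms is the one place where care is required.
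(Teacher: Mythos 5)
Your proposal is correct and follows essentially the same route as the paper: establish the commutation $\psi_\eta(<\varepsilon^{m-1}_j>_{m,r}) = <\psi_\eta(\varepsilon^{m-1}_j)>_{m,r}$ purely from $\Lambda^e$-linearity, substitute the two-term value of $\psi_\eta(\varepsilon^{m-1}_j)$, and then invoke both branches of Equation~\eqref{main-result} — the $j'$ branch for the $\varepsilon^{m-n}_{j'}f^1_{w+1}$ term and the $j'+1$ branch for the $f^1_w\varepsilon^{m-n}_{j'+1}$ term — to convert $<\;\cdot\;>_{m,r}$ into $<\;\cdot\;>_{m-n+1,r}$. Your closing remark correctly identifies the only delicate point (aligning the signs $(-1)^m$ versus $(-1)^{m-n+1}$ across the two shifted indices), which is exactly where the paper's proof spends its effort.
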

\begin{proof}
We use the fact that $\psi_\eta$ and $<\;\cdot\;>_{m,r}$ are $\Lambda^e$-module homomorphism.
Taking $\psi_{\eta} (\varepsilon^{m-1}_j) = b_{m-1,j}(m-n,j'+1)f^1_{w}\varepsilon^{m-n}_{j'+1} + b_{m-1,j}(m-n,j')\varepsilon^{m-n}_{j'} f^1_{w+1}$, we will have
\begin{align*}
& \psi_\eta(<\varepsilon^{m-1}_j>_{m,r})\\
&= \psi_\eta\Big( \sum_{p=0}^{t_1}c_{pj}(m,r,1)f_p^1 \varepsilon^{m-1}_j 
+ (-1)^m \sum_{q=0}^{t_1}c_{jq}(m,r,m-1)\varepsilon^{m-1}_j f_q^1 \Big)\\
&=  \sum_{p=0}^{t_1}c_{pj}(m,r,1)\psi_\eta (f_p^1 \varepsilon^{m-1}_j) 
+ (-1)^m \sum_{q=0}^{t_1}c_{jq}(m,r,m-1)\psi_\eta(\varepsilon^{m-1}_j f_q^1 )\\
&=  \sum_{p=0}^{t_1}c_{pj}(m,r,1) f_p^1 \psi_\eta (\varepsilon^{m-1}_j) 
+ (-1)^m \sum_{q=0}^{t_1}c_{jq}(m,r,m-1)\psi_\eta(\varepsilon^{m-1}_j )f_q^1 = <\psi_\eta(\varepsilon^{m-1}_j)>_{m,r}
\end{align*}
\begin{align*}
&=  \sum_{p=0}^{t_1}c_{pj}(m,r,1) f_p^1 \big[ b_{m-1,j}(m-n,j'+1)f^1_{w}\varepsilon^{m-n}_{j'+1} + b_{m-1,j}(m-n,j')\varepsilon^{m-n}_{j'} f^1_{w+1}\big] \\
&+ (-1)^m \sum_{q=0}^{t_1}c_{jq}(m,r,m-1) \big[ b_{m-1,j}(m-n,j'+1)f^1_{w}\varepsilon^{m-n}_{j'+1} + b_{m-1,j}(m-n,j')\varepsilon^{m-n}_{j'} f^1_{w+1}\big] f_q^1
\end{align*}
\begin{align*}
&=  b_{m-1,j}(m-n,j'+1)\Big[\sum_{p=0}^{t_1}c_{pj}(m,r,1) f_p^1 (f^1_{w}\varepsilon^{m-n}_{j'+1}) + (-1)^m \sum_{q=0}^{t_1}c_{jq}(m,r,m-1)  (f^1_{w}\varepsilon^{m-n}_{j'+1}) f^1_q \Big]\\
&+  b_{m-1,j}(m-n,j')\Big[\sum_{p=0}^{t_1}c_{pj}(m,r,1) f_p^1 (\varepsilon^{m-n}_{j'}f^1_{w+1}) + (-1)^m \sum_{q=0}^{t_1}c_{jq}(m,r,m-1)  (\varepsilon^{m-n}_{j'}f^1_{w+1}) f^1_q \Big].
\end{align*}
We now recall that the first equality of Equation~\eqref{main-result} implies that 
\begin{align*}
&<\varepsilon^{m-n}_{j'} f^1_{w+1}>_{m-n+1,r} = \sum_{p=0}^{t_1}c_{pj'}(m-n+1,r,1)f_p^1 (\varepsilon^{m-n}_{j'}f^1_{w+1}) \\
&+ (-1)^{m-n+1} \sum_{q=0}^{t_1}c_{j'q}(m-n+1,r,m-n)(\varepsilon^{m-n}_{j'}f^1_{w+1}) f_q^1 \\
&= \sum_{p=0}^{t_1}c_{pj}(m,r,1)f_p^1 (\varepsilon^{m-n}_{j'}f^1_{w+1}) + (-1)^{m} \sum_{q=0}^{t_1}c_{jq}(m,r,m-1)(\varepsilon^{m-n}_{j'}f^1_{w+1}) f_q^1,
\end{align*}
and the second part of the equality of Equation~\eqref{main-result} i.e. $c_{pj}(m,r,1) = c_{p,j'+1}(m-n+1,r,1)$ and $(-1)^{m} c_{jq}(m,r,m-1) = (-1)^{m-n+1}c_{j'+1,q}(m-n+1,r,m-n)$ implies that 
\begin{align*}
&<f^1_{w}\varepsilon^{m-n}_{j'+1} >_{m-n+1,r} = \sum_{p=0}^{t_1}c_{p,j'+1}(m-n+1,r,1)f_p^1 (f^1_{w}\varepsilon^{m-n}_{j'+1}) \\
&+ (-1)^{m-n+1} \sum_{q=0}^{t_1}c_{j'+1,q}(m-n+1,r,m-n)(f^1_{w}\varepsilon^{m-n}_{j'+1}) f_q^1 \\
&= \sum_{p=0}^{t_1}c_{pj}(m,r,1)f_p^1 (f^1_{w}\varepsilon^{m-n}_{j'+1}) + (-1)^{m} \sum_{q=0}^{t_1}c_{jq}(m,r,m-1)(f^1_{w}\varepsilon^{m-n}_{j'+1}) f_q^1.
\end{align*}
Putting all these together, we get the desired result:
\begin{align*}
&\psi_\eta(<\varepsilon^{m-1}_j>_{m,r}) = <\psi_\eta(\varepsilon^{m-1}_j)>_{m,r}\\
& = <b_{m-1,j}(m-n,j'+1)f^1_{w}\varepsilon^{m-n}_{j'+1} + b_{m-1,j}(m-n,j')\varepsilon^{m-n}_{j'} f^1_{w+1}>_{m-n+1,r}\\
&= b_{m-1,j}(m-n,j'+1)<f^1_w\varepsilon^{m-n}_{j'+1}>_{m-n+1,r} +  b_{m-1,j}(m-n,j')<\varepsilon^{m-n}_{j'} f^1_{w+1}>_{m-n+1,r}.
\end{align*}
\end{proof}

The differentials map free basis elements $\varepsilon^m_r$ to a linear combination of $f^1_{*}\varepsilon^{m-1}_{**}$ and $\varepsilon^{m-1}_{**} f^1_{*}$ and the map $\psi$ of Definition \ref{general-homo2} maps free basis elements $\varepsilon^{m-1}_{*}$ to linear combination of $f^1_{**}\varepsilon^{m-n}_{***}$ and $\varepsilon^{m-n}_{***} f^1_{**}.$ Combining these two maps means $\psi d$ and $d\psi$ will map free basis elements $\varepsilon^m_r$ to a linear combination of $f^1_{*}f^1_{**}\varepsilon^{m-n}_{***}$,$\displaystyle{ f^1_{*}\varepsilon^{m-n}_{***} f^1_{**}}$,
$\displaystyle{ f^1_{**}\varepsilon^{m-n}_{***}f^1_{*}}$ and $\varepsilon^{m-n}_{***} f^1_{**}f^1_{*}.$ The map $J: \K_m\rightarrow \K_{m-n+1}$ given in the next defiinition describes all the possible $k$-linear combinations there are and the next lemma shows that after suitable substitution of certain scalars, $J = d\psi_\eta -(-1)^{n-1} \psi_\eta d.$

\begin{defi}
For $0\leq r\leq t_{m-n}$, define a map $J: \K_m\rightarrow \K_{m-n}$  on the basis elements $\varepsilon^m_s$ of $\K_m$ by
\begin{equation*}
J(\varepsilon^m_s) = \sum_{r=0}^{t_m}\sum_{i=0}^{t_1}\sum_{j=0}^{t_1}\Big[ \sigma_{m,s}(i,j,r) f^1_i f^1_j \varepsilon^{m-n}_r +  \sigma_{m,s}(i,r,j)  f^1_i \varepsilon^{m-n}_r  f^1_j+  \sigma_{m,s}(r,i,j)  \varepsilon^{m-n}_rf^1_i f^1_j\Big]
\end{equation*}
for some scalars $ \sigma_{m,s}(i,j,r)$ and extend to all of $\K_m$ by requiring it to be a $\Lambda^e$-module homomorphism.
\end{defi}
\begin{lemma}\label{contralem3}
Let $\Lambda = kQ/I$ be a Koszul algebra and $\eta :\K_n\rightarrow \Lambda$ a cocycle. There are scalars  $\sigma_{*,*}(*,*,*)$ such that
\begin{equation}\label{d-homo}
(d\psi_\eta - (-1)^{n-1}\psi_\eta d )(\varepsilon^m_r) = J(\varepsilon^m_r).
\end{equation}
\end{lemma}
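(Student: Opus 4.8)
The plan is to prove \eqref{d-homo} by direct expansion, the point being that every monomial that can possibly occur in $(d\psi_\eta - (-1)^{n-1}\psi_\eta d )(\varepsilon^m_r)$ already has one of the three shapes $f^1_i f^1_j \varepsilon^{m-n}_r$, $f^1_i \varepsilon^{m-n}_r f^1_j$ or $\varepsilon^{m-n}_r f^1_i f^1_j$ appearing in the definition of $J$. The scalars $\sigma_{*,*}(*,*,*)$ are then simply \emph{defined} to be the coefficients that emerge, so the identity becomes tautological once the expansion is organized. No genuinely new idea is needed beyond the observation that $\psi_\eta$ attaches exactly one length-$1$ path to a basis element (by \eqref{general-homo2}) and the differential $d$ attaches exactly one more (by \eqref{diff-k}), so that each composite carries precisely two factors $f^1_\bullet$ flanking a single $\varepsilon^{m-n}_\bullet$.

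First I would expand $d\psi_\eta(\varepsilon^m_r)$. Substituting the special-case formula $\psi_\eta(\varepsilon^m_r) = b_{m,r}(m-n+1,s+1) f^1_w \varepsilon^{m-n+1}_{s+1} + b_{m,r}(m-n+1,s)\varepsilon^{m-n+1}_s f^1_{w+1}$ and using that $d$ is a $\Lambda^e$-module map, I pull the flanking paths $f^1_w,f^1_{w+1}$ outside the differential and apply \eqref{diff-k} to $\varepsilon^{m-n+1}_{s+1}$ and $\varepsilon^{m-n+1}_s$. Since \eqref{diff-k} writes each $d(\varepsilon^{m-n+1}_\bullet)$ as a $k$-combination of $f^1_p \varepsilon^{m-n}_j$ and $\varepsilon^{m-n}_j f^1_q$, the first summand produces the monomials $f^1_w f^1_p \varepsilon^{m-n}_j$ and $f^1_w \varepsilon^{m-n}_j f^1_q$, while the second produces $f^1_p \varepsilon^{m-n}_j f^1_{w+1}$ and $\varepsilon^{m-n}_j f^1_q f^1_{w+1}$, all of the three $J$-shapes.

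Next I would expand $\psi_\eta d(\varepsilon^m_r)$. Using that the differential factors through the maps $<\;>_{m,r}$, namely $d(\varepsilon^m_r)=\sum_{j}<\varepsilon^{m-1}_j>_{m,r}$, together with Lemma~\ref{contralem2}, which under Equation~\eqref{main-result} gives $\psi_\eta(<\varepsilon^{m-1}_j>_{m,r})=<\psi_\eta(\varepsilon^{m-1}_j)>_{m,r}$, I would replace $\psi_\eta(\varepsilon^{m-1}_j)$ by its two-term expression and expand each $<f^1_w\varepsilon^{m-n}_{j'+1}>_{m-n+1,r}$ and $<\varepsilon^{m-n}_{j'}f^1_{w+1}>_{m-n+1,r}$ by the definition of $<\;>$. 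Because that map prepends or appends a single length-$1$ path, the terms again take exactly the forms $f^1_p f^1_w\varepsilon^{m-n}_{j'+1}$, $f^1_w\varepsilon^{m-n}_{j'+1}f^1_q$, $f^1_p\varepsilon^{m-n}_{j'}f^1_{w+1}$ and $\varepsilon^{m-n}_{j'}f^1_{w+1}f^1_q$. I would then form the difference $d\psi_\eta(\varepsilon^m_r)-(-1)^{n-1}\psi_\eta d(\varepsilon^m_r)$, collect the coefficient of each monomial, and define $\sigma_{m,r}(i,j,r)$, $\sigma_{m,r}(i,r,j)$, $\sigma_{m,r}(r,i,j)$ to be these coefficients, which yields \eqref{d-homo}.

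The main obstacle is organizational rather than conceptual: keeping track of the index shift $j\mapsto j'$ and $j'\mapsto j'+1$ induced by $\psi_\eta$ (as recorded in \eqref{psipsi}) and of the accumulated signs $(-1)^{n-1}$, $(-1)^m$ and $(-1)^{m-n+1}$ coming from the differential and the Koszul convention. One must also note that only monomials with composable paths survive in $\Lambda$, so many of the $\sigma$'s are forced to vanish; but this merely restricts the coefficients and never introduces a term outside the span defining $J$, which is precisely what the lemma asserts.
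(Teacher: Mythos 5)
Your proposal is correct and follows essentially the same route as the paper: substitute the two-term special-case formula for $\psi_\eta$, use $\Lambda^e$-linearity and the factorization $d(\varepsilon^m_r)=\sum_j<\varepsilon^{m-1}_j>_{m,r}$ together with Lemma~\ref{contralem2} to expand both composites into monomials of the three $J$-shapes, and then define the scalars $\sigma_{*,*}(*,*,*)$ as the resulting coefficients. The paper's proof is exactly this computation, organized into the labelled terms (a1)--(d2) and collected into the coefficients $A_\alpha$, $B_\alpha$, $C_\beta$, $D_\beta$ before the final substitution.
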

\begin{proof}
We begin the proof with direct evaluation of these maps on the free basis elements.
\begin{align*}
&(d\psi_\eta - (-1)^{n-1}\psi_\eta d) (\varepsilon^m_r) = d\psi_\eta (\varepsilon^m_r) - (-1)^{n-1} \psi_\eta d (\varepsilon^m_r) \\
&= d\Big(b_{m,r}(m-n+1,s+1)f^1_{w}\varepsilon^{m-n+1}_{s+1} + b_{m,r}(m-n+1,s)\varepsilon^{m-n+1}_{s} f^1_{w+1}\Big)  \\
& -  (-1)^{n-1}\psi_\eta( \sum_{j=0}^{t_{m-1}}<\varepsilon^{m-1}_j>_{m,r})\\
&= b_{m,r}(m-n+1,s+1)f^1_{w}d(\varepsilon^{m-n+1}_{s+1}) + b_{m,r}(m-n+1,s)d(\varepsilon^{m-n+1}_{s}) f^1_{w+1} \\
& - (-1)^{n-1} \sum_{j=0}^{t_{m-1}}\psi_\eta(<\varepsilon^{m-1}_j>_{m,r}).
\end{align*}
Since the map $\psi_\eta$ takes basis elements $\{\varepsilon^{m-1}_j\}_{j=0}^{t_{m-1}}$ of $\K_{m-1}$ to basis elements $\{\varepsilon^{m-n}_{j'}\}_{j'=0}^{t_{m-n}}$ of $\K_{m-n}$, the index $j$ shifts to $j'$ over the sum. Also apply the result of 
Lemma~\ref{contralem2} to obtain 
\begin{multline*}
 b_{m,r}(m-n+1,s+1)f^1_{w} \sum_{\alpha=0}^{t_{m-n}}<\varepsilon^{m-n}_\alpha>_{m-n+1,s+1}  \\
+  b_{m,r}(m-n+1,s) \sum_{\beta=0}^{t_{m-n}}<\varepsilon^{m-n}_\beta>_{m-n+1,s} f^1_{w+1}\\
 - (-1)^{n-1} \sum_{j'=0}^{t_{m-n}} b_{m-1,j}(m-n,j'+1)<f^1_w\varepsilon^{m-n}_{j'+1}>_{m-n+1,r} \\
+  b_{m-1,j}(m-n,j')<\varepsilon^{m-n}_{j'} f^1_{w+1}>_{m-n+1,r}
\end{multline*}
Applying the definition of $<\varepsilon^{*}_{*}>_{*,*}$ at the appropriate places, we obtain
\begin{multline*}
 b_{m,r}(m-n+1,s+1)f^1_{w} \sum_{\alpha=0}^{t_{m-n}}\Big[  \sum_{p=0}^{t_1}c_{p\alpha}(m-n+1,s+1,1)f_p^1 \varepsilon^{m-n}_{\alpha} \\
+ (-1)^{m-n+1} \sum_{q=0}^{t_1}c_{\alpha q}(m-n+1,s+1,m-n)\varepsilon^{m-n}_{\alpha} f_q^1\Big]\\
+  b_{m,r}(m-n+1,s) \sum_{\beta=0}^{t_{m-n}}\Big[ \sum_{p=0}^{t_1}c_{p\beta}(m-n+1,s,1)f_p^1 \varepsilon^{m-n}_{\beta} 
\\+ (-1)^{m-n+1} \sum_{q=0}^{t_1}c_{\beta q}(m-n+1,s,m-n)\varepsilon^{m-n}_{\beta} f_q^1 \Big] f^1_{w+1}
\end{multline*}
\begin{multline*}
 - (-1)^{n-1} \sum_{j'=0}^{t_{m-n}} b_{m-1,j}(m-n,j'+1) f^1_w \Big[  \sum_{p=0}^{t_1}c_{p,j'+1}(m-n+1,r,1)f_p^1 \varepsilon^{m-n}_{j'+1} 
\\+ (-1)^{m-n+1} \sum_{q=0}^{t_1}c_{j'+1, q}(m-n+1,r,m-n)\varepsilon^{m-n}_{j'+1} f_q^1\Big] \\
- (-1)^{n-1} \sum_{j'=0}^{t_{m-n}}  b_{m-1,j}(m-n,j')\Big[ \sum_{p=0}^{t_1}c_{pj'}(m-n+1,r,1)f_p^1 \varepsilon^{m-n}_{j'}
\\+ (-1)^{m-n+1} \sum_{q=0}^{t_1}c_{j' q}(m-n+1,r,m-n)\varepsilon^{m-n}_{j'} f_q^1 \Big] f^1_{w+1}.
\end{multline*}
After re-arranging, and bringing together like terms we get
\begin{align*}
&=\tag{a1}\sum_{\alpha=0}^{t_{m-n}} \sum_{p=0}^{t_1}b_{m,r}(m-n+1,s+1) c_{p\alpha}(m-n+1,s+1,1) f^1_{w}f_p^1 \varepsilon^{m-n}_{\alpha} \\
&\tag{a2} - (-1)^{n-1}\sum_{j'=0}^{t_{m-n}} \sum_{p=0}^{t_1}  b_{m-1,j}(m-n,j'+1)  c_{p,j'+1}(m-n+1,r,1) f^1_w f_p^1 \varepsilon^{m-n}_{j'+1}
\end{align*}
\begin{align*}
&\tag{b1}+(-1)^{m-n+1}\sum_{\alpha=0}^{t_{m-n}} \sum_{q=0}^{t_1} b_{m,r}(m-n+1,s+1) c_{\alpha q}(m-n+1,s+1,m-n) f^1_{w} \varepsilon^{m-n}_{\alpha} f_q^1  \\
&\tag{b2}- (-1)^{m} \sum_{j'=0}^{t_{m-n}} \sum_{q=0}^{t_1}  b_{m-1,j}(m-n,j'+1)  c_{j'+1, q}(m-n+1,r,m-n) f^1_w \varepsilon^{m-n}_{j'+1} f_q^1
\end{align*}
\begin{align*} 
&\tag{c1}+\sum_{\beta=0}^{t_{m-n}} \sum_{p=0}^{t_1} b_{m,r}(m-n+1,s) c_{p\beta}(m-n+1,s,1)f_p^1 \varepsilon^{m-n}_{\beta}f^1_{w+1} \\
&\tag{c2}- (-1)^{n-1} \sum_{j'=0}^{t_{m-n}} \sum_{p=0}^{t_1}b_{m-1,j}(m-n,j') c_{pj'}(m-n+1,r,1)f_p^1 \varepsilon^{m-n}_{j'}f^1_{w+1}
\end{align*}
\begin{align*}
&\tag{d1}+ (-1)^{m-n+1}\sum_{\beta=0}^{t_{m-n}} \sum_{q=0}^{t_1} b_{m,r}(m-n+1,s) c_{\beta q}(m-n+1,s,m-n)\varepsilon^{m-n}_{\beta} f_q^1 f^1_{w+1}  \\
&\tag{d2}(-1)^{m}\sum_{j'=0}^{t_{m-n}} \sum_{q=0}^{t_1} b_{m-1,j}(m-n,j') c_{j' q}(m-n+1,r,m-n)\varepsilon^{m-n}_{j'} f_q^1  f^1_{w+1} .
\end{align*}
Next we combine Equations (a1) and (a2) and re-index $\alpha=j'+1$, combine Equations (b1) and (b2) and re-index $\alpha=j'+1$ and so on to obtain 
\begin{align}\label{d-j-eq}
& \sum_{\alpha=0}^{t_{m-n}} \sum_{p=0}^{t_1}\Big[ A_{\alpha} \Big]f^1_{w}f_p^1 \varepsilon^{m-n}_{\alpha} + \sum_{\alpha=0}^{t_{m-n}} \sum_{q=0}^{t_1}\Big[ B_{\alpha} \Big]f^1_{w}\varepsilon^{m-n}_{\alpha}f^1_{q}  \notag
\\
&+\sum_{\beta=0}^{t_{m-n}} \sum_{p=0}^{t_1}\Big[ C_{\beta} \Big]f^1_p\varepsilon^{m-n}_{\beta}f^1_{w+1}
 + \sum_{\beta=0}^{t_{m-n}} \sum_{q=0}^{t_1}\Big[ D_{\beta} \Big]\varepsilon^{m-n}_{\beta}f^1_{q}f^1_{w+1}
\end{align}
where 
\begin{multline*}
A_{\alpha} =  b_{m,r}(m-n+1,s+1) c_{p\alpha}(m-n+1,s+1,1)\\
 + (-1)^{n} b_{m-1,j}(m-n,\alpha)  c_{p,\alpha}(m-n+1,r,1)\\
B_{\alpha} =   (-1)^{m-n+1} b_{m,r}(m-n+1,s+1) c_{\alpha q}(m-n+1,s+1,m-n)\\
 -(-1)^{m} b_{m-1,j}(m-n,\alpha) c_{\alpha,q}(m-n+1,r,m-n)\\
C_{\beta} =   b_{m,r}(m-n+1,s) c_{p \beta}(m-n+1,s,1)\qquad\qquad\qquad\\
 +(-1)^{n} b_{m-1,j}(m-n,\beta)  c_{p \alpha}(m-n+1,r,1)\\
D_{\beta} =   (-1)^{m-n+1} b_{m,r}(m-n+1,s) c_{\beta q}(m-n+1,s,m-n)\\
 +(-1)^{m}b_{m-1,j}(m-n,\beta)  c_{\beta,q}(m-n+1,r,m-n).
\end{multline*}

In the last expression that is  Equation \eqref{d-j-eq}, substitute
$$  \begin{cases}
 \sigma_{m,r}(i,j,r)  = A_{\alpha} &\;{\rm when}\;i=w\; (fixed), j=p, r=\alpha, 0\leq r\leq t_{m-n},\;0\leq j \leq t_1 \\
 \sigma_{m,r}(i,r,j)  = B_{\alpha} &\;{\rm when}\;i=w\;(fixed), j=q, r=\alpha, 0\leq r \leq t_{m-n},\;0\leq j \leq t_1 \\
 \sigma_{m,r}(i,r,j)  = C_{\beta} &\;{\rm when}\;\;i=p, j=w+1\;(fixed), r=\beta, 0\leq r\leq t_{m-n},\;0\leq i \leq t_1 \\
 \sigma_{m,r}(r,i,j)  = D_{\beta} &\;{\rm when}\;\;i=q, j=w+1\;(fixed), r=\beta, 0\leq r\leq t_{m-n},\;0\leq i \leq t_1 \\ \end{cases} $$
and $0$ otherwise. We obtain the desired result 
\begin{equation*}
(d\psi_\eta - (-1)^{n-1}\psi_\eta d )(\varepsilon^m_r) =  J(\varepsilon^m_r).
\end{equation*}
\end{proof}

\begin{theo}\label{homotopylifting2}
Let $\Lambda = kQ/I$ be a Koszul algebra. Suppose $\eta :\K_n\rightarrow \Lambda$ is a cocycle such that $\eta = \begin{pmatrix} 0 & \cdots & 0 & (f^1_wf^1_{w+1})^{(i)} & 0 & \cdots  & 0\end{pmatrix}$ for some $0\leq w\leq t_1$. A homotopy lifting map $\psi_\eta :\K_{m} \rightarrow \K_{m-n+1}$ associated to $\eta$ can be defined by 
$$\psi_\eta (\varepsilon^m_r) = b_{m,r}(m-n+1,s+1)f^1_{w}\varepsilon^{m-n+1}_{s+1} + b_{m,r}(m-n+1,s)\varepsilon^{m-n+1}_s f^1_{w+1}$$
for some scalars $b_{m,r}(m-n+1,s).$ Moreover, the scalars $b_{m,r}(m-n+1,s)$ satisfy
\begin{itemize}
\item[(i)]
\begin{multline*} A_{\alpha} = \begin{cases} c_{i,\alpha}(m,r,1), & {\rm if}\; p=w+1 \\ 0, &{\rm if}\;p\neq w+1, \end{cases},\\
D_{\beta} = \begin{cases}(-1)^{n(m-n)+1} c_{\beta,i}(m,r,m-n), & {\rm if}\; q=w \\ 0, &{\rm if}\;q\neq w \end{cases}
\end{multline*}
\item[(ii).] $B_{\alpha} =  0$ and  $C_{\beta} = 0$ for all $ \alpha$ and $\beta$ where 
\end{itemize}
\begin{multline*}
A_{\alpha} =  b_{m,r}(m-n+1,s+1) c_{p\alpha}(m-n+1,s+1,1)\\
 + (-1)^{n} b_{m-1,j}(m-n,\alpha)  c_{p,\alpha}(m-n+1,r,1),\\
B_{\alpha} =   (-1)^{m-n+1} b_{m,r}(m-n+1,s+1) c_{\alpha q}(m-n+1,s+1,m-n)\\
 -(-1)^{m} b_{m-1,j}(m-n,\alpha) c_{\alpha,q}(m-n+1,r,m-n),\\
C_{\beta} =   b_{m,r}(m-n+1,s) c_{p \beta}(m-n+1,s,1)\qquad\qquad\qquad\\
 +(-1)^{n} b_{m-1,j}(m-n,\beta)  c_{p \alpha}(m-n+1,r,1) \text{ and }\\
D_{\beta} =   (-1)^{m-n+1} b_{m,r}(m-n+1,s) c_{\beta q}(m-n+1,s,m-n)\\
 +(-1)^{m}b_{m-1,j}(m-n,\beta)  c_{\beta,q}(m-n+1,r,m-n).
\end{multline*}
\end{theo}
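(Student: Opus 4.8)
The plan is to verify only the first equation of Definition~\ref{homolift}: by Remark~\ref{rema-koszul-homo} the Koszul property makes $\K$ a differential graded coalgebra, so choosing $\psi = 0$ and $\psi_\eta(\K_{n-1}) = 0$ automatically satisfies the second hypothesis. Hence it suffices to establish
\[
(d\psi_\eta - (-1)^{n-1}\psi_\eta d)(\varepsilon^m_r) = (\eta\ot 1 - 1\ot\eta)\Delta_{\K}(\varepsilon^m_r).
\]
First I would invoke Lemma~\ref{contralem3}, which already rewrites the left-hand side as $J(\varepsilon^m_r)$ with the four families of coefficients $A_\alpha, B_\alpha, C_\beta, D_\beta$ attached respectively to the monomials $f^1_w f^1_p \varepsilon^{m-n}_\alpha$, $f^1_w \varepsilon^{m-n}_\alpha f^1_q$, $f^1_p \varepsilon^{m-n}_\beta f^1_{w+1}$, and $\varepsilon^{m-n}_\beta f^1_q f^1_{w+1}$. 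This reduces the theorem to comparing $J(\varepsilon^m_r)$ against the comultiplicative right-hand side.

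Next I would compute the right-hand side directly from the comultiplication formula~\eqref{diag-k}. Applying $\eta\ot 1 - 1\ot\eta$ and using that $\eta$ vanishes on every basis element except $\varepsilon^n_i$, where $\eta(\varepsilon^n_i) = f^1_w f^1_{w+1}$, only the summands with $v = n,\, p = i$ (in the $\eta\ot 1$ part) and $m-v = n,\, q = i$ (in the $1\ot\eta$ part) survive. After applying the Koszul sign convention, exactly as in the proof of Theorem~\ref{homotopylifting1}, this collapses to
\[
\sum_{\alpha=0}^{t_{m-n}} c_{i,\alpha}(m,r,n)\, f^1_w f^1_{w+1}\varepsilon^{m-n}_\alpha - (-1)^{n(m-n)}\sum_{\alpha=0}^{t_{m-n}} c_{\alpha,i}(m,r,m-n)\,\varepsilon^{m-n}_\alpha f^1_w f^1_{w+1},
\]
a sum containing only the two monomial shapes $f^1_w f^1_{w+1}\varepsilon^{m-n}_\alpha$ and $\varepsilon^{m-n}_\alpha f^1_w f^1_{w+1}$.

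The final step is to equate the two expressions monomial by monomial. The $A_\alpha$ family produces a monomial of the form $f^1_w f^1_{w+1}\varepsilon^{m-n}_\alpha$ precisely when $p = w+1$; demanding agreement there and vanishing otherwise yields the stated condition on $A_\alpha$, while the $D_\beta$ family meets $\varepsilon^{m-n}_\alpha f^1_w f^1_{w+1}$ only when $q = w$, giving the condition on $D_\beta$ with the extra sign $(-1)^{n(m-n)+1}$ absorbing the sign in front of the second sum. Since the right-hand side carries no monomial of the mixed shapes $f^1_w\varepsilon^{m-n}_\alpha f^1_q$ or $f^1_p\varepsilon^{m-n}_\beta f^1_{w+1}$, matching forces $B_\alpha = 0$ and $C_\beta = 0$ for all $\alpha,\beta$, which is condition (ii). I expect the main obstacle to be the careful sign bookkeeping through the Koszul convention together with the re-indexing $\alpha = j'+1$ already performed in Lemma~\ref{contralem3}, and, more substantively, confirming that the cross-term vanishing in (ii) is genuinely compatible with the diagonal matching in (i), i.e.\ that the resulting linear system in the scalars $b_{m,r}(m-n+1,s)$ and $b_{m-1,j}(m-n,\cdot)$ is consistent rather than over-determined---consistency being underwritten by Volkov's general existence guarantee in Theorem~\ref{gbrac1}.
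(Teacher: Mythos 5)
Your proposal follows essentially the same route as the paper: it reduces the second condition of Definition~\ref{homolift} to the Koszul/coalgebra observation of Remark~\ref{rema-koszul-homo}, invokes Lemma~\ref{contralem3} to write the left-hand side as $J(\varepsilon^m_r)$ with the coefficient families $A_\alpha, B_\alpha, C_\beta, D_\beta$, computes $(\eta\ot 1 - 1\ot\eta)\Delta_{\K}(\varepsilon^m_r)$ by isolating the surviving summands $v=n,\,p=i$ and $m-v=n,\,q=i$, and then matches the two sides term by term, which is exactly the substitution of conditions (i) and (ii) performed in the paper. The only cosmetic difference is that you phrase the matching as deriving the conditions on the scalars rather than substituting them, and you (reasonably) record the third argument of the surviving coefficient as $n$ where the paper writes $1$; the argument is otherwise the same.
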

\begin{proof}
We have already established from Lemma~\ref{contralem3} that
$$(d\psi_\eta - (-1)^{n-1}\psi_\eta d )(\varepsilon^m_r) =  J(\varepsilon^m_r)$$
Applying definition (ii) of the theorem, that is substitute $B_\alpha=0$ and $C_\beta=0$ into Equation~\ref{d-j-eq} of Lemma~\ref{contralem3}. We get
\begin{align*}
&(d\psi_\eta - (-1)^{n-1}\psi_\eta d )(\varepsilon^m_r) \\
 &= \sum_{\alpha=0}^{t_{m-n}} \sum_{p=0}^{t_1}\Big[ A_{\alpha} \Big]f^1_{w}f_p^1 \varepsilon^{m-n}_{\alpha} + \sum_{\beta=0}^{t_{m-n}} \sum_{q=0}^{t_1}\Big[ D_{\beta} \Big]\varepsilon^{m-n}_{\beta}f^1_{q}f^1_{w+1}.
\end{align*}
After applying the definition of $A_\alpha$ and $D_\beta$ of (i) of the theorem into the above expression, we obtain
\begin{align*}
& \sum_{\alpha=0}^{t_{m-n}}c_{i,\alpha}(m,r,1) f^1_wf^1_{w+1}\varepsilon^{m-n}_{\alpha} -  (-1)^{n(m-n)}\sum_{\beta=0}^{t_{m-n}}c_{\beta,i}(m,r,m-n)\varepsilon^{m-n}_{\beta} f^1_w f^1_{w+1}.
\end{align*}

On the other hand, using the multiplicative structure on $\K$, we get \\
$\displaystyle{\Delta_{\K}(\varepsilon^m_r) = \sum_{v=0}^{m}\sum_{x=0}^{t_v}\sum_{y=0}^{t_{m-v}}c_{x,y}(m,r,v)\varepsilon^v_x\ot_{\Lambda}\varepsilon^{m-v}_y.}$ Applying $(\eta\ot 1 - 1\ot \eta )$, we obtain
\begin{multline*}
(\eta\ot 1 - 1\ot \eta )\Delta_{\K} (\varepsilon^m_r) = \sum_{v=0}^{m}\sum_{x=0}^{t_v}\sum_{y=0}^{t_{m-v}}c_{x,y}(m,r,v)(\eta\ot 1)(\varepsilon^v_x\ot_{\Lambda}\varepsilon^{m-v}_y)\\
 -\sum_{v=0}^{m}\sum_{x=0}^{t_v}\sum_{y=0}^{t_{m-v}}c_{x,y}(m,r,v) (1\ot \eta )(\varepsilon^v_x\ot_{\Lambda}\varepsilon^{m-v}_y)
\end{multline*}
whenever $v=n, x=i$ in the first part and $m-n=v, y=i$ in the second part, the above expression will yield

\begin{align*}
&= \sum_{y=0}^{t_{m-n}}c_{i,y}(m,r,1) (\eta\ot 1 )(\varepsilon^n_i\ot_{\Lambda}\varepsilon^{m-n}_y)  - \sum_{x=0}^{t_{m-n}}c_{x,i}(m,r,m-n) (1\ot \eta )(\varepsilon^{m-n}_x\ot_{\Lambda}\varepsilon^{n}_i)\\
&= \sum_{y=0}^{t_{m-n}}c_{i,y}(m,r,1)\eta(\varepsilon^n_i)\varepsilon^{m-n}_y -  (-1)^{n(m-n)}\sum_{x=0}^{t_{m-n}}c_{x,i}(m,r,m-n)\varepsilon^{m-n}_x\eta(\varepsilon^{n}_i)
\end{align*}
which after applying the definition of $\eta$ and re-indexing, we get
\begin{align*}
&= \sum_{y=0}^{t_{m-n}}c_{i,y}(m,r,1) f^1_wf^1_{w+1}\varepsilon^{m-n}_y -  (-1)^{n(m-n)}\sum_{x=0}^{t_{m-n}}c_{x,i}(m,r,m-n)\varepsilon^{m-n}_x f^1_w f^1_{w+1}
\end{align*}
\end{proof}

The following theorem gives a combinatorial description of what we obtain when the Gerstenhaber bracket of any two Hochschild cochains is applied to free basis elements.
\begin{theo}\label{Gerstenbrack}
Suppose that $\eta:\K_n\rightarrow\Lambda$ and $\theta:\K_m\rightarrow\Lambda$ represent elements in $\HH^*(\Lambda)$ and are given by  $ \eta = \begin{pmatrix} 0 & \cdots & 0 & (\lambda_i)^{(i)} & 0 & \cdots  & 0\end{pmatrix}$ and $ \theta = \begin{pmatrix} 0 & \cdots & 0 & (\lambda_j)^{(j)} & 0 & \cdots  & 0\end{pmatrix}$ for fixed $i,j$ where $0\leq i\leq t_n$ and $0\leq j\leq t_m.$ Then the bracket $[\eta, \theta] : \K_{n+m-1}\rightarrow\Lambda$ has the property that
\begin{align*}
[\eta, \theta](\varepsilon^{m+n-1}_r) \in &\begin{cases} kQ_1 &{\rm if}\;\lambda_i=f^1_i\;{\rm and }\;\lambda_j=f^1_j, \\
 kQ_2&{\rm if}\;\lambda_i=f^1_{i}f^1_{i+1}\;{\rm and}\;\lambda_j=f^1_j \\
kQ_3&{\rm if}\;\lambda_i=f^1_{i}f^1_{i+1}\;{\rm and }\;\lambda_j=f^1_j f^1_{j+1}.
\end{cases}
\end{align*}
\end{theo}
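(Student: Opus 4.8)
\emph{Proof proposal.} The plan is to invoke Volkov's formula from Theorem~\ref{gbrac1}, which on the resolution $\K$ reads
\[
[\eta,\theta] = \eta\psi_\theta - (-1)^{(m-1)(n-1)}\theta\psi_\eta ,
\]
and then to read off the homogeneous degree of each summand from the explicit shapes of the homotopy liftings produced in Theorems~\ref{homotopylifting1} and~\ref{homotopylifting2}. Write $a$ for the length of the path $\lambda_i$ and $b$ for the length of $\lambda_j$, so that the three rows of the statement correspond to $(a,b)=(1,1),(2,1),(2,2)$. Since $\eta$ and $\theta$ are $\Lambda^e$-module homomorphisms and $\Lambda$ is graded, it suffices to show that each of $\eta\psi_\theta(\varepsilon^{m+n-1}_r)$ and $\theta\psi_\eta(\varepsilon^{m+n-1}_r)$ is homogeneous of length $a+b-1$; their difference then lies in $kQ_{a+b-1}$. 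Note that only \emph{membership} in a graded piece is asserted, so the sign $(-1)^{(m-1)(n-1)}$ and any possible cancellation between the two summands are irrelevant, and a degree count alone will settle the claim.

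The key observation is a length bookkeeping for the homotopy liftings. By Theorem~\ref{homotopylifting1}, a cocycle whose value is a path of length $1$ has a homotopy lifting sending each basis element $\varepsilon^M_r$ to a scalar multiple of a single basis element, contributing a path factor of length $0$; by Theorem~\ref{homotopylifting2}, a cocycle whose value is a path of length $2$ has a homotopy lifting sending $\varepsilon^M_r$ to a $k$-linear combination of terms $f^1_*\varepsilon^{*}_*$ and $\varepsilon^{*}_* f^1_*$, contributing a path factor of length $1$. In both cases a cocycle whose value has length $\ell$ yields a homotopy lifting that attaches a path factor of length $\ell-1$ to a basis element.

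With this in hand I would evaluate the two summands on $\varepsilon^{m+n-1}_r$ directly. The map $\psi_\theta$ lands in $\K_{(m+n-1)-m+1}=\K_n$ and, by the count above, outputs (a path of length $b-1$) times a basis element of $\K_n$; applying the $\Lambda^e$-linear map $\eta$ replaces that basis element by $\lambda_i$ (length $a$) or by $0$, so $\eta\psi_\theta(\varepsilon^{m+n-1}_r)$ is a sum of paths of length exactly $(b-1)+a=a+b-1$. Symmetrically, $\psi_\eta$ lands in $\K_{(m+n-1)-n+1}=\K_m$, outputs (a path of length $a-1$) times a basis element, and composing with $\theta$ gives paths of length $(a-1)+b=a+b-1$. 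Hence both terms, and therefore the bracket, are homogeneous of length $a+b-1$, which specializes to $1,2,3$ in the three respective cases.

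The only genuine subtlety to record is that the path factor produced by $\psi_\theta$ (respectively $\psi_\eta$) passes through $\eta$ (respectively $\theta$) without changing its length; this is exactly the $\Lambda^e$-linearity $\eta(f^1_*\,x)=f^1_*\,\eta(x)$ and $\eta(x\,f^1_*)=\eta(x)\,f^1_*$, together with the fact that $\eta,\theta$ send basis elements to homogeneous elements, so no degree can be lost. I would also remark that the homotopy liftings used here exist in the stated forms precisely because the scalar hypotheses of Theorems~\ref{homotopylifting1} and~\ref{homotopylifting2} (notably Equation~\eqref{main-result}) are assumed; granting those, the argument is purely a tracking of path lengths and requires no computation with the structure constants $c_{pq}$ or the scalars $b_{m,r}$.
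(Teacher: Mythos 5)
Your proposal is correct and follows essentially the same route as the paper: both invoke Volkov's formula $[\eta,\theta]=\eta\psi_\theta-(-1)^{(m-1)(n-1)}\theta\psi_\eta$ and then substitute the explicit homotopy liftings from Theorems~\ref{homotopylifting1} and~\ref{homotopylifting2}. The only difference is presentational — you organize the computation as a uniform length count $(\ell-1)+(\text{other length})=a+b-1$, whereas the paper writes out the three cases and the surviving terms explicitly.
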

\begin{proof} Using the definition of Gerstenhaber bracket of Definition \ref{gbrac1}, we get
\begin{align*}
[\eta, \theta](\varepsilon^{m+n-1}_r) &= (\eta \psi_\theta - (-1)^{(m-1)(n-1)}\theta\psi_\eta)(\varepsilon^{m+n-1}_r)\\
&= \eta\psi_\theta(\varepsilon^{m+n-1}_r) -(-1)^{(m-1)(n-1)}\theta\psi_\eta (\varepsilon^{m+n-1}_r)
\end{align*}
We now apply the definition of a homotopy lifting map as given in Theorems~\ref{homotopylifting1} and~\ref{homotopylifting2}.\\
(1) Suppose that both are paths of length 1, i.e. $\lambda_i=f^1_w, \lambda_j=f^1_p$. We will get $\eta(b_{m+n-1,r}(n,r'')\cdot\varepsilon^{n}_{r''})  - (-1)^{(m-1)(n-1)} \theta (b_{m+n-1,r}(m,r')\cdot\varepsilon^{m}_{r'})$. This expression will give $0$ or a non-zero path. We are interested in the non-zero case i.e. when $r''=i$ (or $\eta(\varepsilon^n_{r''})=\lambda_{i}$) and $r'=j$ (or $\theta(\varepsilon^m_{r'})=\lambda_{j}$). This yields $b_{m+n-1,r}(n,i)\lambda_{i}  -  (-1)^{(m-1)(n-1)} b_{m+n-1,r}(m,j)\lambda_{j}$ which is a combination of paths of length 1.\\
(2) Suppose that one of them is a path of length 2, i.e.$\lambda_i=f^1_{w} f^1_{w+1},\;\lambda_j=f^1_p.$ The expression will yield $\eta(b_{m+n-1,r}(n,r'')\cdot\varepsilon^{n}_{r''})  - (-1)^{(m-1)(n-1)} \theta [ b_{m+n-1,r}(m,r'+1)f^1_{w}\cdot\varepsilon^{m}_{r'+1} + b_{m+n-1,r}(m,r')\cdot\varepsilon^{m}_{r'} f^1_{w+1}].$ Now consider cases in which we get non-zero paths:
$$\begin{cases}  
b_{m+n-1,r}(n,i)\lambda_{i}  -  (-1)^{(m-1)(n-1)} b_{m+n-1,r}(m,j)f^1_w\lambda_{j}  &{\rm if}\;r''=i, r'+1=j \\
 b_{m+n-1,r}(n,i)\lambda_{i}  -  (-1)^{(m-1)(n-1)} b_{m+n-1,r}(m,j)\lambda_{j}f^1_{w+1} &{\rm if}\;r''=i, r'=j
\end{cases}$$
which are combination of paths of lenth 2.\\
(3) Suppose that both are paths of length 2. Let $\lambda_i=f^1_{w}f^1_{w+1} ,\;\lambda_j=f^1_{p} f^1_{p+1},$ we obtain for the bracket expression
\begin{multline*}
\eta[ b_{m+n-1,r}(n,r''+1)f^1_{p}\cdot\varepsilon^{n}_{r''+1} + b_{m+n-1,r}(n,r'')\cdot\varepsilon^{n}_{r''} f^1_{p+1}]  \\
- (-1)^{(m-1)(n-1)} \theta [ b_{m+n-1,r}(m,r'+1)f^1_{w}\cdot\varepsilon^{m}_{r'+1} + b_{m+n-1,r}(m,r')\cdot\varepsilon^{m}_{r'} f^1_{w+1} ].
\end{multline*}
After eliminating the cases which becomes 0, what we get is 
$$\begin{cases} 
 b_{m+n-1,r}(n,i)f^1_p\lambda_{i}  -  (-1)^{(m-1)(n-1)} b_{m+n-1,r}(m,j)f^1_w \lambda_{j} &{\rm if}\;r''+1=i, r'+1=j\\
 b_{m+n-1,r}(n,i)f^1_p\lambda_{i}  -  (-1)^{(m-1)(n-1)} b_{m+n-1,r}(m,j)\lambda_{j}f^1_{w+1} &{\rm if}\;r''+1=i, r'=j\\
  b_{m+n-1,r}(n,i)\lambda_{i} f^1_p  -  (-1)^{(m-1)(n-1)} b_{m+n-1,r}(m,j) f^1_{w}\lambda_{j}&{\rm if}\;r''=i, r'+1=j\\
b_{m+n-1,r}(n,i)\lambda_{i}f^1_{p+1}  -  (-1)^{(m-1)(n-1)} b_{m+n-1,r}(m,j)\lambda_{j} f^1_w &{\rm if}\;r''=i, r'=j
\end{cases}$$
which are all linear combination of paths of length 3.
\end{proof}

Any cocycle $\eta$ of degree $n$ can be thought of as a sum of maps $\eta = \sum_{i=0}^{t_n} \eta^{(i)}$ where $ \eta^{(i)}$ is the map taking the $i$-th basis element $\varepsilon^n_i$ to $\lambda,$ a non-zero element of the algebra, and all other basis elements $\varepsilon^n_s$ to $0$, $i\neq s$. Consistent with our notation, this map is written $\eta^{(i)}= \begin{pmatrix} 0 & \cdots & 0 & (\lambda)^{(i)} & 0 & \cdots  & 0\end{pmatrix},$ and we use this notation in the following theorem.

\begin{theo}\label{Gerstenbrack1}
Let $\eta:\K_n\rightarrow\Lambda$ and $\theta:\K_m\rightarrow\Lambda$ represent elements in $\HH^*(\Lambda)$ and are given by $\eta(\varepsilon^n_i) = \lambda_i$ for $i=0,1,\ldots,t_n$ and $\theta(\varepsilon^m_j) = \beta_j$ for $j=0,1,\ldots,t_m.$ Then the $r$-component of the bracket $[\eta, \theta] : \K_{n+m-1}\rightarrow\Lambda$ denoted by $[\eta, \theta]^{(r)}$ can be expressed on the $r$-th basis element $\varepsilon^{m+n-1}_r$ as
\begin{align*}
&[\eta, \theta]^{(r)}(\varepsilon^{m+n-1}_r) = \sum_{i=0}^{t_n} \sum_{j=0}^{t_m}  b_{m-n+1,r}(n,i)\lambda_i   - (-1)^{(m-1)(n-1)}(b_{m-n+1,r}(m,j)\beta_j
\end{align*}
provided $\lambda_i,\beta_j$ are all paths of length 1.
\end{theo}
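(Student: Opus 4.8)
The plan is to reduce to the length-one computation already carried out in case~(1) of the proof of Theorem~\ref{Gerstenbrack}, combined with Volkov's chain-level bracket formula. First I would invoke Theorem~\ref{gbrac1} to represent the bracket by
$$[\eta,\theta](\varepsilon^{m+n-1}_r)=\eta\psi_\theta(\varepsilon^{m+n-1}_r)-(-1)^{(m-1)(n-1)}\theta\psi_\eta(\varepsilon^{m+n-1}_r),$$
where $\psi_\eta:\K\to\K[1-n]$ and $\psi_\theta:\K\to\K[1-m]$ are homotopy liftings of $\eta$ and $\theta$. Using the shift convention $(\K[k])_m=\K_{m+k}$, on the basis element $\varepsilon^{m+n-1}_r$ the map $\psi_\theta$ lands in $\K_{n}$ and $\psi_\eta$ lands in $\K_m$, so that $\eta\psi_\theta$ and $\theta\psi_\eta$ are defined and valued in $\Lambda$.

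Because every $\lambda_i$ and every $\beta_j$ is a path of length~$1$, both $\eta$ and $\theta$ are sums of the elementary cochains treated in Theorem~\ref{homotopylifting1}. Writing $\eta=\sum_{i=0}^{t_n}\eta^{(i)}$ and $\theta=\sum_{j=0}^{t_m}\theta^{(j)}$, I would note that the defining equation~\eqref{homo-defi} is $k$-linear in the cocycle, so the sums $\psi_\eta=\sum_i\psi_{\eta^{(i)}}$ and $\psi_\theta=\sum_j\psi_{\theta^{(j)}}$ of the elementary homotopy liftings supplied by Theorem~\ref{homotopylifting1} are themselves homotopy liftings of $\eta$ and $\theta$; the second condition of Definition~\ref{homolift} survives the summation by Remark~\ref{rema-koszul-homo}, where one takes $\psi=0$ and $\mu\psi_\eta\sim 0$. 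Assembling the elementary outputs on $\varepsilon^{m+n-1}_r$ then yields the general forms
$$\psi_\theta(\varepsilon^{m+n-1}_r)=\sum_{i=0}^{t_n}b_{m+n-1,r}(n,i)\,\varepsilon^n_i,\qquad \psi_\eta(\varepsilon^{m+n-1}_r)=\sum_{j=0}^{t_m}b_{m+n-1,r}(m,j)\,\varepsilon^m_j.$$

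Finally I would evaluate. Applying $\eta$ to the first expression and $\theta$ to the second, and using $\eta(\varepsilon^n_i)=\lambda_i$ and $\theta(\varepsilon^m_j)=\beta_j$, I obtain $\eta\psi_\theta(\varepsilon^{m+n-1}_r)=\sum_i b_{m+n-1,r}(n,i)\lambda_i$ and $\theta\psi_\eta(\varepsilon^{m+n-1}_r)=\sum_j b_{m+n-1,r}(m,j)\beta_j$. Substituting these into Volkov's formula gives the claimed expression for $[\eta,\theta]^{(r)}(\varepsilon^{m+n-1}_r)$, which is precisely the sum over all elementary pieces of the length-one output recorded in case~(1) of Theorem~\ref{Gerstenbrack}.

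The step I expect to be the main obstacle is not the final substitution but the bookkeeping that ties the two single sums together: one must confirm that adding the elementary liftings of Theorem~\ref{homotopylifting1} produces exactly the general forms above, with the scalar $b_{m+n-1,r}(n,i)$ attached to the $i$-th basis element of $\K_n$ and $b_{m+n-1,r}(m,j)$ to the $j$-th basis element of $\K_m$, rather than an over-counted double sum. Once the index shifts $m+n-1\mapsto n$ and $m+n-1\mapsto m$ are tracked consistently and the additivity of homotopy liftings is justified from the linearity of~\eqref{homo-defi}, the statement follows directly; the graded-commutativity sign $(-1)^{(m-1)(n-1)}$ is carried unchanged from Theorem~\ref{gbrac1}.
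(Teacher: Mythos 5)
Your proposal is correct and follows essentially the same route as the paper: decompose $\eta$ and $\theta$ into the elementary cochains $\eta^{(i)}$, $\theta^{(j)}$, use the homotopy liftings supplied by Theorem~\ref{homotopylifting1} for paths of length one, and evaluate Volkov's formula from Theorem~\ref{gbrac1}. The only cosmetic difference is that you sum the elementary liftings first and apply the bracket formula once, whereas the paper expands the bracket bilinearly over the pairs $(\eta^{(i)},\theta^{(j)})$ and sums at the end; the two bookkeeping orders give the same result.
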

\begin{proof} 
Write $\eta = \sum_{i=0}^{t_n} \eta^{(i)}$ where $ \eta^{(i)} = \begin{pmatrix} 0 & \cdots & 0 & (\lambda_i)^{(i)} & 0 & \cdots  & 0\end{pmatrix}$ and $\theta = \sum_{j=0}^{t_m} \theta^{(j)}$ where $ \theta^{(j)} = \begin{pmatrix} 0 & \cdots & 0 & (\beta_j)^{(j)} & 0 & \cdots  & 0\end{pmatrix}.$ Using the definition of Gerstenhaber bracket of~\ref{gbrac1}, we get for $0\leq r\leq t_{m+n-1}$,
\begin{align*}
&[\eta, \theta]^{(r)}(\varepsilon^{m+n-1}_r) = [\sum_{i=0}^{t_n} \eta^{(i)},  \sum_{j=0}^{t_m} \theta^{(j)}]^{r}(\varepsilon^{m+n-1}_r) = \sum_{i=0}^{t_n} \sum_{j=0}^{t_m} [\eta^{(i)},  \theta^{(j)}]^{r}(\varepsilon^{m+n-1}_r)\\
&=\sum_{i=0}^{t_n} \sum_{j=0}^{t_m} (\eta^{(i)} \psi_{\theta^{(j)}} - (-1)^{(m-1)(n-1)}\theta^{(j)}\psi_{\eta^{(i)}})(\varepsilon^{m+n-1}_r)
\end{align*}
Since $\lambda_i = f^1_{w_i}$ for all $i$, and $\beta_j = f^1_{p_j}$ for all $j$, then the homotopy lifting maps can be defined as
$\psi_{\theta^{(j)}}(\varepsilon^{m+n-1}_r) = b_{m-n+1,r}(n,i)\varepsilon^n_i$ for some $i$ and $\psi_{\eta^{(i)}}(\varepsilon^{m+n-1}_r) = b_{m-n+1,r}(m,j)\varepsilon^m_j$ for some $j$. Applying this, we get
\begin{align*}
&\sum_{i=0}^{t_n} \sum_{j=0}^{t_m} \eta^{(i)}( b_{m-n+1,r}(n,i)\varepsilon^n_i  )  - (-1)^{(m-1)(n-1)}\theta^{(j)}(b_{m-n+1,r}(m,j)\varepsilon^m_j)\\
&=\sum_{i=0}^{t_n} \sum_{j=0}^{t_m}  b_{m-n+1,r}(n,i)\lambda_i   - (-1)^{(m-1)(n-1)}(b_{m-n+1,r}(m,j)\beta_j
\end{align*}
\end{proof}

\section{Application}\label{Gerstenhaber-app}
In this section, we give an application of our results in Theorems \ref{homotopylifting1} and \ref{homotopylifting2} to specify solutions to the Maurer-Cartan equation. The Maurer-Cartan equation is useful in the theory of deformation of algebras.

It is known that Hochschild cohomology is a differential graded Lie algebra i.e. 
 $$\bar{d}([f,g]) = [\bar{d}(f),g] + (-1)^{(m-1)}[f,\bar{d}(g)],$$
 where $\bar{d}(f) = (-1)^{(m-1)}f\delta_{m+1}$. Since the resolution $(\K,d)$ embeds into the bar reduced resolution $(\mathcal{B},\delta)$ via $\K\xrightarrow{\iota}\mathcal{B}$, with $ \iota d = \delta\iota$, there are no sign changes, hence we take $\bar{d}(\eta) = (-1)^{(m-1)}d^{*}_{m+1}\eta = (-1)^{(m-1)}\eta d_{m+1}.$ An Hochschild 2-cocycle $\eta$ is then said to satisfy the Maurer-Cartan equation if
\begin{equation}\label{maurer-cartan}
\bar{d}(\eta) + \frac{1}{2}[\eta,\eta] = 0 
\end{equation}
Applying the definition of the bracket to a 2-cocycle, we obtain the following version of the Maurer-Cartan equation
\begin{equation}\label{maurer-cartan1}
 - d^{*}_{3}(\eta) = - \frac{1}{2}(\eta\psi_\eta + \eta\psi_\eta) = - \eta\psi_\eta
\end{equation}
\begin{theo}\label{Maurer-cartan-theorem}
Let $k$ be a field and $\Lambda = kQ/I$ be a Koszul algebra. Suppose $\eta :\K_2\rightarrow \Lambda$ is a cocycle such that 
$ \eta = \begin{pmatrix} 0 & \cdots & 0 & (\lambda)^{(i)} & 0 & \cdots  & 0\end{pmatrix}$, $i=0,1,2,\ldots, t_2$. If $\lambda\in kQ_2$, i.e. a linear combination of paths of length 2, then $\eta$ satisfies the Maurer-Cartan equation.
\end{theo}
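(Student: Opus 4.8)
The plan is to reduce the Maurer--Cartan equation \eqref{maurer-cartan} to the single chain-level identity $\eta\psi_\eta=0$ and then verify that identity from the explicit homotopy lifting of Theorem~\ref{homotopylifting2}. First I would use that $\eta$ is a $2$-cocycle, so $d_3^{*}\eta=\eta d_3=0$ and hence $\bar d(\eta)=(-1)^{2-1}\eta d_3=0$. Since $m=n=2$, the Koszul sign in Theorem~\ref{gbrac1} is $(-1)^{(m-1)(n-1)}=-1$, so $[\eta,\eta]=\eta\psi_\eta-(-1)^{(m-1)(n-1)}\eta\psi_\eta=2\,\eta\psi_\eta$. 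Substituting both facts into \eqref{maurer-cartan} collapses it to $\eta\psi_\eta=0$, which is exactly the reduced form \eqref{maurer-cartan1}. Thus it suffices to prove $\eta\psi_\eta(\varepsilon^{3}_r)=0$ for every $r$.

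Next I would compute $\eta\psi_\eta$ on generators. Expanding $\lambda$ as a $k$-linear combination of length-$2$ paths $f^1_wf^1_{w+1}$ and using $\Lambda^e$-linearity, Theorem~\ref{homotopylifting2} (with $n=2$, $m=3$, so $m-n+1=2$) gives
\[
\psi_\eta(\varepsilon^3_r)=b_{3,r}(2,s+1)\,f^1_w\varepsilon^2_{s+1}+b_{3,r}(2,s)\,\varepsilon^2_s f^1_{w+1}.
\]
Applying $\eta$ and using $\eta(\varepsilon^2_j)=\delta_{ji}\lambda$ retains only the indices with $s+1=i$ or $s=i$ (at most one of which holds), leaving a combination of the length-$3$ elements $f^1_w\lambda$ and $\lambda f^1_{w+1}$. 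Because $\lambda\in kQ_2$ forces $\eta$ to be homogeneous of internal degree $0$, the composite $\eta\psi_\eta$ has internal degree $0$ and $\eta\psi_\eta(\varepsilon^3_r)\in\Lambda_3$.

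The hard part will be showing this length-$3$ expression is zero in $\Lambda$. When $Q$ has no loops it is immediate, since every surviving monomial repeats an arrow (for instance $f^1_w\lambda=f^1_wf^1_wf^1_{w+1}$) and a non-loop cannot be composed with itself. In the presence of loops I would bring in the cocycle condition: unwinding $\eta d_3(\varepsilon^3_r)=0$ through \eqref{diff-k} yields, for each $r$, the identity $\sum_p c_{p,i}(3,r,1)\,f^1_p\lambda=\sum_q c_{i,q}(3,r,2)\,\lambda f^1_q$ in $\Lambda$, which equates the prescribed left and right arrow--multiples of the degree-$2$ element $\lambda$. Since the scalars $b_{3,r}(2,\cdot)$ of Theorem~\ref{homotopylifting2} are assembled from these same $c$'s, I expect the two families of length-$3$ monomials occurring in $\eta\psi_\eta(\varepsilon^3_r)$ to be paired by this identity and to collapse into commutator-type differences that vanish once reduced modulo the quadratic ideal $I$. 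Carrying out this cancellation in full---tracking the $b$'s against the $c$'s and the defining relations of $I$---is the step I expect to be the genuine obstacle; the formal reduction of the first paragraph and the loopless case are routine, and together they already indicate that it is precisely the degree-$2$ hypothesis on $\lambda$ that makes the obstruction $\tfrac12[\eta,\eta]=\eta\psi_\eta$ disappear.
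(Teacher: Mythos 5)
Your opening reduction is correct, and it is essentially the same reduction the paper makes: with $m=n=2$ the sign $(-1)^{(m-1)(n-1)}=-1$ gives $[\eta,\eta]=2\eta\psi_\eta$, and the Maurer--Cartan equation collapses to the identity between $d_3^{*}\eta$ and $\eta\psi_\eta$ recorded in \eqref{maurer-cartan1}; using the cocycle hypothesis to discard $\bar d(\eta)$ at the outset, as you do, is a harmless (indeed cleaner) variant. Your computation of $\eta\psi_\eta(\varepsilon^3_r)$ from Theorem~\ref{homotopylifting2} also matches the paper's, which obtains $\eta\psi_\eta(\varepsilon^3_r)=b_{3,r}(2,i)\,f^1_u f^1_u f^1_v$ for $\lambda=f^1_uf^1_v$.

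The genuine gap is in your final paragraph. The entire content of the theorem is the vanishing (or matching) of this length-$3$ term, and you establish it only when $Q$ has no loops; in the presence of loops you describe the cancellation you \emph{expect} but do not carry it out. The identity you extract from $\eta d_3=0$, namely $\sum_p c_{p,i}(3,r,1)f^1_p\lambda=\sum_q c_{i,q}(3,r,2)\lambda f^1_q$ in $\Lambda$, does not by itself force $b_{3,r}(2,i)f^1_u\lambda=0$: the scalar $b_{3,r}(2,i)$ is tied to the $c$'s only through conditions (i)--(ii) of Theorem~\ref{homotopylifting2}, and one must actually track those conditions against the cocycle identity to conclude anything. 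This is precisely where the paper's proof does its work, and it does so not by deducing the vanishing from the cocycle condition but by exhibiting explicit sufficient scalar hypotheses --- $c_{pi}(3,r,1)=0$ for all $p\neq u$, $c_{iq}(3,r,2)=0$ for all $q$, and $c_{ui}(3,r,1)=b_{3,r}(2,i)$ --- under which the two sides of \eqref{maurer-cartan1} agree term by term in $kQ_3$. So neither argument proves the statement unconditionally, but the paper at least isolates the conditions that make it true, whereas your write-up leaves the decisive step as a conjecture. To close the gap you would need either to derive those scalar identities from the definition of the homotopy lifting in this degree, or to state them as hypotheses as the paper's proof effectively does.
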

\begin{proof}
It is enough to check that for the case where $\lambda = f^1_u f^1_v,$ the result is true. The left hand side of Equation~\ref{maurer-cartan1} is given by
\begin{align*}
 & d^{*}_{3}(\eta)(\varepsilon^3_r) = \eta d_3 (\varepsilon^3_r) = \eta \Big[ \sum_{j=0}^{t_2}<\varepsilon^{2}_j>_{3,r} \Big]  = \eta\sum_{j=0}^{t_2}\Big[ \sum_{p=0}^{t_1}c_{pj}(3,r,1)f_p^1 \varepsilon^{2}_j 
- \sum_{q=0}^{t_1}c_{jq}(3,r,2)\varepsilon^{2}_j f_q^1 \Big] \\
&= \sum_{p=0}^{t_1}c_{pi}(3,r,1)f_p^1 \eta(\varepsilon^{2}_i) 
- \sum_{q=0}^{t_1}c_{iq}(3,r,2)\eta(\varepsilon^{2}_i) f_q^1   \\
&= \sum_{p=0}^{t_1}c_{pi}(3,r,1)f_p^1  f^1_u f^1_v 
- \sum_{q=0}^{t_1}c_{iq}(3,r,2) f^1_u f^1_v f_q^1 
\end{align*}
which is in $kQ_3$. On the other hand, the result of Theorem \ref{Gerstenbrack} states that $[\eta, \eta](\varepsilon^{3}_r) \in kQ_3$ since $\lambda= f^1_u f^1_v,$ a path of length 2. In particular, the right hand side of Equation \eqref{maurer-cartan1} becomes
\begin{align*}
&\frac{1}{2}[\eta, \eta](\varepsilon^{3}_r)  = \eta\psi_\eta(\varepsilon^{3}_r) = \eta( b_{3,r}(2,i)f^1_{u}\varepsilon^{2}_{i} + b_{3,r}(2,j)\varepsilon^{2}_j f^1_{v})\\ 
&= b_{3,r}(2,i)f^1_{u}\eta(\varepsilon^{2}_{i}) =  b_{3,r}(2,i) f^1_{u} f^1_u f^1_v.
\end{align*}
This expression could have yielded $ b_{3,r}(2,i) f^1_{u} f^1_v f^1_v$ if we take $\psi_\eta(\varepsilon^3_r)$ to be $b_{3,r}(2,j)f^1_{u}\varepsilon^{2}_{j} + b_{3,r}(2,i)\varepsilon^{2}_i f^1_{v}$ but it doesn't affect the result. The solution to this equation is obtained by identifying possible scalars for which
\begin{align*}
& \sum_{p=0}^{t_1}c_{pi}(3,r,1)f_p^1  f^1_u f^1_v 
- \sum_{q=0}^{t_1}c_{iq}(3,r,2) f^1_u f^1_v f_q^1  =   b_{3,r}(2,i) f^1_{u} f^1_u f^1_v.
\end{align*}
If $c_{pi}(3,r,1) = 0$ for all $p\neq u$,  $c_{iq}(3,r,2) = 0$ for all $q$ and $c_{pi}(3,r,1) = b_{3,r}(2,i)$ when $p=u$, the Maurer-Cartan equation \eqref{maurer-cartan} holds. 
\begin{rema}
A 2-cocycle defined with $\lambda =f^1_u$ cannot satisfy the Maurer-Cartan equation. This is because while the left hand side of Equation~\ref{maurer-cartan1} yields a linear combination of paths of length 2, the right hand side yields a linear combination of paths of length 1.
\end{rema}

\end{proof}

\section{Short Example}\label{sexample}
Let $k$ be a field of characteristics different from 2. Consider the quiver algebra $A=kQ/I$ (also examined in \cite[ Example 5]{MSKA}) defined using the following finite quiver:
$$\begin{tikzcd} 1 \arrow[out=100,in=200,loop,swap,"x"]
  \arrow[out=340,in=70,loop,swap,"y"]
\end{tikzcd}$$
with one vertex and two arrows  $x,y.$ We denote by $e_1$ the idempotent associated with the only vertex. Let $I$, an ideal of the path algebra $kQ$ be defined by
\begin{equation*}
I = \langle x^2,xy+yx\rangle.
\end{equation*}
Since $\{x^2, xy+yx\}$ is a quadratic Grobner basis for the ideal generated by relations under the length lexicographich order with $x>y>1$, the algebra is Koszul.

In order to define a comultiplicative structure, we take $t_0=0, t_n=1$ for all $n$, $f_0^0=e_1, f_1^0=0, f_0^1=x, f_1^1=y, f^2_0=x^2, f^2_1=xy+yx, f_0^3=x^3, f_1^3 = x^2y+xyx+yx^2,$ and in general $ f^n_0 = x^n, f^n_1 = \sum_{i+j=n-1}x^iyx^j$. We also see that $f^n_0 = f^r_0 f^{n-r}_0$ and $f^n_1 = f^r_0f^{n-r}_1 + f^r_1f^{n-r}_0$ so $c_{00}(n,0,r) = c_{01}(n,1,r) = c_{10}(n,1,r) = 1$ and all other $c_{pq}(n,i,r)=0$. With the above stated, we can construct the resolution $\K$ for the algebra $A$. A calculation shows that 
\begin{align*}
&d_1(\varepsilon^1_0) = x\varepsilon^0_0 - \varepsilon^0_0 x, && d_1(\varepsilon^1_1) = y\varepsilon^0_0 - \varepsilon^0_0 y\\
&d_2(\varepsilon^2_0) = x\varepsilon^1_0 + \varepsilon^1_0 x, && d_2(\varepsilon^2_1)= y\varepsilon^1_0 + \varepsilon^1_0 y + x\varepsilon^1_1 + \varepsilon^1_1 x.
\end{align*}
Consider the following maps $\chi,\theta:\K_1\xrightarrow{}A$ defined by $\chi = (xy\;\; 0)$ and $\theta = (0\;\; y)$. Observe that $\chi + \theta = (xy\;\; y)$ was originally given as an example in \cite{MSKA}. With the following calculations
\begin{align*}
\chi d_2(\varepsilon^2_0) &= \chi (x\varepsilon^1_0 + \varepsilon^1_0 x) = x^2y + xyx = x^2y - x^2y = 0 \\
\chi d_2(\varepsilon^2_1) &= \chi (y\varepsilon^1_0 + \varepsilon^1_0 y + x\varepsilon^1_1 + \varepsilon^1_1 x) = yxy + xy^2 + 0 = (yx + xy)y = 0 \\
\theta d_2(\varepsilon^2_0) &= \theta (x\varepsilon^1_0 + \varepsilon^1_0 x) = 0 \\
\theta d_2(\varepsilon^2_1) &= \theta (y\varepsilon^1_0 + \varepsilon^1_0 y + x\varepsilon^1_1 + \varepsilon^1_1 x) = 0 + xy + yx = 0,
\end{align*}
$\chi$ and $\theta$ are cocycles. The comultiplicative map $\Delta:\K\xrightarrow{}\K\ot_A\K$ on $\varepsilon^1_0$, $\varepsilon^1_1, \varepsilon^2_0,\varepsilon^2_1$ is given by 
\begin{align*}
\Delta(\varepsilon^1_0) &= c_{00}(1,0,0)\varepsilon^0_0\ot\varepsilon^1_0 + c_{00}(1,0,1)\varepsilon^1_0\ot\varepsilon^0_0=  \varepsilon^0_0\ot\varepsilon^1_0 + \varepsilon^1_0\ot\varepsilon^0_0,\\
\Delta(\varepsilon^1_1) &= \varepsilon^0_0\ot\varepsilon^1_1 + \varepsilon^1_1\ot\varepsilon^0_0,\\
\Delta(\varepsilon^2_0) &= \varepsilon^0_0\ot\varepsilon^2_0 + \varepsilon^1_0\ot\varepsilon^1_0 +\varepsilon^2_0\ot\varepsilon^0_0,\\
\Delta(\varepsilon^2_1) &=  \varepsilon^0_0\ot\varepsilon^2_1 + \varepsilon^1_0\ot\varepsilon^1_1 +\varepsilon^1_1\ot\varepsilon^1_0 + \varepsilon^2_1\ot\varepsilon^0_0.
\end{align*}
From Theorems \ref{homotopylifting1} and \ref{homotopylifting2}, it can be verified by direct calculations that the first, second and third degree of the homotopy lifting maps $\psi_\chi$ and $\psi_\theta$ associated with $\chi$ and $\eta$ respectively are the following:
\begin{align*}
\psi_{\chi_0} = 0, \qquad & \psi_{\chi_1}(\varepsilon^1_0) = x\varepsilon^1_1 + \varepsilon^1_0y, \psi_{\chi_1}(\varepsilon^1_1) = 0 && \psi_{\chi_2}(\varepsilon^2_0) = x\varepsilon^2_1, \psi_{\chi_2}(\varepsilon^2_1) = \varepsilon^2_1 y \\
\psi_{\theta_0} = 0,\qquad & \psi_{\theta_1}(\varepsilon^1_0) = 0, \qquad \psi_{\theta_1}(\varepsilon^1_1) = \varepsilon^1_1 && \psi_{\theta_2}(\varepsilon^2_0) = 0, \psi_{\theta_2}(\varepsilon^2_1) = \varepsilon^2_1. 
\end{align*}
For the cocycle $\chi$ for instance, $b_{20}(2,1)=b_{2,1}(2,1)=1$, $b_{10}(1,0) = b_{10}(1,1)=1$ and the Gerstenhaber bracket of $\chi$ and $\theta$ is given by $[\chi,\theta] = -\chi.$


\section{Long Example}\label{wexamples}
In this section, we give four examples of homotopy lifting maps that come from a family of quiver algebras. While a member of this family was introduced in~\cite{SVHC} as a counterexample to the Snashall-Solberg finite generation conjecture, the Hochschild cohomology modulo nilpotent cocycles of this family as a whole was studied in~\cite{TNO}. See~\cite{SVHC} for details on the Snashall-Solberg finite generation conjecture. Two of these homotopy lifting maps come from degree 1 cocycles while the other two are homotopy lifting maps for degree 2 cocycles. We adapted a version of the resolution $\K$ for this family and find cocycles. We especially note how these homotopy lifting maps follow from their generalized versions presented in Theorem~\ref{homotopylifting1} and Theorem~\ref{homotopylifting2} of Section~\ref{major}. We round off the section by making connections to derivation operators.\\

We begin with the following finite quiver:
$$Q:=  \begin{tikzcd} 1 \arrow[out=190,in=270,loop,swap,"b"]
  \arrow[out=90,in=170,loop,swap,"a"]
  \arrow[r,"c"] & 2
\end{tikzcd}$$
with two vertices and three arrows  $a,b,c.$ We denote by $e_1$ and $e_2$ the idempotents associated with vertices 1 and 2. Let $kQ$ be the path algebra associated with $Q$ and take for each $q\in k$, $I_q\subseteq kQ$ to be an admissible ideal of $kQ$ generated as follows
\begin{equation*}
I_q = \langle a^2,b^2,ab-qba, ac\rangle.
\end{equation*}
Let $\{ \Lambda_q\}_{q\in k} =\{ kQ/I_q \}_{q\in k} $ be a family of quiver algebras generated by the quiver $Q$ and the ideal $I_q$.
To define a set of free basis for the resolution $\K$ we start by letting $kQ_0$ to be the ideal of $kQ$ generated by the vertices of $Q$ with basis $f_0^0=e_1,f_1^0=e_2.$ Next, set $kQ_1$ to be the ideal generated by paths with basis $f_0^1=a,f_1^1=b$ and $f_2^1=c.$ Set $f^2_j,$ $j=0,1,2,3$ to be the set of paths of length 2 that generates the ideal $I$, that is
$f_0^2=a^2,f_1^2=ab-qba,f_2^2=b^2, f^2_3= ac,$ and define a comultiplicative equation on the paths of length $n>2$ in the following way.
$$\begin{cases} 
f^n_0 = a^{n}, \\
f^n_s = f^{n-1}_{s-1} b + (-q)^s f^{n-1}_s a ,& (0<s<n), \\
f^n_n = b^{n}, \\
f^n_{n+1} = a^{(n-1)} c,
\end{cases}$$
The resolution $\K \rightarrow\Lambda_q$ has free basis elements $\{\varepsilon^n_i\}_{i=0}^{t_n}$ such that for each $i$, we have $\varepsilon^n_i = (0,\ldots,0,o(f^n_i)\otimes_k t(f^n_i),0,\ldots,0)$. The differentials on $\K_n$ are given explicitly for this family  by
\begin{align*}
d_1(\varepsilon^1_2) &= c\varepsilon^0_1 - \varepsilon^0_0 c\\
d_n(\varepsilon^n_r) &= (1-\partial_{n,r})[a\varepsilon^{n-1}_r)+(-1)^{n-r}q^r\varepsilon^{n-1}_r a] \\
&+(1-\partial_{r,0})[(-q)^{n-r}b\varepsilon^{n-1}_{r-1}+(-1)^{n}\varepsilon^{n-1}_{r-1} b], \;\;\text{for}\;\;r\leq n\\
d_n(\varepsilon^n_{n+1}) &= a\varepsilon^{n-1}_n + (-1)^n\varepsilon^{n-1}_0 c, \;\;\text{when}\;\;n\geq 2,
\end{align*}
where $\partial_{r,s} = 1$ when $r=s$ and $0$ when $r\neq s$.

For any member $\Lambda_q$ of the above family of quiver algebras, we define the bar resolution $(\mathcal{B}, \delta)$ of $\Lambda_q$ by $\mathcal{B}_n = (\Lambda_q)^{\otimes_{\Lambda^{*}_0}(n+2)}$,  the $(n+ 2)$-fold tensor product of $\Lambda_q$ over $\Lambda^{*}_0$. Note here that $\Lambda^{*}_0$ comes from the grading on $\Lambda_q$ for each $q$ and is not the same as the algebra $\Lambda_q,q=0$. It is isomorphic to two copies of $k$ generated by the two vertices of the quiver $Q$.
Before we begin to present the homotopy lifting maps, we give explicit description of the comultiplicative map $\Delta_{\K}:\K\rightarrow \K\ot_{\Lambda_q}\K$ on the resolution $\K$. As was mentioned in the preliminaries, the resolution $\K$ embeds into the reduced bar resolution $\mathcal{B}$ via the map $\iota:\K_n\rightarrow\mathcal{B}_n$ defined by $\varepsilon^n_r\mapsto 1\ot \widetilde{f^n_r}\ot 1$, where each $\widetilde{f^n_r}$ is viewed as a sum of tensor product of paths of length 1 tensored over $\Lambda^{*}_0$ as given in Equation~\eqref{the-f}. For example, $\widetilde{f^2_0} = f^1_0\ot f^1_0 = a\ot a$, $\widetilde{f^2_1}= f^1_0\ot f^1_1 - q f^1_1\ot f^1_0 = a\ot b - q b\ot a.$ See~\cite[Proposition 2.1]{MSKA} for details about this. Equation~\eqref{diag-bar} gives the comultiplicative map $\Delta:\mathcal{B}\rightarrow \mathcal{B}\ot_{\Lambda_q}\mathcal{B}$ on the bar resolution making the following diagram commutative. 
\begin{equation*}
\begin{tikzcd}
\K \arrow{r}{\Delta_{\K}} \arrow[swap]{d}{\iota} & \K\ot_{\Lambda_q}\K  \arrow{d}{\iota\ot\iota} \\%
\mathcal{B} \arrow{r}{\Delta}& \mathcal{B}\ot_{\Lambda_q}\mathcal{B}.
\end{tikzcd}
\end{equation*}
It is clear that the following holds;
\begin{equation}\label{comultiplication-two}
\begin{cases} 
\widetilde{f^n_0} = f^{1}_0\ot f^{1}_0\ot \cdots \ot f^{1}_0, &{\rm n-times} , \\
\widetilde{f^n_s} = \widetilde{f^{n-1}_{s-1}}\ot f^1_1 + (-q)^s \widetilde{f^{n-1}_s}\ot f^1_0,& {\rm (0<s<n)}, \\
\widetilde{f^n_n} = f^{1}_1\ot f^{1}_1\ot \cdots \ot f^{1}_1, & {\rm n-times}, \\
\widetilde{f^n_{n+1}} = f^{1}_0\ot f^{1}_0\ot \cdots \ot f^{1}_0\ot f^1_2, & {\rm f^1_0 \text{ appearing } (n-1)-times},
\end{cases}
\end{equation}
In case $0<s<n$, it was also shown in \cite{TNO} that\\
 $f^n_s = \sum_{j=max\{0,r+t-n\}}^{min\{t,s\}} (-q)^{j(n-s+j-t)} f^t_j f^{n-t}_{s-j},$
hence, 
\begin{equation}
\widetilde{f^n_s} = \sum_{j=max\{0,r+t-n\}}^{min\{t,s\}} (-q)^{j(n-s+j-t)} \widetilde{f^t_j}\ot \widetilde{f^{n-t}_{s-j}}.
\end{equation}

Calculations from~\cite{TNO} show that for this family, the comultiplicative map can be expressed in the following way
\begin{equation*}
\Delta_{\K}(\varepsilon^{n}_{s}) =
\begin{cases}
\displaystyle{\sum_{r=0}^{n} \varepsilon^r_0\ot \varepsilon^{n-r}_0,} &  s=0 \\
\displaystyle{ \sum_{w=0}^{n}\sum_{j=max\{0,s+w-n\}}^{min\{w,s\}} (-q)^{j(n-s+j-w)} \varepsilon^w_j\ot \varepsilon^{n-w}_{s-j}, } & 0< s < n\\
\displaystyle{\sum_{t=0}^{n} \varepsilon^t_t\ot \varepsilon^{n-t}_{n-t}, } & s=n \\
\displaystyle{ \varepsilon^0_0\ot \varepsilon^{n}_{n+1} + \Big[ \sum_{t=0}^{n} \varepsilon^t_0\ot \varepsilon^{n-t}_{n-t+1}\Big] +  \varepsilon^n_{n+1}\ot \varepsilon^{0}_{0},} &s=n+1.
\end{cases} 
\end{equation*}

\subsection{Homotopy lifting maps from degree 2 cocycles}\label{deg2cocycles}
In this section, we will find Hochschild 2 cocycles. From these cocycles we will select two cocycles and find homotopy lifting maps associated to them. Our choice was arbitrary. However, these examples demonstrate that any homotopy lifting map arises in the form  described in Theorems~\ref{homotopylifting1} and \ref{homotopylifting2}.
\\
\textbf{Notation:} As it was previously used in Section \ref{major}, we recall and use the following standard notation for ease of computations. Since the set $\{\varepsilon^n_r\}_{r=0}^{t_n},$ $t_n=n+1$ forms a basis for $\K_n$, any module homomorphism $\Theta:\K_n\rightarrow\Lambda_q$ taking $\varepsilon^n_i$ to $\lambda_i$, $i=0,1,\ldots,n+1$ is denoted by
$$ \Theta = \begin{pmatrix} \lambda_{0} & \lambda_{1} & \cdots  &  \lambda_{t_n}\end{pmatrix}.$$

Suppose that the $\Lambda_q^e$-module homomorphism $\eta:\K_2\rightarrow \Lambda_q$ defined by \\
$ \eta
 = \begin{pmatrix} \lambda_{0} & \lambda_{1}& \lambda_{2} &  \lambda_{3}\end{pmatrix}$  is a cocycle, that is $d^*\eta = 0,$ with $\lambda_i\in\Lambda_q$ for all $i$. Since $d^*\eta : \K_3\rightarrow \Lambda_q$, we obtain using $d^*\eta(\varepsilon^3_r) = \eta d(\varepsilon^3_r)$, 
$$
\eta\cdot d ( \varepsilon^{3}_{i}) = 
\eta \Big(\begin{cases} a\varepsilon^{2}_{0} - \varepsilon^{2}_{0}a & {\rm if}\;i=0\\
 a\varepsilon^{2}_{1}+q\varepsilon^{2}_{1}a + q^2 b\varepsilon^{2}_{0} - \varepsilon^{2}_{0}b & {\rm if}\;i=1\\ 
 a\varepsilon^{2}_{2}-q^2\varepsilon^{2}_{2}a - q b\varepsilon^{2}_{1} - \varepsilon^{2}_{1}b & {\rm if}\;i=2\\ 
 b\varepsilon^{2}_{2}- \varepsilon^{2}_{2}b   & {\rm if}\;i=3\\   
 a\varepsilon^{2}_{3}- \varepsilon^{2}_{0}c  & {\rm if}\;i=4
 \end{cases}\Big)  
$$
$\eta\cdot d$ may then be identified with the matrix
$$
 \begin{pmatrix} a\lambda_{0} - \lambda_{0}a,  &
 a\lambda_{1}+q\lambda_{1}a + q^2 b\lambda_{0} - \lambda_{0}b, &
 a\lambda_{2}-q^2\lambda_{2}a - q b\lambda_{1} - \lambda_{1}b, &
 b\lambda_{2}- \lambda_{2}b,  &
 a\lambda_{3}- \lambda_{0}c 
 \end{pmatrix} $$
which will be equated to $\begin{pmatrix} 0 &0&0&0&0\end{pmatrix}$ and solved. We solve this system of equations with the following in mind. There is an isomorphism of $\Lambda_q^e$-modules $\HHom_{\Lambda^e}(\Lambda o(f^n_i)\otimes_k t(f^n_i)\Lambda,\Lambda) \simeq o(f^n_i)\Lambda \; t(f^n_i)$ ensuring that
\begin{align*}
o(f^2_i)\lambda_i t(f^2_i) &= o(f^2_i)\eta(\varepsilon^2_i) t(f^2_i) = o(f^2_i)\eta(o(f^2_i)\otimes_k t(f^2_i)) t(f^2_i)\\
 &= \phi(o(f^2_i)^2\otimes_k t(f^2_i)^2) = \phi(o(f^2_i)\otimes_k t(f^2_i)) = \lambda_i.
 \end{align*}
This means that for $i=0,1,2$ each $\lambda_i$ should satisfy $e_1\lambda_ie_1 = \lambda_i$  since the origin and terminal vertex of $f^2_0, f^2_1, f^2_2$ is $e_1$ and  $e_1\lambda_3e_2 = \lambda_3$. We obtain the following 9 solutions presented in Table \ref{table 1}.
\begin{table}[t]
\begin{tabular}{ |c|c|c|c|c|c|c|c|c|c| }  \hline
solutions             & 1 & 2  & 3 & 4 & 5 & 6 & 7 & 8 & 9 \\ \hline
$\lambda_0$ & a & ab & 0 & 0 & 0 & 0 & 0 & 0 & 0 \\ 
$\lambda_1$ & 0 & 0   & 0 & 0 & 0 & 0 & ab & 0 & 0 \\
$\lambda_2$ & 0 & 0   & a & b & ab& $e_1$ & 0 & 0 & 0 \\ 
$\lambda_3$ & 0 & 0   & 0 & 0 & 0 & 0 & 0 & c & bc \\  \hline
\end{tabular}
\caption{Possible values of $\eta(\varepsilon^2_r) = \lambda_r$ for different $r$.}
\label{table 1}
\end{table}
For the rest of this section, we are interested in the first and fifth cocycles, that is
$$ \bar{\eta} = \begin{pmatrix} a & 0 & 0 &  0 \end{pmatrix} \text{  and  }  \bar{\chi} = \begin{pmatrix} 0 & 0 & ab &  0 \end{pmatrix} .$$
Again we are looking at the homotopy lifting maps corresponding to these two cocycles because all of the other homotopy lifting maps corresponding to other cocyles will have similar properties. 
\\
\textbf{Homotopy lifting} for the first and fifth maps $\bar{\eta}$ and $\bar{\chi}$ will be maps $\psi_{\bar{\eta}}, \psi_{\bar{\chi}}:\K\rightarrow \K[-1]$ such that 
$$d\psi_{\bar{\eta}} +  \psi_{\bar{\eta}} d = (\bar{\eta}\ot 1_{\K} - 1_{\K}\ot \bar{\eta})\Delta_{\K},\;\text{ and }\; d\psi_{\bar{\chi}} +  \psi_{\bar{\chi}} d = (\bar{\chi}\ot 1_{\K} - 1_{\K}\ot \bar{\chi})\Delta_{\K}.$$ We track the left hand side of the above equations using the following diagram, which is not necessarily commutative.
$$ \begin{tikzcd}
\K: =\arrow{d}{\psi_{\bar{\eta}},\;\psi_{\bar{\chi}}} \cdots \arrow{r}{d_5} 
    & \K_4\arrow{d}{\psi_{\bar{\eta}_4},\;\psi_{\bar{\chi}_4}}\arrow{r}{d_4}
        & \K_3 \arrow{d}{\psi_{\bar{\eta}_3},\;\psi_{\bar{\chi}_3}} \arrow{r}{d_3}
            & \K_2 \arrow{d}{\psi_{\bar{\eta}_2},\;\psi_{\bar{\chi}_2}} \arrow{r}{d_2}
                & \K_1 \arrow{d}{\psi_{\bar{\eta}_1},\;\psi_{\bar{\chi}_1}} \arrow{r}{d_1}
		& \K_0\\
\K:= \cdots \arrow{r}{d_4}
    & \K_3 \arrow{r}{d_3}
        & \K_2 \arrow{r}{d_2}
            & \K_1 \arrow{r}{d_1}
                & \K_0
\end{tikzcd}$$
We now define $\psi_{\bar{\eta}_1}, \psi_{\bar{\eta}_2}, \psi_{\bar{\eta}_3}$ and  $\psi_{\bar{\chi}_1}, \psi_{\bar{\chi}_2}, \psi_{\bar{\chi}_3}$ only, just to point out that homotopy lifting maps can be defined as generalized in Theorem~\eqref{homotopylifting1} and~\eqref{homotopylifting1}. Calculations show that
\begin{equation}\label{deg2eta}
\psi_{\bar{\eta}_1} (\varepsilon^1_i) = 0,\;i=0,1,2,\;
\psi_{\bar{\eta}_2} (\varepsilon^2_i) = \begin{cases} \varepsilon^1_0, & {\rm if }\;i=0 \\ 0, & {\rm if }\;i=1,2,3 \end{cases}, \quad
\psi_{\bar{\eta}_3}  (\varepsilon^3_i) = \begin{cases} 0, & {\rm if }\;i=0  \\ \varepsilon^2_{1},& {\rm if }\;i=1 \\ 0,& {\rm if }\;i=2 \\ 0,& {\rm if }\;i=3  \\  \varepsilon^2_{3},& {\rm if }\;i=4 \end{cases} 
\end{equation}
are the first, second and third degrees of the homotopy lifting map $\psi_{\bar{\eta}}$ while
\begin{equation}\label{deg2chi}
\psi_{\bar{\chi}_1} (\varepsilon^1_i) = 0,\;i=0,1,2,\;
\psi_{\bar{\chi}_2} (\varepsilon^2_i) = \begin{cases} 0 & {\rm if }\;i=0 \\ 0, & {\rm if }\;i=1 \\ a\varepsilon^1_1+\varepsilon^1_0 b& {\rm if }\;i=2\\ 0& {\rm if }\;i=3  \end{cases}, \quad
\psi_{\bar{\chi}_3}  (\varepsilon^3_i) = \begin{cases} 0, & {\rm if }\;i=0  \\ 0,& {\rm if }\;i=1 \\ -a \varepsilon^2_{1},& {\rm if }\;i=2 \\  \varepsilon^2_{1}b,& {\rm if }\;i=3  \\  0,& {\rm if }\;i=4 \end{cases}
\end{equation}
are the first, second and third degrees of the homotopy lifting map $\psi_{\bar{\chi}}$. The proof of this claim is by direct computations. We illustrate with just one of the numerous computations i.e we will show that for the cocycle $\bar{\eta}$ and $\bar{\chi}$ with $q=1$, 
\begin{align*}
( d_2\psi_{\bar{\eta}_3} + \psi_{\bar{\eta}_2} d_3)(\varepsilon^3_i) &= (\bar{\eta}\ot 1 - 1\ot\bar{\eta})\Delta_{\K} (\varepsilon^3_i), \quad i=0,1,2,3,4\\
( d_2\psi_{\bar{\chi}_3} + \psi_{\bar{\chi}_2} d_3)(\varepsilon^3_i) &= (\bar{\chi}\ot 1 - 1\ot\bar{\chi})\Delta_{\K} (\varepsilon^3_i), \quad i=0,1,2,3,4.
\end{align*}
\textbf{The case of $\bar{\eta}$:} We get $(d_2\psi_{\bar{\eta}_3} + \psi_{\bar{\eta}_2}d_3)( \varepsilon^{3}_{i})$ equal to
$$\begin{cases} d_2(0)+ \psi_{\bar{\eta}_2}(a\varepsilon^{2}_{0}- \varepsilon^{2}_{0}a) &{\rm if}\;i=0  \\
 d_2(\varepsilon^2_1) +  \psi_{\bar{\eta}_2}(a\varepsilon^{2}_{1}+q\varepsilon^{2}_{1}a +  q^2b\varepsilon^{2}_{0}-\varepsilon^{2}_{0}b) &{\rm if}\;i=1 \\
   d_2(0) +  \psi_{\bar{\eta}_2}(a\varepsilon^{2}_{2}+q^2\varepsilon^{2}_{2}a - q b\varepsilon^{2}_{1}-\varepsilon^{2}_{1}b)&{\rm if}\;i=2 \\  
 d_2(0) + \psi_{\bar{\eta}_2}( b\varepsilon^{2}_{2}-\varepsilon^{2}_{2}b) &{\rm if}\;i=3  \\
  d_2(\varepsilon^2_3) +  \psi_{\bar{\eta}_2}(a\varepsilon^{2}_{3}-\varepsilon^{2}_{0}c) &{\rm if}\;i=4  \end{cases}$$
which is equal to
$$\begin{cases} 0 +  a\varepsilon^{1}_{0} -  \varepsilon^{1}_{0}a\\
 a\varepsilon^1_{1} - q\varepsilon^1_{1}a - qb\varepsilon^1_{0} + \varepsilon^1_{0}b + q^2 b\varepsilon^{1}_{0} - \varepsilon^{1}_{0}b\\ 
0 + 0 \\   
0 + 0 \\ 
a\varepsilon^1_{2} + \varepsilon^1_{0}c   - \varepsilon^{1}_{0}c
\end{cases} 
=   \begin{cases} a\varepsilon^{1}_{0} -\varepsilon^{1}_{0}a &{\rm if}\;i=0 \\ 
 a\varepsilon^{1}_{1} -\varepsilon^{1}_{1}a &{\rm if}\;i=1\\
 0  &{\rm if}\;i=2\\   
0 &{\rm if}\;i=3\\  
a\varepsilon^{1}_{2} &{\rm if}\;i=4 
 \end{cases}.$$
On the other hand, and using Koszul signs in the expansion of  $(1\ot\bar{\eta})(\varepsilon^{n}_{r}\ot\varepsilon^m_s)$  to obtain $ (-1)^{|\bar{\eta}|n}\varepsilon^{n}_{r}\bar{\eta}(\varepsilon^m_s)$, $(\bar{\eta}\ot 1 - 1\ot\bar{\eta})\Delta_{\K} (\varepsilon^{3}_{i})$ is equal to
$$\begin{cases}
 (\bar{\eta}\ot 1 - 1\ot\bar{\eta})  \big[\varepsilon^{0}_{0}\ot\varepsilon^3_0 +  \varepsilon^{1}_{0}\ot\varepsilon^2_0  + \varepsilon^{2}_{0}\ot\varepsilon^1_0 + \varepsilon^{3}_{0}\ot\varepsilon^0_0\big]\\
(\bar{\eta}\ot 1 - 1\ot\bar{\eta})  \big[\varepsilon^{0}_{0}\ot\varepsilon^3_1 +  \varepsilon^{1}_{0}\ot\varepsilon^2_1  +  q^2\varepsilon^{1}_{1}\ot\varepsilon^2_0  + \varepsilon^{2}_{0}\ot\varepsilon^1_1 - q \varepsilon^{2}_{1}\ot\varepsilon^1_0 + \varepsilon^{3}_{1}\ot\varepsilon^0_0\big]  \\
(\bar{\eta}\ot 1 - 1\ot\bar{\eta})  \big[ \varepsilon^{0}_{0}\ot\varepsilon^3_2 +  \varepsilon^{1}_{0}\ot\varepsilon^2_2  + q\varepsilon^{1}_{1}\ot\varepsilon^2_1 - \varepsilon^{2}_{1}\ot\varepsilon^1_1 + \varepsilon^{2}_{2}\ot\varepsilon^1_0  + \varepsilon^{3}_{2}\ot\varepsilon^0_0\big]\\
(\bar{\eta}\ot 1 - 1\ot\bar{\eta})  \big[ \varepsilon^{0}_{0}\ot\varepsilon^3_3 +  \varepsilon^{1}_{1}\ot\varepsilon^2_2  + \varepsilon^{2}_{2}\ot\varepsilon^1_1 + \varepsilon^{3}_{3}\ot\varepsilon^0_0\big]\\
(\bar{\eta}\ot 1 - 1\ot\bar{\eta})  \big[ \varepsilon^{0}_{0}\ot\varepsilon^3_4 +  \varepsilon^{1}_{0}\ot\varepsilon^2_3  + \varepsilon^{2}_{0}\ot\varepsilon^1_2 + \varepsilon^{3}_{4}\ot\varepsilon^0_0\big] \end{cases}$$
which is the same as
$$\begin{cases}  a\varepsilon^1_{0}  - \varepsilon^1_{0}a  &{\rm if}\;i=0  \\ 
 a\varepsilon^1_{1}  - q^2\varepsilon^1_{1}a  &{\rm if}\;i=1\\
 0 &{\rm if}\;i=2 \\ 
 0  &{\rm if}\;i=3 \\ 
 a\varepsilon^1_{2}   &{\rm if}\;i=4. \end{cases}$$
So we see that $( d_2\psi_{\bar{\eta}_3} + \psi_{\bar{\eta}_2} d_3)(\varepsilon^3_i) = (\bar{\eta}\ot 1 - 1\ot\bar{\eta})\Delta_{\K} (\varepsilon^3_i), \quad i=0,1,2,3,4.$\\ 
\textbf{The case of $\bar{\chi}$:}
\begin{align*}
&(d_2\psi_{\chi_3} + \psi_{\bar{\chi}_2}d_3) (\varepsilon^{3}_{i})
= \begin{cases}
d_2(0)+\psi_{\bar{\chi}_2}(a\varepsilon^{2}_{0}- \varepsilon^{2}_{0}a)\\
d_2(0) +\psi_{\bar{\chi}_2}(  a\varepsilon^{2}_{1}+q\varepsilon^{2}_{1}a +  q^2b\varepsilon^{2}_{0}-\varepsilon^{2}_{0}b)\\
d_2(-a\varepsilon^2_{1}) + \psi_{\chi_2}(   a\varepsilon^{2}_{2}+q^2\varepsilon^{2}_{2}a - q b\varepsilon^{2}_{1}-\varepsilon^{2}_{1}b)\\  
d_2(\varepsilon^2_{1}b) +\psi_{\bar{\chi}_2}(  b\varepsilon^{2}_{2}-\varepsilon^{2}_{2}b ) \\
d_2(0) + \psi_{\bar{\chi}_2}(  a\varepsilon^{2}_{3}-\varepsilon^{2}_{0}c ) \end{cases}  \\
&=  \begin{cases}  0 + 0\\
 0+ 0\\
qa\varepsilon^1_{1}a + qab\varepsilon^1_{0} - a\varepsilon^1_{0}b+ a\varepsilon^{1}_{0}b -q^2a\varepsilon^{1}_{1}a - q^2 \varepsilon^{1}_{0}ba\\  
a\varepsilon^1_{1}b - q\varepsilon^1_{1}ab - qb\varepsilon^1_{0}b + ba\varepsilon^{1}_{1} - b\varepsilon^{1}_{0}b - a \varepsilon^{1}_{1}b  \\
0+ 0  \end{cases}
=   \begin{cases}0&{\rm if}\;i=0\\ 0&{\rm if}\;i=1\\
 ab\varepsilon^{1}_{0} -\varepsilon^{1}_{0}ab&{\rm if}\;i=2\\  
 ab\varepsilon^{1}_{1} -\varepsilon^{1}_{1}ab&{\rm if}\;i=3\\ 
0 &{\rm if}\;i=4.  \end{cases}
\end{align*}
On the other hand, and using Koszul signs convention as done in the previous example, 
we get
\begin{align*}
&(\bar{\chi}\ot 1 - 1\ot\bar{\chi})\Delta_{\K} (\varepsilon^{3}_{i}) = (\bar{\chi}\ot 1 - 1\ot\bar{\chi})\\
&\begin{cases}
 \varepsilon^{0}_{0}\ot\varepsilon^3_0 +  \varepsilon^{1}_{0}\ot\varepsilon^2_0  + \varepsilon^{2}_{0}\ot\varepsilon^1_0 + \varepsilon^{3}_{0}\ot\varepsilon^0_0\\
 \varepsilon^{0}_{0}\ot\varepsilon^3_1 +  \varepsilon^{1}_{0}\ot\varepsilon^2_1  +  q^2\varepsilon^{1}_{1}\ot\varepsilon^2_0  + \varepsilon^{2}_{0}\ot\varepsilon^1_1 - q \varepsilon^{2}_{1}\ot\varepsilon^1_0 + \varepsilon^{3}_{1}\ot\varepsilon^0_0  \\
 \varepsilon^{0}_{0}\ot\varepsilon^3_2 +  \varepsilon^{1}_{0}\ot\varepsilon^2_2  + q\varepsilon^{1}_{1}\ot\varepsilon^2_1 - \varepsilon^{2}_{1}\ot\varepsilon^1_1 + \varepsilon^{2}_{2}\ot\varepsilon^1_0  + \varepsilon^{3}_{2}\ot\varepsilon^0_0\\
 \varepsilon^{0}_{0}\ot\varepsilon^3_3 +  \varepsilon^{1}_{1}\ot\varepsilon^2_2  + \varepsilon^{2}_{2}\ot\varepsilon^1_1 + \varepsilon^{3}_{3}\ot\varepsilon^0_0\\
 \varepsilon^{0}_{0}\ot\varepsilon^3_4 +  \varepsilon^{1}_{0}\ot\varepsilon^2_3  + \varepsilon^{2}_{0}\ot\varepsilon^1_2 + \varepsilon^{3}_{4}\ot\varepsilon^0_0 \end{cases}  
 = \begin{cases} 0&{\rm if}\;i=0 \\ 0&{\rm if}\;i=1 \\ ab\varepsilon^1_{0}  - \varepsilon^1_{0}ab &{\rm if}\;i=2\\  ab\varepsilon^1_{1} - \varepsilon^1_{1}ab&{\rm if}\;i=3\\ 0&{\rm if}\;i=4 \end{cases}.
\end{align*}
So we see again that $( d_2\psi_{\bar{\chi}_3} + \psi_{\bar{\chi}_2} d_3)(\varepsilon^3_i) = (\bar{\chi}\ot 1 - 1\ot\bar{\chi})\Delta_{\K} (\varepsilon^3_i), \quad i=0,1,2,3,4.$ 

\subsection{Homotopy liftings from degree 1 cocycles}\label{Gerstenhaber-1-cocycles}
In this section, we will find Hochschild 1 cocycles and their associated homotopy lifting maps. The process of solving equations to obtain cocycles is the same as what was done in the previous case, so we just state the results. Unlike the previous section, we give explicit maps $\psi_{\eta_n}$ and $\psi_{\chi_n}$ for all $n$. 

Suppose that the $\Lambda_q^e$-module homomorphism $\eta:\K_1\rightarrow \Lambda_q$ defined by \\
$ \eta = \begin{pmatrix} \lambda_{0}& \lambda_{1}& \lambda_{2} \end{pmatrix}$  is a cocycle, that is $d^*\eta = 0,$ with $\lambda_i\in\Lambda_q$ for all $i$. Since $d^*\eta : \K_2\rightarrow \Lambda_q$, we solve the equation $d^*\eta(\varepsilon^2_r) = \eta d_2(\varepsilon^2_r) = 0$ and present the solutions in Table \ref{table 2}.
Let us consider the first and second cocycles in Table \ref{table 2}, i.e.
$$ \eta = \begin{pmatrix} a & 0 & 0  \end{pmatrix} \text{  and  }\quad \chi = \begin{pmatrix} ab & 0 & 0 \end{pmatrix}.$$
There are homotopy lifting maps $\psi_\eta, \psi_\chi : \K_{n}\rightarrow \K_{n}$ associated to $\eta$ and $\chi$ respectively satisfying 
\begin{align}
\label{deg1hom1}
(d\psi_\eta - \psi_\eta d )(\varepsilon^n_r) &= (\eta\ot 1 - 1\ot\eta)\Delta_{\K} (\varepsilon^n_r)\qquad \text{and}\\
\label{deg1hom2}
 (d\psi_\chi - \psi_\chi d)(\varepsilon^n_r) &= (\chi\ot 1 - 1\ot\chi)\Delta_{\K} (\varepsilon^n_r).
\end{align}
\begin{table}[t]
\begin{tabular}{ |c|c|c|c|c|c|c|c|c|c| }  \hline
solutions             & 1 & 2  & 3 & 4 & 5 & 6  \\ \hline
$\lambda_0$ & a & ab & 0 & 0 & 0 & 0  \\ 
$\lambda_1$ & 0 & 0   & b &ab& 0 & 0  \\
$\lambda_2$ & 0 & 0   & 0 & 0 & c & bc \\  \hline
\end{tabular}
\caption{Possible values of $\eta(\varepsilon^1_r) = \lambda_r$ for different $r$.}
\label{table 2}
\end{table}
We will prove that for each $n$ and $r$, 
\begin{equation}\label{deg1eta}
\psi_{\eta_n}(\varepsilon^n_r) = \begin{cases} (n-r)\varepsilon^n_r & {\rm when}\;r=0,1,2,\ldots,n \\ (n-1)\varepsilon^n_r & {\rm when}\;r=n+1,\end{cases}
\end{equation}
is a homotopy lifting map associated to the cocycle $\eta$ and whenever $q=1$, 
\begin{equation}\label{deg1chi}
\psi_{\chi_n}(\varepsilon^n_r) = \begin{cases} (\frac{1+(-1)^r}{2})(-1)^{n+1}a\varepsilon^n_{r+1} + (n-r)\varepsilon^n_r b  & \;\;r=0,1,2,\ldots,n-1, \\ 
0 & \;\;r=n,\\ 
(n-1)( b\varepsilon^n_r+\varepsilon^n_1c ) & \;\;r=n+1 ,
\end{cases}
\end{equation}
is a homotopy lifting map associated to the cocycle $\chi.$ We present proofs of these claims as follows.\\
\textbf{The case for $\eta$}. For $r=0,1,\ldots,n$, we get for the left hand side of Equation~\eqref{deg1hom1},
\begin{align*}
& (d\psi_{\eta_n} -  \psi_{\eta_{n-1}} d)(\varepsilon^n_r)  \\
&= d\big\{(n-r) \varepsilon^{n}_r \big\}  - \psi_{\eta_{n-1}} \big\{ \bar{\partial}_{n,r}[a\varepsilon^{n-1}_r+(-1)^{n-r}q^r\varepsilon^{n-1}_r a] +\bar{\partial}_{r,0}[(-q)^{n-r}b\varepsilon^{n-1}_{r-1}+(-1)^{n}\varepsilon^{n-1}_{r-1} b] \big\} \\
& = (n-r) \big\{ \bar{\partial}_{n,r}[a\varepsilon^{n-1}_r +(-1)^{n-r}q^r\varepsilon^{n-1}_r a] +\bar{\partial}_{r,0}[(-q)^{n-r}b\varepsilon^{n-1}_{r-1}+(-1)^{n}\varepsilon^{n-1}_{r-1} b]\big\} \\
& -   \bar{\partial}_{n,r} [(n-r-1) a\varepsilon^{n-1}_r +(-1)^{n-r}q^r(n-r-1)\varepsilon^{n-1}_r a] \\
&+\bar{\partial}_{r,0}[(-q)^{n-r}(n-r) b\varepsilon^{n-1}_{r-1}+(-1)^{n}(n-r)\varepsilon^{n-1}_{r-1} b]  \\
& =   \bar{\partial}_{n,r} \big[ (n-r) - (n-r-1) a\varepsilon^{n-1}_r +(-1)^{n-r}q^r((n-r) - (n-r-1))\varepsilon^{n-1}_r a\big] \\
&+\bar{\partial}_{r,0}\big[(-q)^{n-r}((n-r)-(n-r)) b\varepsilon^{n-1}_{r-1}+(-1)^{n}((n-r)-(n-r))\varepsilon^{n-1}_{r-1} b\big]  \\
& = \bar{\partial}_{n,r} \big[ a\varepsilon^{n-1}_r +(-1)^{n-r}q^r \varepsilon^{n-1}_r a\big] \\
& = a\varepsilon^{n-1}_r +(-1)^{n-r}q^r \varepsilon^{n-1}_r a, \;\text{\;if\;\;} r\neq n.
\end{align*}
If $r=n$, the expression $ (d\psi_{\eta_n} -  \psi_{\eta_{n-1}} d)(\varepsilon^n_r)$ equals $0$. For the case $r=n+1$, we obtain
\begin{align*}
&(d\psi_{\eta_n} -  \psi_{\eta_{n-1}} d)(\varepsilon^n_{n+1})  = d((n-1) \varepsilon^{n}_{n+1})  - \psi_{\eta_{n-1}} ( a\varepsilon^{n-1}_n+(-1)^{n}\varepsilon^{n-1}_0 c) \\
&= (n-1)( a\varepsilon^{n-1}_n+(-1)^{n}\varepsilon^{n-1}_0 c) - (n-2) a\varepsilon^{n-1}_n - (-1)^{n}(n-1)\varepsilon^{n-1}_0 c) \\
&=    a\varepsilon^{n-1}_n.
\end{align*}
On the right hand side of Equation~\eqref{deg1hom1}, we obtain after using the Koszul signs conventions that $(\eta\ot 1 - 1\ot\eta)\Delta_{\K}(\varepsilon^{n}_{r})$ is equal to
\begin{align*}
& (\eta\ot 1 - 1\ot\eta)\Big(\begin{cases}
\displaystyle{\sum_{t=0}^{n} \varepsilon^t_0\ot \varepsilon^{n-t}_0}, &{\rm if}\;r=0\\
\displaystyle{ \sum_{w=0}^{n} \sum_{j=max\{0,s+w-n\}}^{min\{w,s\}} (-q)^{j(n-s+j-w)} \varepsilon^w_j\ot \varepsilon^{n-w}_{s-j}}, &{\rm if}\;0<r<n\\
\displaystyle{\sum_{t=0}^{n} \varepsilon^t_t\ot \varepsilon^{n-t}_{n-t}},&{\rm if}\;r=n\\
 \displaystyle{ \varepsilon^0_0\ot \varepsilon^{n}_{n+1} + \Big[ \sum_{t=0}^{n} \varepsilon^t_0\ot \varepsilon^{n-t}_{n-t+1}\Big] +  \varepsilon^n_{n+1}\ot \varepsilon^{0}_{0}}, &{\rm if}\;r=n+1
\end{cases}\Big).
\end{align*}
In the case $r=0$, substitute $1$ for the index $t$ when applying  $\eta\ot 1$ and substitute $n-1$ for the index $t$ when applying $1\ot\eta$. Similarly for the case $0<r<n$, substitute $1,0,r$ respectively for the indices $w,j,s$ when applying  $\eta\ot 1$ and substitute $n-1,r,r$ respectively for the indices $w,j,s$ when applying $1\ot\eta$. When $r=n$, everything is zero since $\eta(\varepsilon^1_i)=0$ if $i\neq 0$ and finally when $r=n+1$ take substitute $1$ for the index $t$. What we then have is equal to the following
\begin{align*}
&\begin{cases}
 \displaystyle{\eta(\varepsilon^1_0)\varepsilon^{n-1}_0 - (-1)^{n-1}\varepsilon^{n-1}_0\eta(\varepsilon^1_0)}\\
\displaystyle{ \eta(\varepsilon^1_0)\varepsilon^{n-1}_{r} - (-1)^{n-1} (-q)^{r} \varepsilon^{n-1}_r\eta(\varepsilon^{n}_{0})}  \\
\displaystyle{\eta(\varepsilon^1_1)\varepsilon^{n-1}_0 - (-1)^{n-1}\varepsilon^{n-1}_{n-1}\eta(\varepsilon^1_1)} \\\displaystyle{\eta(\varepsilon^1_0)\varepsilon^{n-1}_n } \\
\end{cases}  
=  \begin{cases}
a\varepsilon^{n-1}_0 + (-1)^{n}\varepsilon^{n-1}_0 a &{\rm if}\;r=0\\
a\varepsilon^{n-1}_r +(-1)^{n-r}q^r \varepsilon^{n-1}_r a &{\rm if}\;0<r<n\\
0 &{\rm if}\;r=n\\ 
a\varepsilon^{n-1}_n &{\rm if}\;r=n+1\\
\end{cases}  \\
& =  (d\psi_{\eta_n} -  \psi_{\eta_{n-1}} d) (\varepsilon^n_{r}).
\end{align*}
Thus we have shown that for $r=0,1,\ldots,n+1$, Equation~\eqref{deg1hom1} holds.\\
\\
\textbf{The case for $\chi$.} When $r=0$, the left hand side of Equation~\eqref{deg1hom2} is
\begin{align*}
& (d\psi_{\chi_n} -  \psi_{\chi_{n-1}} d)(\varepsilon^n_0)  \\
&= d((-1)^{n+1}a \varepsilon^{n}_1 + n \varepsilon^{n}_0b  ) -  \psi_\chi( a\varepsilon^{n-1}_0 + (-1)^{n} \varepsilon^{n-1}_0 a)\\
& = (-1)^{n+1} a [a\varepsilon^{n-1}_1 +  (-1)^{n-1}q\varepsilon^{n-1}_1 a + (-q)^{n-1}b\varepsilon^{n-1}_0 + (-1)^{n}\varepsilon^{n-1}_0 b ] \\
&+ n [a\varepsilon^{n-1}_0 + (-1)^n\varepsilon^{n-1}_0 a ]b  - a[(-1)^{n}a\varepsilon^{n-1}_1 + (n-1)\varepsilon^{n-1}_0 b] \\
&- (-1)^{n}[(-1)^{n}a\varepsilon^{n-1}_1 + (n-1)\varepsilon^{n-1}_0 b ] a.
\end{align*}
Whenever $q=1, ab = ba$, so we obtain  $ab \varepsilon^{n-1}_0 + (-1)^{n} \varepsilon^{n-1}_0 ab$
which is equal to the right hand side of~\eqref{deg1hom2}. Therefore $(\chi\ot 1 - 1\ot \chi)\Delta_{\K}(\varepsilon^n_0)$ because
\begin{align*}
&(\chi\ot 1 - 1\ot \chi) \sum_{t=0}^{n} \varepsilon^t_0\ot \varepsilon^{n-t}_0 = \sum_{t=0}^{n} \chi(\varepsilon^{t}_{0}) \varepsilon^{n-t}_{0} -  \sum_{t=0}^{n}(-1)^t \varepsilon^{t}_{0} \chi(\varepsilon^{n-t}_{0}).
\end{align*} 
When $t=1$ in the first sum and $t=n-1$ in the second sum, the last expression is equal to $\displaystyle{ \chi(\varepsilon^{1}_{0}) \varepsilon^{n-1}_{0} - (-1)^{n-1} \varepsilon^{n-1}_{0} \eta(\varepsilon^{1}_{0}) = ab \varepsilon^{n-1}_0 + (-1)^{n} \varepsilon^{n-1}_0 ab}.$\\
When $r$ is even and $0<r<n \;$,  we obtain the following for the left hand side of~\eqref{deg1hom2}
\begin{align*}
& (d\psi_{\chi_n} -  \psi_{\chi_{n-1}} d)(\varepsilon^n_r)  \\
&= d((-1)^{n+1}a \varepsilon^{n}_{r+1} + (n-r) \varepsilon^{n}_r b  ) \\
&-  \psi_\chi( a\varepsilon^{n-1}_r + (-1)^{n-r}q^r \varepsilon^{n-1}_r a + (-q)^{n-r}b\varepsilon^{n-1}_{r-1} + (-1)^n\varepsilon^{n-1}_{r-1} b)\\
& =(-1)^{n+1} a [a\varepsilon^{n-1}_{r+1} +  (-1)^{n-r-1}q^{r+1}\varepsilon^{n-1}_{r+1} a + (-q)^{n-r-1}b\varepsilon^{n-1}_r + (-1)^{n}\varepsilon^{n-1}_r b ] \\
&+ (n-r) [a\varepsilon^{n-r}_r +  (-1)^{n-1}q^r\varepsilon^{n-1}_r a + (-q)^{n-r}b\varepsilon^{n-1}_{r-1} + (-1)^{n}\varepsilon^{n-1}_{r-1} b ]b \\
& - a[(-1)^{n}a\varepsilon^{n-1}_{r+1} + (n-r-1)\varepsilon^{n-1}_r b] - (-1)^{n-r}q^r[(-1)^{n}a\varepsilon^{n-1}_{r+1} + (n-r-1)\varepsilon^{n-1}_r b ] a\\
&- (-q)^{n-r} b [ (n-r)\varepsilon^{n-1}_{r-1} b ] - (-1)^n [ (n-r)\varepsilon^{n-1}_{r-1} b ] b \\
& = [(-1)^{2n-r}q^{r+1} - (-1)^{2n-r}q^r]a\varepsilon^{n-1}_{r+1}a + [(-1)^{n+1}(-q)^{n-r-1} ]ab\varepsilon^{n-1}_{r} \\
&+ [(-1)^{2n+1} +(n-r)-(n-r-1)  ]a\varepsilon^{n-1}_{r}b + [(-1)^{n-r} q^r(n-r)]\varepsilon^{n-1}_{r}ab \\
&+ [-(-1)^{n-r}q^r(n-r-1) ]\varepsilon^{n-1}_{r}ba  + [(-q)^{n-r}(n-r) - (-q)^{n-r}(n-r) ]b\varepsilon^{n-1}_{r-1} b 
\end{align*}
which with $q=1\;(ab=ba)$, we obtain $ab \varepsilon^{n-1}_r + (-1)^{n-r} \varepsilon^{n-1}_r ab$ and all other terms vanish. When $r$ is odd and $o\leq r\leq n$,  we obtain the following for the left hand side of~\eqref{deg1hom2}
\begin{align*}
& (d\psi_{\chi_n} -  \psi_{\chi_{n-1}} d)(\varepsilon^n_r)  \\
&= d( (n-r) \varepsilon^{n}_r b  ) -  \psi_\chi( a\varepsilon^{n-1}_r + (-1)^{n-r}q^r \varepsilon^{n-1}_r a + (-q)^{n-r}b\varepsilon^{n-1}_{r-1} + (-1)^n\varepsilon^{n-1}_{r-1} b)\\
&= (n-r) [a\varepsilon^{n-1}_r +  (-1)^{n-r}q^r\varepsilon^{n-1}_r a + (-q)^{n-r}b\varepsilon^{n-1}_{r-1} + (-1)^{n}\varepsilon^{n-1}_{r-1} b ]b \\
& - a[ (n-r-1)\varepsilon^{n-1}_r b] - (-1)^{n-r}q^r[ (n-r-1)\varepsilon^{n-1}_r b ] a\\
&- (-q)^{n-r} b [(-1)^{n}a\varepsilon^{n-1}_r + (n-r)\varepsilon^{n-1}_{r-1} b ] - (-1)^n [(-1)^{n}a\varepsilon^{n-1}_r + (n-r)\varepsilon^{n-1}_{r-1} b ] b \\
& = [-(-1)^{n}(-q)^{n-r}   ]ba\varepsilon^{n-1}_{r} + [(n-r)-(n-r-1) -(-1)^{2n} ]a\varepsilon^{n-1}_{r}b + [(-1)^{n-r}q^r(n-r)]\varepsilon^{n-1}_{r}ab \\
&+ [-(-1)^{n-r}q^r(n-r-1) ]\varepsilon^{n-1}_{r}ba  + [(-q)^{n-r}(n-r) - (-q)^{n-r}(n-r) ]b\varepsilon^{n-1}_{r-1} b 
\end{align*}
which with $q=1\;$, we get the result obtain previously: $ab \varepsilon^{n-1}_r + (-1)^{n-r} \varepsilon^{n-1}_r ab.$
On the other hand, the right hand side of~\eqref{deg1hom2} when $0<r<n$ becomes
\begin{align*}
& (\chi\ot 1 - 1\ot \chi)\Big[ \sum_{w=0}^{n}\sum_{j=max\{0,r+w-n\}}^{min\{w,r\}} (-q)^{j(n-r+j-w)} \varepsilon^w_j\ot \varepsilon^{n-w}_{r-j}\Big].
\end{align*}
To get a non-zero term, substitute $w=1,j=0$ then apply $\chi\ot1$, and substitute $w=n-1, j=r$ and apply $1\ot\chi$:
\begin{align*}
 (-q)^{0(n-r+1)} \chi(\varepsilon^{1}_{0}) \varepsilon^{n-1}_{r} -  (-1)^{n-1}(-q)^{r(n-r+r-n+1)} \varepsilon^{n-1}_{r} \chi(\varepsilon^{1}_{0}) = ab\varepsilon^{n-1}_r + (-1)^{n-r}\varepsilon^{n-1}_r ab
\end{align*}
When $r=n$, the left hand side of~\eqref{deg1hom2} becomes
\begin{align*}
(d\psi_\chi -  \psi_\chi d)(\varepsilon^n_n)  &= d(0) -  \psi_\chi( b\varepsilon^{n-1}_{n-1} + (-1)^{n} \varepsilon^{n-1}_{n-1} b)  = 0 - b\cdot0 + (-1)^{n+1} 0\cdot b = 0,
\end{align*}
while the right hand side $(\chi\ot 1 - 1\ot \chi)\Delta_{\K}(\varepsilon^n_n)$ becomes
\begin{align*}
&  (\chi\ot 1 - 1\ot \chi) \sum_{t=0}^{n} \varepsilon^t_1\ot \varepsilon^{n-t}_{n-t}= \chi(\varepsilon^{1}_{1}) \varepsilon^{n-1}_{1} - (-1)^{n-1} \varepsilon^{n-1}_{1} \chi(\varepsilon^{1}_{1})  = 0
\end{align*}
and they are equal. It is also true whenever $r=n+1$:
\begin{align*}
&(d\psi_\chi -  \psi_\chi d)(\varepsilon^n_{n+1})  = d((n-1) \varepsilon^{n}_1c + b \varepsilon^{n}_{n+1} ) -  \psi_\chi( a\varepsilon^{n-1}_n + (-1)^{n} \varepsilon^{n-1}_0 c) \\
&= [(n-1)-(n-2)] a \varepsilon^{n-1}_1 c + (n-1)[(-q)^{n-1}+ (-1)^n] b\varepsilon^{n-1}_0 c \\
&+ (n-1)[(-1)^{n} + (-1)^{n-1}]  \varepsilon^{n-1}_0 bc +(n-1) ba\varepsilon^{n-1}_n - (n-2)ab\varepsilon^{n-1}_n \\
& = (n-1-n+2) ab \varepsilon^{n-1}_n  = ab \varepsilon^{n-1}_n
\end{align*}
is equal to 
\begin{align*}
&(\chi\ot 1 - 1\ot \chi)\Delta_{\K}(\varepsilon^n_{n+1})  = (\chi\ot 1 - 1\ot \chi) \sum_{t=0}^{n} \varepsilon^t_0\ot \varepsilon^{n-t}_{n-t+1} +   \varepsilon^n_{n+1}\ot \varepsilon^{0}_{0}\\
&= \chi\ot 1( \varepsilon^1_{0}\ot \varepsilon^{n-1}_{n}) - 1\ot\chi( (-1)^{n-1}\varepsilon^{n-1}_{0}\ot \varepsilon^{1}_{2}) + (\chi\ot 1 - 1\ot \chi)(\varepsilon^{n}_{n+1}\ot \varepsilon^{0}_{0})\\
& = ab \varepsilon^{n-1}_n.
\end{align*}
We have therefore shown that for $r=0,1,\ldots,n+1$, Equation~\eqref{homolift} holds.
\begin{rema} Table \ref{table 3} shows some bracket computations based on these examples. Recall that $\eta = \begin{pmatrix} a & 0 & 0  \end{pmatrix}$ and  $\chi = \begin{pmatrix} ab & 0 & 0 \end{pmatrix}$ are the degree 1 cocycles with homtopy lifting maps given in~\eqref{deg1eta} and~\eqref{deg1chi} respectively. Also take $\overline{\eta} = \begin{pmatrix} a & 0 & 0 & 0 \end{pmatrix}$ and  $\overline{\chi} = \begin{pmatrix} 0& 0 & ab & 0 \end{pmatrix}$ to be the degree 2 cocycles whose homtopy lifting maps were given in~\eqref{deg2eta} and~\eqref{deg2chi} respectively. Take $\theta = \begin{pmatrix} ab & 0 & 0 & 0 \end{pmatrix}$ to be the second degree 2 cocycle appearing in Table \ref{table 1}. The following bracket structure can be verified by direct computations.
\begin{table}
\begin{tabular}{ |c|c|c|c|c|}  \hline
$[\cdot\;,\cdot]$    & $\eta$  &  $\chi$   & $\overline{\eta}$ & $\overline{\chi}$   \\ \hline
$\eta$ & 0 & 0 & $-\overline{\eta}$ & $\overline{\chi}$ \\ 
$\chi$ & 0 & 0 & $-\theta$ & 0 \\  \hline
$\overline{\eta} $ & $\overline{\eta} $ & $\theta$ & 0 & 0 \\  \hline
$\overline{\chi}$ &  -$\overline{\chi}$ & 0 &  0 & 0 \\  \hline
\end{tabular}
\caption{Some bracket computations}
\label{table 3}
\end{table}
For instance $[\bar{\eta},\eta](\varepsilon^2_0) = \bar{\eta}\psi_{\eta}(\varepsilon^2_0) - \eta\psi_{\bar{\eta}}(\varepsilon^2_0) = \bar{\eta}(2\varepsilon^2_0) - \eta(\varepsilon^1_0) = 2a-a = a = \bar{\eta}(\varepsilon^2_0)$ and  $[\bar{\eta},\eta](\varepsilon^2_r)=0$ for all $r\neq 0$.
\end{rema}

\subsection{Connections to derivation operators}\label{deltaoperators}
To compute the Gerstenhaber bracket of a $1$-cocycle and any $n$-cocycle, M. S\'{u}arez-\'{A}lvarez in \cite{MSA} introduced the idea of derivation operators ( called $\delta$-operators in the general settings). We give the definition of a derivation operator in Lemma \ref{Lemmad}. Since an Hochschild $1$-cocycle can be viewed as a derivation, it was shown that there are derivation operators associated to each $1$-cocycle. In particular if $\delta^e:A^e\xrightarrow{}A^e$ is a derivation, that is $\delta^e(ab)=\delta^e(a)b+a\delta^e(b)$ for all $a,b\in A^e$ and $\mathbb{P}\xrightarrow{}A$ is a projective bimodule resolution of $A$, the derivation operators are chain maps lifting $\delta^e$ from $\mathbb{P}_n$ to $\mathbb{P}_n$. Derivation operators are not module homomorphism in general.

In the build up to this work, we in fact realized Equations \eqref{deg1eta} and \eqref{deg1chi} first as derivation operators before observing that they are homotopy lifting maps. It was remarked in \cite[Example 6.3.8]{HCA}, that for an $n$-cocycle $g$, there is an associated homotopy lifting map $\psi_g$ which can also be realized as a coderivation on the tensor algebra of $A$. If $g$ is a $1$-cocycle, then $\psi_g$ may be extended to a derivation operator $\tilde{g}$ in some sense. This describes how a derivation operator may be obtained from a homotopy lifting map. Question \ref{question_homo-doperator} asks whether the converse of this observation is true.

Our goal in this section is to draw connections between derivation operators and homotopy lifting maps  by showing that the homotopy lifting maps of Equations \eqref{deg1eta} and \eqref{deg1chi} associated with $\eta = \begin{pmatrix} a & 0 & 0  \end{pmatrix}$ and $\chi = \begin{pmatrix} ab & 0 & 0 \end{pmatrix}$ respectively are indeed derivation operators.


\begin{lemma}\cite[Lemma 6.2.2]{HCA}\label{Lemmad}
Let $\gamma:\Lambda\xrightarrow{}\Lambda$ be a derivation. There is a $k$-linear chain map $\tilde{\gamma}_{n}:\K_{n}\xrightarrow{}\K_{n}$ lifting $\gamma$ with the property that for each $n$
\begin{equation}\label{d-operators}
\tilde{\gamma}_{n}(a\varepsilon^n_r b) = \gamma(a)\varepsilon^n_r b + a\tilde{\gamma}_n(\varepsilon^n_r) b + a\varepsilon^n_r\gamma(b) 
\end{equation}
for all $a,b\in\Lambda$ and each basis element $\varepsilon^n_r\in \K_n.$ Moreover $\tilde{\gamma}$ is unique up to $\Lambda^e$-module chain homotopy.
\end{lemma}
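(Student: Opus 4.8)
The plan is to construct $\tilde\gamma$ by induction on homological degree, imitating the proof of the comparison theorem but twisting it by the derivation $\gamma$. The first observation is that the prescribed property \eqref{d-operators} is not $\Lambda^e$-linearity but a Leibniz rule; nevertheless, since $\K_n = \bigoplus_r \Lambda o(f^n_r)\ot_k t(f^n_r)\Lambda$ is $\Lambda^e$-free on the $\varepsilon^n_r$, a map satisfying \eqref{d-operators} is completely determined once the elements $\tilde\gamma_n(\varepsilon^n_r)\in\K_n$ are chosen. First I would check that any such choice does extend to a well-defined $k$-linear map: writing a general element of the $r$-th summand as $x\varepsilon^n_r y$ with $x\in\Lambda o(f^n_r)$ and $y\in t(f^n_r)\Lambda$, the assignment $x\varepsilon^n_r y\mapsto \gamma(x)\varepsilon^n_r y + x\tilde\gamma_n(\varepsilon^n_r)y + x\varepsilon^n_r\gamma(y)$ is $k$-bilinear in $(x,y)$ because $\gamma$ is $k$-linear, hence factors through $\ot_k$.

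For the construction itself I would proceed inductively. For the base case I would choose $\tilde\gamma_0(\varepsilon^0_i)\in\K_0$ to be any preimage under the multiplication map $\mu$ of $\gamma(o(f^0_i))$; writing $e_i=o(f^0_i)=t(f^0_i)$, such a preimage exists because $\gamma(e_i)\in\Lambda e_i\Lambda=\mu(\Lambda e_i\ot_k e_i\Lambda)$, and a short computation with the Leibniz rule then gives $\mu\tilde\gamma_0=\gamma\mu$, i.e. $\tilde\gamma$ lifts $\gamma$. For the inductive step, assume $\tilde\gamma_0,\dots,\tilde\gamma_{n-1}$ have been built as $\gamma$-derivations forming a partial chain map. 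For each $r$ the element $\tilde\gamma_{n-1}d_n(\varepsilon^n_r)$ is a cycle, since $d_{n-1}\tilde\gamma_{n-1}d_n(\varepsilon^n_r)=\tilde\gamma_{n-2}d_{n-1}d_n(\varepsilon^n_r)=0$ by the chain-map relation one degree down together with $d^2=0$ (for $n=1$ one uses $\mu\tilde\gamma_0 d_1=\gamma\mu d_1=0$ instead). Exactness of the resolution $\K$ then provides $\omega_r\in\K_n$ with $d_n(\omega_r)=\tilde\gamma_{n-1}d_n(\varepsilon^n_r)$, and I would set $\tilde\gamma_n(\varepsilon^n_r):=\omega_r$ and extend by \eqref{d-operators}.

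The step I expect to be the main obstacle is verifying that the chain-map identity $d_n\tilde\gamma_n=\tilde\gamma_{n-1}d_n$, which holds on the basis elements by construction, persists on all of $\K_n$; this is genuinely something to check because $\tilde\gamma$ is not $\Lambda^e$-linear. Writing $d_n(\varepsilon^n_r)$ as a $\Lambda^e$-combination of basis elements of $\K_{n-1}$ and expanding both $d_n\tilde\gamma_n(x\varepsilon^n_r y)$ and $\tilde\gamma_{n-1}d_n(x\varepsilon^n_r y)$ via \eqref{d-operators} and the $\Lambda^e$-linearity of $d_n$, the terms in which $\gamma$ differentiates the outer coefficients $x,y$ match immediately as $\gamma(x)d_n(\varepsilon^n_r)y + x\,d_n(\varepsilon^n_r)\gamma(y)$, while the remaining terms collapse, using the Leibniz rule for $\tilde\gamma_{n-1}$ in reverse, to $x\,\tilde\gamma_{n-1}(d_n(\varepsilon^n_r))\,y$; this equals $x\,d_n(\tilde\gamma_n(\varepsilon^n_r))\,y$ precisely by the defining property $d_n(\omega_r)=\tilde\gamma_{n-1}d_n(\varepsilon^n_r)$. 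The cancellation here is exactly where the hypothesis that $\gamma$ is a derivation is used.

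Finally, for uniqueness up to $\Lambda^e$-module chain homotopy, I would take two lifts $\tilde\gamma,\tilde\gamma'$ and set $\phi:=\tilde\gamma-\tilde\gamma'$. Subtracting the two instances of \eqref{d-operators} cancels the $\gamma(x)\varepsilon^n_r y$ and $x\varepsilon^n_r\gamma(y)$ terms, so $\phi(x\varepsilon^n_r y)=x\phi(\varepsilon^n_r)y$; that is, $\phi$ is an honest $\Lambda^e$-module chain endomorphism of $\K$ that lifts the zero endomorphism of $\Lambda$ (since $\mu\phi=\gamma\mu-\gamma\mu=0$). The usual comparison theorem for projective resolutions, as recalled in Section \ref{comp}, then shows $\phi$ is $\Lambda^e$-chain homotopic to $0$, which is the asserted uniqueness.
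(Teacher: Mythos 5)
The paper offers no proof of this lemma at all — it is imported verbatim from \cite[Lemma 6.2.2]{HCA} (ultimately Su\'arez-\'Alvarez's construction of $\delta$-operators in \cite{MSA}) — so there is nothing in the text to compare against. Your argument is correct and is essentially the standard one, specialized to the explicit $\Lambda^e$-free basis $\{\varepsilon^n_r\}$ of $\K_n$: the $k$-bilinearity check makes the Leibniz extension well defined on each summand $\Lambda o(f^n_r)\ot_k t(f^n_r)\Lambda$; the chain-map identity one degree down supplies the cycle condition needed to choose $\omega_r$ by exactness; the Leibniz cancellation in verifying $d_n\tilde{\gamma}_n=\tilde{\gamma}_{n-1}d_n$ on non-basis elements is exactly where the derivation hypothesis enters; and the observation that the difference of two lifts is honestly $\Lambda^e$-linear and lifts zero reduces uniqueness to the ordinary comparison theorem recalled in Section \ref{comp}. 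The only micro-point worth recording is the one you already made in the base case, namely $\gamma(e_i)=\gamma(e_i)e_i+e_i\gamma(e_i)\in\Lambda e_i\Lambda=\mu\big(\Lambda e_i\ot_k e_i\Lambda\big)$, so the required preimage exists inside the $i$-th summand of $\K_0$.
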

The chain map $\tilde{\gamma}_{n}$ is called a derivation operator. The following theorem due to M. S\'{u}arez-\'{A}lvarez provides a way for computing the bracket $[\HH^1(\Lambda),\HH^m(\Lambda)]$ on Hochschild cohomology. It was first given in \cite[Lemma 1.8]{MSA} and reformulated by S. Witherspoon in \cite[Theorem 6.2.5]{HCA}.
\begin{theo}\cite{MSA, HCA}\label{derivation_gbrac}
Let $\eta:\Lambda\xrightarrow{}\Lambda$ be a derivation. Let $\K$ be a projective resolution of $\Lambda$ as a $\Lambda^e$-module. Let $\chi\in\HHom_{\Lambda^e}(\K_n,\Lambda)$ be a cocycle and let $\tilde{\eta}_{n}:\K_{n}\xrightarrow{}\K_{n}$ be derivation operators satisfying Equation \eqref{d-operators}. The Gerstenhaber bracket of $\eta$ and $\chi$ is represented by
\begin{equation*}
[\eta,\chi] = \eta\chi - \chi\tilde{\eta}_n
\end{equation*}
as a cocycle on $\K_n$.
\end{theo}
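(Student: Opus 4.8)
The plan is to reduce to the reduced bar resolution $\mathcal{B}$, where the bracket can be computed from its original combinatorial definition \eqref{gbrac}, and then to carry the answer back to $\K$ along the embedding $\iota\colon\K\to\mathcal{B}$, using the fact (Section~\ref{comp}) that the Gerstenhaber bracket is independent of the chosen $\Lambda^e$-projective resolution. First I would record the one structural feature of $\eta$ that is used: since $\eta$ is a derivation, $\eta(1)=\eta(1\cdot 1)=2\eta(1)$ forces $\eta(1)=0$, and likewise $\eta$ annihilates each idempotent. Because $\eta$ has degree $1$, the sign $(-1)^{(m-1)(n-1)}$ of \eqref{gbrac} with $m=1$ is trivial, and the chain-level bracket collapses to $[\eta,\chi]=\eta\circ\chi-\chi\circ\eta$.

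Next I would evaluate the two circle products on $\mathcal{B}$. Let $\tilde{\eta}^{\mathcal{B}}(a_0\ot\cdots\ot a_{n+1})=\sum_{i=0}^{n+1}a_0\ot\cdots\ot\eta(a_i)\ot\cdots\ot a_{n+1}$ denote the canonical derivation operator on the bar complex. Since $\eta$ has a single input slot, $\eta\circ\chi$ is simply $\eta$ applied to the output of $\chi$, that is $\eta\circ\chi=\eta\chi$. On a reduced generator $1\ot a_1\ot\cdots\ot a_n\ot 1$ the two boundary summands ($i=0$ and $i=n+1$) of $\tilde{\eta}^{\mathcal{B}}$ contribute $\eta(1)=0$, whence $\chi\,\tilde{\eta}^{\mathcal{B}}=\sum_{j=1}^{n}\chi(a_1\ot\cdots\ot\eta(a_j)\ot\cdots\ot a_n)=\chi\circ\eta$. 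Therefore $[\eta,\chi]=\eta\chi-\chi\,\tilde{\eta}^{\mathcal{B}}$ on $\mathcal{B}$, which is exactly the asserted formula for the bar resolution.

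To transport this to $\K$, note that since $\iota^{*}$ is an isomorphism on cohomology we may, after replacing $\chi$ by a cohomologous cocycle, assume $\chi=\chi^{\mathcal{B}}\iota$ for a bar cocycle $\chi^{\mathcal{B}}$; then $(\eta\chi^{\mathcal{B}})\iota=\eta\chi$ immediately. For the remaining term, observe that $\tilde{\eta}^{\mathcal{B}}\iota$ and $\iota\,\tilde{\eta}_n$ are two $k$-linear chain maps $\K\to\mathcal{B}$ lifting the derivation $\eta$; their difference is $\Lambda^e$-linear (the Leibniz parts cancel) and induces the zero map on $\Lambda$ through the augmentations, hence is $\Lambda^e$-null-homotopic by the uniqueness clause of Lemma~\ref{Lemmad}. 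Writing $\tilde{\eta}^{\mathcal{B}}\iota-\iota\,\tilde{\eta}_n=\mathbf{d}(h)$ for an $\Lambda^e$-linear homotopy $h$ and using that $\chi^{\mathcal{B}}$ is a cocycle, the discrepancy $\chi^{\mathcal{B}}(\tilde{\eta}^{\mathcal{B}}\iota-\iota\,\tilde{\eta}_n)=\chi^{\mathcal{B}}\,h\,d^{\K}$ is a coboundary on $\K$. Thus $(\eta\chi^{\mathcal{B}}-\chi^{\mathcal{B}}\tilde{\eta}^{\mathcal{B}})\iota$ and $\eta\chi-\chi\,\tilde{\eta}_n$ differ by a coboundary, and since $\iota^{*}$ realizes the isomorphism $\mathrm{H}^{*}(\mathcal{B},\Lambda)\xrightarrow{\sim}\mathrm{H}^{*}(\K,\Lambda)$ compatibly with the bracket, $\eta\chi-\chi\,\tilde{\eta}_n$ represents $[\eta,\chi]$. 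As a consistency check that this class is well defined one computes directly $(\eta\chi-\chi\,\tilde{\eta}_n)d=\eta(\chi d)-\chi\,d\,\tilde{\eta}_{n+1}=0$, using $\chi d=0$ and the chain-map identity $\tilde{\eta}_n d=d\,\tilde{\eta}_{n+1}$.

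I expect the main obstacle to be this transport step, and specifically the requirement that $\tilde{\eta}^{\mathcal{B}}\iota$ and $\iota\,\tilde{\eta}_n$ be homotopic through an honestly $\Lambda^e$-linear homotopy, so that post-composition with the $\Lambda^e$-linear cocycle $\chi^{\mathcal{B}}$ turns the discrepancy into a coboundary. The delicacy is that the derivation operators are only $k$-linear, and it is precisely the cancellation of their Leibniz parts — together with the uniqueness statement of Lemma~\ref{Lemmad} — that rescues $\Lambda^e$-linearity of the difference. One could instead try to run Volkov's Theorem~\ref{gbrac1} directly on $\K$ with the diagonal $\Delta_{\K}$, obtaining $[\eta,\chi]=\eta\psi_\chi-\chi\psi_\eta$, but one then still has to match the $\Lambda^e$-linear homotopy liftings $\psi_\chi,\psi_\eta$ to $\chi$ and to the $k$-linear operator $\tilde{\eta}_n$, a matching that is most transparent after passing to $\mathcal{B}$ as above.
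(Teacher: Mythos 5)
The paper offers no proof of this statement: it is imported from \cite{MSA} (Lemma~1.8) as reformulated in \cite[Theorem 6.2.5]{HCA}, so there is no in-paper argument to measure you against. Judged on its own, your reconstruction follows the same architecture as Su\'arez-\'Alvarez's original proof: compute the bracket on the bar resolution, where for a degree-$1$ cochain the circle-product formula \eqref{gbrac} has trivial signs and collapses to $\eta\chi-\chi\circ\eta$ with $\chi\circ\eta=\chi\,\tilde{\eta}^{\mathcal{B}}$ for the slot-wise derivation operator $\tilde{\eta}^{\mathcal{B}}$; then transport to $\K$ by noting that $\tilde{\eta}^{\mathcal{B}}\iota-\iota\,\tilde{\eta}$ is a genuinely $\Lambda^e$-linear chain map lifting the zero map (the Leibniz terms cancel against each other), hence null-homotopic through a $\Lambda^e$-linear homotopy, so that post-composition with the cocycle $\chi^{\mathcal{B}}$ yields a coboundary. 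That is the correct mechanism, and your identification of the $\Lambda^e$-linearity of the difference as the delicate point is exactly right. (Strictly, the null-homotopy comes from the comparison theorem for projective resolutions rather than from the uniqueness clause of Lemma~\ref{Lemmad}, which concerns two operators on the same resolution, but the argument is identical.)

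Two repairs are needed. First, your claim that a derivation annihilates each idempotent is false: from $\eta(e)=\eta(e)e+e\eta(e)$ one only gets $e\eta(e)e=0$, and the inner derivation $x\mapsto ax-xa$ on a path algebra with an arrow $a$ from vertex $1$ to vertex $2$ sends $e_1$ to $-a\neq 0$; only $\eta(1)=0$ is automatic. This matters because $\tilde{\eta}^{\mathcal{B}}$ is only well defined on the \emph{reduced} bar resolution $\Lambda^{\ot_{\Lambda_0}(n+2)}$ when $\eta$ vanishes on $\Lambda_0$ (otherwise the term $x\eta(e)\ot y - x\ot\eta(e)y$ survives in the tensor product over $\Lambda_0$). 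The fix is standard: since $\Lambda_0$ is separable, every class in $\HH^1(\Lambda)$ has a representative vanishing on $\Lambda_0$, so normalize $\eta$ by an inner derivation first, or run the argument on the unreduced bar resolution. Second, having replaced $\chi$ by a cohomologous cocycle of the form $\chi^{\mathcal{B}}\iota$, you must also verify that the class of $\eta\chi-\chi\tilde{\eta}_n$ is unchanged under $\chi\mapsto\chi+g\,d$; your closing computation only shows the expression is a cocycle. The check is immediate from the chain-map identity $d\,\tilde{\eta}_{n}=\tilde{\eta}_{n-1}\,d$: the formula changes by $(\eta g-g\,\tilde{\eta}_{n-1})d$, a coboundary. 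With these two patches the argument is complete.
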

\begin{prop}
When regarded as $k$-linear chain maps, the homotopy lifting maps of Equations \eqref{deg1eta} and \eqref{deg1chi} associated with $\eta = \begin{pmatrix} a & 0 & 0  \end{pmatrix}$ and $\chi = \begin{pmatrix} ab & 0 & 0 \end{pmatrix}$ respectively are indeed derivation operators.
\end{prop}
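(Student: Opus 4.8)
The plan is to exploit the standard dictionary between degree-$1$ Hochschild cocycles and derivations of $\Lambda_q$, together with the homotopy-lifting identities already verified in \eqref{deg1hom1} and \eqref{deg1hom2}. First I would attach to $\eta=\begin{pmatrix} a&0&0\end{pmatrix}$ the derivation $\gamma_\eta:\Lambda_q\to\Lambda_q$ determined on arrows by $\gamma_\eta(f^1_p)=\eta(\varepsilon^1_p)$, so $a\mapsto a,\ b\mapsto 0,\ c\mapsto 0$, and likewise $\gamma_\chi$ with $a\mapsto ab,\ b\mapsto 0,\ c\mapsto 0$. Before anything else one must check these descend to $\Lambda_q=kQ/I_q$, i.e. that $\gamma(I_q)\subseteq I_q$; evaluating on the generators $a^2,b^2,ab-qba,ac$ of $I_q$ gives $\gamma_\eta(a^2)=2a^2$, $\gamma_\eta(ab-qba)=ab-qba$, $\gamma_\eta(ac)=ac$, all in $I_q$, while for $\gamma_\chi$ the relations close up precisely when $q=1$ (consistent with the hypothesis $q=1$ in \eqref{deg1chi}). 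I then define $k$-linear maps $\tilde\gamma_n:\K_n\to\K_n$ by assigning to the distinguished generators $\varepsilon^n_r$ the very same values recorded in \eqref{deg1eta} (resp. \eqref{deg1chi}) and extending by the Leibniz rule \eqref{d-operators}. By construction \eqref{d-operators} holds, so the content is to show $\tilde\gamma$ is a chain map lifting $\gamma$, after which it meets the definition of a derivation operator guaranteed by Lemma~\ref{Lemmad}.

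The key structural point, and what makes the verification short, is that the extension rule for $\tilde\gamma$ (Leibniz) differs from the $\Lambda^e$-module extension defining the homotopy lifting $\psi_\eta$, yet the two agree on the generators $\varepsilon^n_r$. I would first observe that $F:=d\tilde\gamma-\tilde\gamma d$ is $\Lambda^e$-linear: using that $d$ is a bimodule map and $\tilde\gamma$ satisfies \eqref{d-operators}, a direct computation gives $d\tilde\gamma(amb)-\tilde\gamma d(amb)=a\big(d\tilde\gamma(m)-\tilde\gamma d(m)\big)b$, the $\gamma$-cross terms cancelling. Consequently $F=0$ as soon as $F(\varepsilon^n_r)=0$ on every module generator, so the chain-map condition is reduced to the finitely many $\varepsilon^n_r$ in each degree.

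To finish I would compare on generators. Since $\tilde\gamma$ and $\psi_\eta$ agree on the $\varepsilon^n_r$, we have $d\tilde\gamma(\varepsilon^n_r)=d\psi_\eta(\varepsilon^n_r)$. Writing $d(\varepsilon^n_r)=\sum_j\big(\sum_p c_{pj}(n,r,1)f^1_p\varepsilon^{n-1}_j+(-1)^n\sum_q c_{jq}(n,r,n-1)\varepsilon^{n-1}_j f^1_q\big)$ from \eqref{diff-k} and applying Leibniz, the difference $\tilde\gamma d(\varepsilon^n_r)-\psi_\eta d(\varepsilon^n_r)$ collapses to $\sum_j\big(\sum_p c_{pj}(n,r,1)\gamma(f^1_p)\varepsilon^{n-1}_j+(-1)^n\sum_q c_{jq}(n,r,n-1)\varepsilon^{n-1}_j\gamma(f^1_q)\big)$, because $(\tilde\gamma-\psi_\eta)(\varepsilon^{n-1}_j)=0$ and only the $\gamma$-contributions to the outer factors survive. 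Using $\gamma(f^1_p)=\eta(\varepsilon^1_p)$ together with the explicit diagonal \eqref{diag-k}, this last expression is exactly $(\eta\ot 1-1\ot\eta)\Delta_{\K}(\varepsilon^n_r)$, the Koszul sign $(-1)^{n-1}$ carried by $1\ot\eta$ turning, under the outer minus sign, into the $(-1)^n$ above. Hence $F(\varepsilon^n_r)=\big(d\psi_\eta-\psi_\eta d-(\eta\ot 1-1\ot\eta)\Delta_{\K}\big)(\varepsilon^n_r)=0$ by the homotopy-lifting property \eqref{deg1hom1} (resp. \eqref{deg1hom2}). A one-line check that $\mu\tilde\gamma_0=\gamma\mu$ on $\K_0$ shows $\tilde\gamma$ lifts $\gamma$, so $\tilde\gamma$ is a derivation operator whose generator values are those of the homotopy liftings, proving the proposition.

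The hard part is not the chain-map reduction, which is formal, but the sign- and index-bookkeeping in the final matching: one must confirm term by term that the Leibniz cross-terms of $\tilde\gamma d-\psi_\eta d$ coincide with $(\eta\ot 1-1\ot\eta)\Delta_{\K}$, including the $1\ot\eta$ Koszul sign and the well-definedness of $\gamma$ on the quotient bimodule $\Lambda o(f^n_i)\ot_k t(f^n_i)\Lambda$. As a cross-check, or as a self-contained alternative, one can instead substitute the explicit differentials $d_n$ displayed for this family and verify \eqref{d-operators} and $d\tilde\gamma=\tilde\gamma d$ by hand on each $\varepsilon^n_r$, which is longer but entirely mechanical.
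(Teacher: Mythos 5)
Your argument is correct, and it takes a genuinely different route from the paper. The paper proves the proposition by brute force: it reads $\eta$ and $\chi$ as derivations on the arrows, extends the generator values of $\psi_{\eta}$ and $\psi_{\chi}$ by the Leibniz rule \eqref{d-operators}, and then verifies $\tilde\gamma_{n-1}d_n=d_n\tilde\gamma_n$ by substituting the explicit differentials of the family $\Lambda_q$ and working through the cases $0<r\le n$, $r=n$, $r=n+1$ (and, for $\chi$, the parity of $r$) — essentially repeating computations parallel to the earlier homotopy-lifting verification. You instead observe that the defect $F=d\tilde\gamma-\tilde\gamma d$ of a Leibniz-extended map is $\Lambda^e$-linear, reduce the chain-map condition to the generators, and there identify $F(\varepsilon^n_r)$ with $\bigl(d\psi_\eta-\psi_\eta d-(\eta\ot1-1\ot\eta)\Delta_{\K}\bigr)(\varepsilon^n_r)$, which vanishes by the already-established \eqref{deg1hom1} and \eqref{deg1hom2}. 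Your matching of the Leibniz cross-terms with $(\eta\ot1-1\ot\eta)\Delta_{\K}$ is right: only the $v=1$ and $v=n-1$ components of \eqref{diag-k} survive, and these carry the same scalars $c_{p,j}(n,r,1)$ and $c_{j,q}(n,r,n-1)$ as the differential \eqref{diff-k}, with the Koszul sign working out as you say. This buys more than the paper's computation does: it shows that re-extending any $\Lambda^e$-linear homotopy lifting of a $1$-cocycle by the Leibniz rule yields a derivation operator, which speaks directly to Question \ref{question_homo-doperator}. Two small caveats. First, your parenthetical that $\gamma_\chi$ preserves $I_q$ ``precisely when $q=1$'' is not quite right: $aba$, $a^2b$, $ab^2$, $bab$ and $abc$ all lie in $I_q$ for every $q$, so $\gamma_\chi$ descends in general; $q=1$ is only needed for \eqref{deg1chi} to be a homotopy lifting. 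Second, the ``one-line check'' at $\K_0$ requires reading \eqref{deg1eta} as giving $\tilde\gamma_0(\varepsilon^0_1)=0$ rather than the literal $(n-1)\varepsilon^0_1=-\varepsilon^0_1$ at $n=0$ — an edge case the paper's own proof also passes over.
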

\begin{proof}\label{homotopylift_doperators}
We make a slight change of notation by defining
\begin{equation}\label{deg1eta-delta}
\tilde{\eta}_n(\varepsilon^n_r) :=\psi_{\eta_n}(\varepsilon^n_r) =  \begin{cases} (n-r)\varepsilon^n_r & {\rm when}\;r=0,1,2,\ldots,n \\ (n-1)\varepsilon^n_r & {\rm when}\;r=n+1.\end{cases}
\end{equation}
to distinguish derivation operators $\tilde{\eta}_{\bullet}$ from homotopy lifting maps $\psi_{\eta_{\bullet}}$. Since $\varepsilon^1_i$ is generated from $(o(f^1_i)\ot_k t(f^1_i))$ which is equal to $a\ot_k a$ when $i=0$, $b\ot_k b$ when $i=1$ and $c\ot_k c$ when $i=2$, we take $\eta = \begin{pmatrix} a & 0 & 0  \end{pmatrix}$ to mean $\eta(a)=a, \eta(b)=0$ and $\eta(c)=0$ as a derivation. The next calculation shows that derivation operators are chain maps i.e. $\tilde{\eta}_{n-1}d_n = d_n\tilde{\eta}_n$ for all $n$. Whenever $0<r\leq n$, the expression $\tilde{\eta}_{n-1}d_n(\varepsilon^n_r)$ is equal to
\begin{align*}
 &\tilde{\eta}_{n-1}\Big[ (1-\partial_{n,r})[a\varepsilon^{n-1}_r+(-1)^{n-r}q^r\varepsilon^{n-1}_r a] + (1-\partial_{r,0})[(-q)^{n-r}b\varepsilon^{n-1}_{r-1}+(-1)^{n}\varepsilon^{n-1}_{r-1} b]\Big]\\
&=  (1-\partial_{n,r})[\eta(a)\varepsilon^{n-1}_r) + a\tilde{\eta}_{n-1}(\varepsilon^{n-1}_r) +(-1)^{n-r}q^r\tilde{\eta}_{n-1}(\varepsilon^{n-1}_r) a + (-1)^{n-r}q^r\varepsilon^{n-1}_r \eta(a)]\\
&+ (1-\partial_{r,0})[(-q)^{n-r}\eta(b)\varepsilon^{n-1}_{r-1} + (-q)^{n-r}b\tilde{\eta}_{n-1}(\varepsilon^{n-1}_{r-1})+(-1)^{n}\tilde{\eta}_{n-1}(\varepsilon^{n-1}_{r-1}) b+(-1)^{n}\varepsilon^{n-1}_{r-1} \eta(b)]\\
&=  (1-\partial_{n,r})[(n-r) a\varepsilon^{n-1}_r  +(-1)^{n-r}q^r(n-r)\varepsilon^{n-1}_ra]\\
&+ (1-\partial_{r,0})[(n-r)(-q)^{n-r}b\varepsilon^{n-1}_{r-1}+(n-r)(-1)^{n}\varepsilon^{n-1}_{r-1} b]\\
&(n-r) \Big[ (1-\partial_{n,r})[a\varepsilon^{n-1}_r+(-1)^{n-r}q^r\varepsilon^{n-1}_r a] + (1-\partial_{r,0})[(-q)^{n-r}b\varepsilon^{n-1}_{r-1}+(-1)^{n}\varepsilon^{n-1}_{r-1} b]\Big]
\end{align*}
while the expression $d_n\tilde{\eta}_n(\varepsilon^n_r) = d_n((n-r)\varepsilon^n_r)$ is equal to
\begin{align*}
 &(n-r) \Big[ (1-\partial_{n,r})[a\varepsilon^{n-1}_r+(-1)^{n-r}q^r\varepsilon^{n-1}_r a] + (1-\partial_{r,0})[(-q)^{n-r}b\varepsilon^{n-1}_{r-1}+(-1)^{n}\varepsilon^{n-1}_{r-1} b]\Big].
\end{align*}
When $r=n+1$, we get $\tilde{\eta}_{n-1}d_n(\varepsilon^n_{n+1})$ equal to
\begin{align*}
 &\tilde{\eta}_{n-1}[a\varepsilon^{n-1}_n+(-1)^{n}\varepsilon^{n-1}_0 c]  =\eta( a)\varepsilon^{n-1}_n+ a\tilde{\eta}_{n-1}(\varepsilon^{n-1}_n)  +(-1)^{n}\tilde{\eta}_{n-1}(\varepsilon^{n-1}_0) c+ (-1)^{n}\varepsilon^{n-1}_0 \eta(c)\\ 
&= (n-1)a\varepsilon^{n-1}_n +(n-1)(-1)^{n}\varepsilon^{n-1}_0 c = (n-1)[a\varepsilon^{n-1}_n+(-1)^{n}\varepsilon^{n-1}_0 c].
\end{align*}
while $d_n\tilde{\eta}_n(\varepsilon^n_{n+1}) = d_n((n-1)\varepsilon^n_{n+1})$ is equal to $(n-1)[a\varepsilon^{n-1}_n+(-1)^{n}\varepsilon^{n-1}_0 c].$

Similarly if we consider $\chi = \begin{pmatrix} ab & 0 & 0  \end{pmatrix}$ to mean $\chi(a)=ab, \chi(b)=0$ and $\chi(c)=0$ as a derivation, the following is a derivation operator associated with $\chi$.
\begin{equation}\label{deg1chi-delta}
\tilde{\chi}_n(\varepsilon^n_r):=\psi_{\chi_n}(\varepsilon^n_r) = \begin{cases} (\frac{1+(-1)^r}{2})(-1)^{n+1}a\varepsilon^n_{r+1} + (n-r)\varepsilon^n_r b  & \;\;r=0,1,2,\ldots,n-1, \\  0 & \;\;r=n,\\  (n-1)( b\varepsilon^n_r+\varepsilon^n_1c ) & \;\;r=n+1 , 
\end{cases}\end{equation}
For instance whenever $0\leq r< n-1$ and $r$ is even, then $\tilde\chi_{n}(\varepsilon^{n}_{r}) = (n-r)\varepsilon^{n}_{r}b + (-1)^{n+1}a\varepsilon^{n}_{r+1}$ and $\tilde\chi_{n}(\varepsilon^{n}_{r-1}) = (n-r+1)\varepsilon^{n}_{r-1}b.$ Therefore the expression $\tilde\chi_{n} d_{n+1}(\varepsilon^{m+1}_{r})$ is equal to
\begin{align*}
 & \tilde\chi_{n} {[}a\varepsilon^{n}_r+(-1)^{n+1-r}\varepsilon^{n}_r a 
 + (-1)^{n+1-r}b\varepsilon^{n}_{r-1}+(-1)^{n+1}\varepsilon^{n}_{r-1} b{]}\\
& = \chi(a)\varepsilon^{n}_r +(-1)^{n+1-r}\varepsilon^{n}_r \chi(a) + a\tilde\chi_n(\varepsilon^{n}_r) +(-1)^{n+1-r}\tilde\chi_n(\varepsilon^{n}_r) a \\
&+\chi(b)\varepsilon^{n}_{r-1} +(-1)^{n+1}\varepsilon^{n}_{r-1} \chi(b) + b\tilde\chi_n(\varepsilon^{n}_{r-1}) +(-1)^{n+1}\tilde\chi_n(\varepsilon^{n}_{r-1}) b\\
& = ab\varepsilon^{n}_r +(-1)^{n+1-r}\varepsilon^{n}_{r} ab + a[(n-r)\varepsilon^{n}_r b + (-1)^{n+1}a\varepsilon^{n}_{r+1}]\\ 
 &+ (-1)^{n-r+1} [(n-r)\varepsilon^{n}_{r}b + (-1)^{n+1}a\varepsilon^{n}_{r+1})]a\\
 & + (-1)^{n-r+1} b[(n-r+1)\varepsilon^{n}_{r-1}b] + (-1)^{n+1}[(n-r+1)\varepsilon^{n}_{r-1}b]b\\
& = (-1)^{2n+2-r} ab\varepsilon^{n}_r +(-1)^{n+1-r}(n+1-r)\varepsilon^{n}_{r} ab + (n-r)a\varepsilon^{n}_r b \\
&+ (-1)^{2n+2-r}a\varepsilon^{n}_{r+1}a  + (-1)^{n-r+1}(n-r+1) b\varepsilon^{n}_{r-1}b,
\end{align*}
which is the same as
\begin{align*}
  d_{n+1}\tilde{\chi}_{n+1}(\varepsilon^{n+1}_{r}) &=  d_{n+1}((n+1-r)\varepsilon^{n+1}_rb +(-1)^{n+2}a\varepsilon^{n+1}_{r+1})\\ 
 &= (n+1-r)(a\varepsilon^{n}_{r}+(-1)^{n+1-r}\varepsilon^{n}_{r} a + (-1)^{n+1-r}b\varepsilon^{n}_{r-1}+(-1)^{n+1}\varepsilon^{n}_{r-1} b)b\\
& + (-1)^{n+2}a (a\varepsilon^{n}_{r+1}+(-1)^{n-r}\varepsilon^{n}_{r+1} a + (-1)^{n-r}b\varepsilon^{n}_{r}+(-1)^{n+1}\varepsilon^{n}_{r} b)\\
& = (-1)^{2n+2-r}ab\varepsilon^{n}_r +(-1)^{n+1-r}(n+1-r)\varepsilon^{n}_{r} ab + (n-r)a\varepsilon^{n}_r b \\
&+ (-1)^{2n+2-r}a\varepsilon^{n}_{r+1}a  + (-1)^{n-r+1}(n-r+1) b\varepsilon^{n}_{r-1}b.
\end{align*}
If it is the case that $r$ is odd, we would have $\tilde\chi_{n}(\varepsilon^{n}_{r}) = (n-r)\varepsilon^{n}_{r}b $ and 
$\tilde\chi_{n}(\varepsilon^{n}_{r-1}) = (n-r+1)\varepsilon^{n}_{r-1}b + (-1)^{n+1}a\varepsilon^{n}_{r}.$ Going through the same calculations, we would have
\begin{align*}
 \tilde\chi_{n} d_{n+1}(\varepsilon^{n+1}_{r}) &= \tilde\chi_{n} {[}a\varepsilon^{n}_r+(-1)^{n+1-r}\varepsilon^{n}_r a 
 + (-1)^{n+1-r}b\varepsilon^{n}_{r-1}+(-1)^{n+1}\varepsilon^{n}_{r-1} b{]}\\
& = \chi(a)\varepsilon^{n}_r +(-1)^{n+1-r}\varepsilon^{n}_r \chi(a) + a\tilde\chi_n(\varepsilon^{n}_r) +(-1)^{n+1-r}\tilde\chi_n(\varepsilon^{n}_r) a \\
&+\chi(b)\varepsilon^{n}_{r-1} +(-1)^{n+1}\varepsilon^{n}_{r-1} \chi(b) + b\tilde\chi_n(\varepsilon^{n}_{r-1}) +(-1)^{n+1}\tilde\chi_n(\varepsilon^{n}_{r-1}) b\\
& = ab\varepsilon^{n}_r +(-1)^{n+1-r}\varepsilon^{n}_{r} ab + a[(n-r)\varepsilon^{n}_r b ] + (-1)^{n-r+1} [(n-r)\varepsilon^{n}_{r}b ]a\\
 & + (-1)^{n-r+1} b[(n-r+1)\varepsilon^{n}_{r-1}b + (-1)^{n+1}a\varepsilon^{n}_{r}] \\
 & + (-1)^{n+1}[(n-r+1)\varepsilon^{n}_{r-1}b + (-1)^{n+1}a\varepsilon^{n}_{r}]b\\
& = (-1)^{n+1-r}(n+1-r)\varepsilon^{n}_{r} ab + (n-r+1)a\varepsilon^{n}_r b  + (-1)^{n-r+1}(n-r+1) b\varepsilon^{n}_{r-1}b\\
& = (n+1-r){[}(-1)^{n+1-r}\varepsilon^{n}_{r} ab + a\varepsilon^{n}_r b  + (-1)^{n-r+1} b\varepsilon^{n}_{r-1}b{]}
\end{align*}
which is the same as
\begin{align*}
 d_{n+1}\tilde\chi_{n+1}(\varepsilon^{n+1}_{r}) &=  d_{n+1}((n-r+1)\varepsilon^{n+1}_{r}b )\\ 
 &= (n-r+1)(a\varepsilon^{n}_{r}+(-1)^{n+1-r}\varepsilon^{n}_{r} a + (-1)^{n+1-r}b\varepsilon^{n}_{r-1}+(-1)^{n+1}\varepsilon^{n}_{r-1} b)b\\
 &= (n-r+1)(a\varepsilon^{n}_{r}b +(-1)^{n+1-r}\varepsilon^{n}_{r} ab + (-1)^{n+1-r}b\varepsilon^{n}_{r-1}b) \\
 & = (n+1-r){[}(-1)^{n+1-r}\varepsilon^{n}_{r} ab + a\varepsilon^{n}_r b  + (-1)^{n-r+1} b\varepsilon^{n}_{r-1}b{]}.
\end{align*}
Whenever $r=n,$ the expression $\tilde\chi_{n} d_{n+1}(\varepsilon^{n+1}_{n+1})$ is equal to
\begin{align*}
& \tilde\chi_{n} {[}b\varepsilon^{n}_{n}+(-1)^{m}\varepsilon^{m}_{m} b{]}= \chi(b)\varepsilon^{n}_{n} + b\tilde\chi_n(\varepsilon^{n}_{n})+(-1)^{n}\tilde\chi_n(\varepsilon^{n}_{n}) b + (-1)^{n}\varepsilon^{n}_{n} \chi(b)  \\
& = 0\cdot\varepsilon^{n}_{n} + b\cdot 0 + (-1)^{n}0\cdot b + (-1)^{n}\varepsilon^{n}_{n} \cdot 0 = 0 =  d_{n+1}\tilde\chi_{n+1}(\varepsilon^{n+1}_{n+1}).
\end{align*}
It is also true that whenever $r=m+1$, the expression $\tilde\chi_{n} d_{n+1}(\varepsilon^{n+1}_{n+2}) $ which is equal to
\begin{align*}
 & \tilde\chi_{n} {[}a\varepsilon^{n}_{n+1}+(-1)^{n+1}\varepsilon^{n}_{0} c{]}= \chi(a)\varepsilon^{n}_{n+1} + (-1)^{n+1}\varepsilon^{n}_{0} \chi(c) + a\tilde\chi_n(\varepsilon^{n}_{n+1})+(-1)^{n+1}\tilde\eta_n(\varepsilon^{n}_{0}) c   \\
& = ab\varepsilon^{n}_{n+1} + a[(n-1)\varepsilon^{n}_{1} c + (n-1)b\varepsilon^{n}_{n+1}] +(-1)^{n+1}[n\varepsilon^{n}_{0} b + (-1)^{n+1}a\varepsilon^{n}_{1} ]c    \\
& = nab\varepsilon^{n}_{n+1} + na\varepsilon^{n}_{1} c + (-1)^{n+1}n\varepsilon^{n}_{0} bc 
\end{align*}
is the same as the expression $ d_{n+1}\tilde\chi_{n+1}(\varepsilon^{n+1}_{n+2}) $ which is given by
\begin{align*}
&d_{n+1}(n\varepsilon^{n+1}_{1}c + nb\varepsilon^{n+1}_{n+2}) = n( a\varepsilon^{n}_{1} + (-1)^{n}\varepsilon^{n}_{1}a + (-1)^n b\varepsilon^{n}_{0} + (-1)^{n+1}\varepsilon^{n}_{0}b] c + n b[a\varepsilon^{n}_{n+1} + (-1)^{n+1}\varepsilon^{n}_{0}c)\\
& = nab\varepsilon^{n}_{n+1} + na\varepsilon^{n}_{1} c + (-1)^{n+1}n\varepsilon^{n}_{0} bc.
\end{align*}
\end{proof}
\begin{rema}
Using the bracket definition by derivation operators given by Theorem \ref{derivation_gbrac}, we realize some computations already given in Table \ref{table 3}. For instance $[\eta,\chi](\varepsilon^1_0) = \eta\chi(\varepsilon^1_0) - \chi\tilde{\eta}_1(\varepsilon^1_0) = \eta(ab) - \chi(\varepsilon^1_0) = \eta(a)b+a\eta(b) - ab = ab + a\cdot 0 - ab = 0$ and $[\eta,\chi](\varepsilon^1_i) = 0$ for $i=1,2.$ So $[\eta,\chi]=0.$ Similarly $[\eta,\bar{\chi}](\varepsilon^2_2) = \eta\bar{\chi}(\varepsilon^2_2) - \bar{\chi}\tilde{\eta}_2(\varepsilon^2_2) = \eta(ab) - \bar{\chi}(0) = \eta(a)b+a\eta(b) - 0 = ab$ and $[\eta,\bar{\chi}](\varepsilon^2_j) = 0$ for $j=0,1,3.$ So $[\eta,\bar{\chi}]=\bar{\chi}.$
\end{rema}

\begin{quest}\label{question_homo-doperator}
Proposition \ref{homotopylift_doperators} shows examples of derivation operators that are homotopy lifting maps. Is this observation true in the following sense: Suppose for a $k$-algebra $A$ and a projective bimodule resolution $\mathbb{P}\xrightarrow{}A$, a derivation or a Hochschild $1$-cocycle $\eta:A\xrightarrow{}A$ has associated to it a chain map $\tilde{\eta}_{n}:\mathbb{P}_{n}\xrightarrow{}\mathbb{P}_{n}$ that is a derivation operator. Since the difference of two derivation operators is a module homomophism, can we find another derivation operator $\tilde{\tau}_n$ associated with $\eta$ such that $\psi_{\eta_n}$ is chain homotopic to $\tilde{\eta}_n - \tilde{\tau}_n$?
\end{quest}\ \\
\\
\textbf{Acknowledgment:} The author expresses profound appreciation to his advisor to Dr. Witherspoon for useful discussions and for reading through the earlier versions of the manuscript.



\newpage

\end{document}